\newcommand{\Format}{article}
\algrenewcommand{\Return}{\State\algorithmicreturn~}
\algnewcommand{\Not}[1]{\textbf{not}(#1)}
\algnewcommand{\And}{\textbf{and}}
\DeclareMathOperator*{\argmin}{arg\,min}
\newcommand{\dsum}[2]{\sum\limits_{#1}^{#2}}
\newcommand{\dprod}[2]{\prod\limits_{#1}^{#2}}
\newcommand{\pdiff}[2]{\dfrac{\partial{#1}}{\partial{#2}}}
\newcommand{\abs}[1]{\left\lvert{#1}\right\rvert}
\DeclarePairedDelimiter\norm{\lVert}{\rVert}
\def\ll{\llbracket}
\def\rr{\rrbracket}
\def\o{\mathcal{o}}
\def\N{\mathbb{N}}
\def\Z{\mathbb{Z}}
\def\R{\mathbb{R}}
\def\H{\mathbb{H}}
\def\D{\mathcal{D}}
\def\P{\mathcal{P}}
\def\Cache{\mathcal V}
\def\M{\mathcal M}
\def\Pr{\mathbb{P}}
\def\E{\mathbb{E}}
\def\1{\mathds{1}}
\newcommand{\Normale}[2]{\mathcal N\left(#1,#2\right)}
\def\Rinf{\overline{\R}}
\def\gps{\textsc{Gps}\xspace}
\def\mads{\textsc{Mads}\xspace}
\def\nomad{{\sf NOMAD}\xspace}
\def\robustmads{\textsc{Robust-Mads}\xspace}
\def\ego{\textsc{Ego}\xspace}
\def\dpmads{\textsc{DpMads}\xspace}
\def\mpmads{\textsc{MpMads}\xspace}
\def\dfo{\textsc{DFO}\xspace}
\def\bbo{\textsc{BBO}\xspace}
\newtheorem{definition}{Définition}[section]
\newtheorem{lemma}[definition]{Lemma}
\newtheorem{theorem}[definition]{Theorem}
\newtheorem{corollary}[definition]{Corollary}
\newcommand{\comment}[1]{}
\newcommand{\guillemets}[1]{\textquotedblleft#1\textquotedblright}
\algrenewcommand\textproc{\textit}
\newcommand{\KeyWords}{derivative-free, blackbox, stochastic, noisy, adaptive precision, tunable preci-
sion, direct-search, Monte-Carlo simulation}
\newcommand{\Titre}{Optimization of noisy blackboxes with adaptive precision}
\newcommand{\Sujet}{Optimization of noisy blackboxes with adaptive precision}
\newcommand{\Auteur}{Alarie, Stéphane, Audet, Charles, Bouchet, Pierre-Yves, Le Digabel, Sébastien}
\setlist[itemize]{noitemsep, topsep=-5pt}
\setlist[itemize]{parsep=0.5pt}
\title{
    {Optimization of noisy blackboxes with adaptive precision}
    \thanks{
    This work is supported by the NSERC CRD~RDCPJ~490744-15 grant and by an InnovÉÉ grant, both in collaboration with Hydro-Québec and Rio~Tinto.}
}
\author{
    \href{mailto:alarie.stephane@ireq.ca}{Stéphane Alarie}\thanks{
    Expertise -- Science des données et calcul haute performance, Institut de recherche d’Hydro-Québec, Varennes, Canada.
    }
    \and \href{mailto:Charles.Audet@gerad.ca}{Charles Audet}\thanks{
        {GERAD}
          and Département de mathématiques et génie industriel,
          Polytechnique Montréal,
          Montréal, Canada.
          }
    \and \href{mailto:pierre-yves.bouchet@polymtl.ca}{Pierre-Yves Bouchet$^\ddagger$}
    \and \href{mailto:sebastien.le.digabel@gerad.ca}{Sébastien Le Digabel$^\ddagger$}
}
\begin{document}
\maketitle

\noindent\textbf{Abstract:}

In derivative-free and blackbox optimization, the objective function is often evaluated through the execution of a computer program seen as a blackbox. 
It can be noisy, in the sense that its outputs are contaminated by random errors. 
Sometimes, the source of these errors is identified and controllable, in the sense that it is possible to reduce the standard deviation of the stochastic noise it generates. 
A common strategy to deal with such a situation is to monotonically diminish this standard deviation, to asymptotically make it converge to zero and ensure convergence of algorithms because the noise is dismantled. 
This work presents \mpmads, an algorithm which follows this approach. However, in practice a reduction of the standard deviation increases the computation time, and makes the optimization process long. 
Therefore, a second algorithm called \dpmads is introduced to explore another strategy, which does not force the standard deviation to monotonically diminish. 
Although these strategies are proved to be theoretically equivalents, tests on analytical problems and an industrial blackbox are presented to illustrate practical differences.

\noindent\textbf{Keywords:}
\KeyWords.

\section{Introduction}
\label{sec:Intro}

This work consider the unconstrained problem
\begin{eqnarray} \label{eq-problem}
    \underset{x\in D_f \subseteq \R^n}{\min} & f(x)
\end{eqnarray}
under the framework of \textit{blackbox optimization} (\bbo) and \textit{derivative-free optimization} (\dfo). In those frameworks is required almost no structure on the problem and $f$. It is frequently considered that $f$ is continuous on its domain $D_f\subset\R^n$ unknown \textit{a priori}, but its derivatives may not exist (in \bbo) or be impossible to evaluate (in \dfo). 
Usually, a \textit{blackbox} is a complex computer program, computationally intensive to run. 
This high level of complexity makes the derivatives nonexistent or difficult to estimate. 
Therefore, algorithms to minimise a blackbox problem do not rely on any gradient-based processes and use only models of the objective function or comparison of previously computed values of $f$ to decide the quality of a given point.

However, in some contexts the values of $f$ cannot be computed exactly. In the simulations performed by a program, it is possible that some stochasticity appears. 
For example, if the blackbox encodes a Monte-Carlo estimation of the value of interest, two executions of the program with the same input parameter $x$ may return different values. 
Then, the true value of $f(x)$ is unknown, as any attempt to compute it returns a value affected by some error. 
This source of errors makes any deterministic algorithm prone to failure, because it is assumed in their design that the computation of $f(x)$ is possible and accurate. 
When the source of stochasticity is known and its implementation in the program is intentional, it is sometimes possible to control its \textit{magnitude} through the standard deviation of the random error on the returned value. 
For example, when the program performs a Monte-Carlo estimation of a value, improving the number of Monte-Carlo draws used in the estimation statistically improves the quality of the returned estimate and reduces its standard deviation. 
This work refers to this situation as an \textit{adaptive precision program}. 
In this document, an \textit{adaptive precision blackbox} denotes
    a deterministic function $f$ which cannot be computed, 
    but may be approximated by an adaptive precision program. 
The computation time of an adaptive precision program depends on the magnitude of errors it ensures: 
    the lower the standard deviation is, the higher the computation time is. 
In Monte-Carlo estimations, the time grows as the inverse of the square of the standard deviation: the total computation time for $N$ Monte-Carlo draws is roughly $t \propto N$ while the standard deviation is of the form $\sigma \propto 1/\sqrt{N}$. One may consider that as a trade-off: at any execution of the program, any standard deviation can be guaranteed, but the cost can be prohibitive. 
Three paradigms tackle this added layer of complexity. Specific algorithms exist on each, most of these being extensions of existing deterministic strategies (overviews are given in~\cite{AuHa2017,CoScVibook}). 

The first possibility is to not control the magnitude of noise during the optimization, 
    but rather to decide it before the optimization starts. 
Under this strategy, any algorithm which deals with uncertainty can be used. 
Notably, algorithms designed for situations where the noise is not adaptive. 
However, it should be noted that deterministic algorithms have no guarantee to work, 
    even with this strategy. 
One can consider the \robustmads algorithm~\cite{AudIhaLedTrib2016} 
    which modifies the \mads algorithm~\cite{AuDe2006} 
    to create a smooth representation of the function, 
    knowing noisy estimates. 
\mads is also adapted as \textsc{Stoch-Mads} in~\cite{G-2019-30}, 
    an algorithm using probabilistic estimates to ensure convergence of a noisy problem where the noise variance is nor known neither adaptive. 
Various techniques from surface response design can also be used to dynamically generate a sequence of functions approximating $f$: 
    for example, the \textsc{Phoenics} solver~\cite{PHOENICS} 
    which uses Bayesian kernel density estimations, 
    or kriging, studied by Sasena~\cite{Sasena02}. 
A line search algorithm is developed by Paquette and Scheinberg~\cite{PaSc18}.
Also, some algorithms exploit trust-region principles, 
    like \textsc{Astro-Df} in~\cite{ASTRODF}.

The second possibility is to lead the optimization on a high magnitude of noise, 
    and reduces it monotonically during the optimization process. 
The algorithm from Polak and Wetter~\cite{PoWe06a} adapts the mechanics from the \gps
    algorithm~\cite{Torc97a} in the case where errors have a controllable upper bound.
Chen and Kelley~\cite{ChKe2016} propose a way to reduce the magnitude which can be used in direct-search algorithms when the adaptive precision program performs a Monte-Carlo estimation. 
They extend this strategy with the addition of a smoothing effect in~\cite{ChKeFeZa18}. 
A trust-region algorithm handling constraints and using Gaussian models, \textsc{SNowPaC}, is proposed in~\cite{snowpac}.
Rivier and Congedo proposes in~\cite{Rivier2019} a multi-objective framework. In~\cite{DOGS2019} is proposed a Delaunay triangulation in $\R^n$, refined jointly with the grow in precision.
Heuristics are also used, for example the modification of the Nelder-Mead algorithm~\cite{NeMe65a} proposed by Chang in~\cite{Ch2012}.

The last possibility is to adapt more frequently the magnitude of the noise, 
    reducing it when necessary and augmenting it when possible. 
Picheny \textit{et al.}~\cite{Picheny2012} propose an adaptation of the \ego algorithm~\cite{JoScWe97a} which uses adaptive precision programs with errors given by centred normal laws with controllable magnitude. 
This strategy is also used in multi-fidelity optimization, for example by Frandi and Papini in~\cite{FrPa2015} which uses a direct-search algorithm to a multi-fidelity context. 
Multi-fidelity optimization is also named \textit{simulation optimization} in some works, as in the review~\cite{amaran2017simulation}.

The present work introduces two modifications of the \mads algorithm~\cite{AuDe2006} called \mpmads (monotonic precision) and \dpmads (dynamic precision) to handle an adaptive precision blackbox problem with the last two paradigms.
The paper is divided as follows.
Section~\ref{section:NotationsAlgos} introduces the notations and summarises the pertinent elements from the \mads algorithm to introduce \dpmads and \mpmads.
The section then presents the two algorithmic variants and concludes by discussing practical implementation issues.
A convergence analysis is provided in Section~\ref{section:Proof}.
The main result provides necessary conditions that ensure that the algorithm produces, with probability one, a point at which the Clarke generalised directional derivatives are non-negative.
Computational experiments are performed in Section~\ref{section:NumericalAnalysis}
    on two analytical problems as well as on
    a real industrial problem.
The results demonstrates that \dpmads can considerably reduce the overall computational effort.

\section{Two precision-adapting algorithms}
\label{section:NotationsAlgos}

This section introduces notations 
 and proposes the monotone and dynamic precision algorithms \dpmads and \mpmads.

\subsection{Notations and mathematical optimization problem}
\label{section:Notations}

This work aims to solve the unconstrained optimization Problem~\eqref{eq-problem}. The domain $D_f$ on which $f$ is defined is unknown \textit{a priori}. 
As capturing this domain is part of the problem, one may consider the extreme barrier formulation of the objective proposed in~\cite{AuDe2006}. 
Denoting $\Rinf=\R\cup\{\pm\infty\}$, Problem~\eqref{eq-problem} can be reformulated with an extended definition of $f$:
\begin{equation}\setlength{\abovedisplayskip}{3pt}\label{eq:OptimProblemDet}
    \underset{x\in\R^n}{\min}\ f(x),~\text{with}~f: \left\{\begin{array}{ccl}
        \R^n & \longrightarrow & \Rinf \\
        x    & \longmapsto     & \left\{\begin{array}{ll}
                f(x)    & \textit{if } x \in D_f\\
                +\infty & \textit{otherwise.}\end{array}\right.
    \end{array}{}\right.
\setlength{\belowdisplayskip}{3pt}\end{equation}

In the context of this work, exact values of $f$ cannot be obtained. Only approximations may be computed, because of a \textit{noise} which alters the value $f(x)$ during the numerical evaluation. 
It is assumed that a \textit{stochastic noise} $\eta$ is added to the numerical value: 
    one observe $f(x)+\eta$ instead of computing $f(x)$. 
More precisely, while $\eta_\sigma\sim\Normale{0}{\sigma^2}$ 
    follows a centred normal law with standard deviation $\sigma$ independent from the point $x$ and the objective $f(x)$, 
    the noise is represented as $f_\sigma(x)=f(x)+\eta_\sigma$, 
    a random variable following $\Normale{f(x)}{\sigma^2}$. 
Any attempt to compute $f(x)$ returns $f_\sigma(x;\omega)$, 
    an observation $\omega$ of $f_\sigma(x)$. 
Therefore, two consecutive attempts to evaluate $f(x)$ may return two different outputs
    $f_\sigma(x;\omega_1) \neq f_\sigma(x;\omega_2)$. 
In addition to the normal behaviour of the noise,   
    the standard deviation $\sigma$ is assumed to be \textit{adaptive}. 
This signifies that its value is controllable by the one who attempts to evaluate $f(x)$. 
The value $\sigma$ may be modified for any evaluation, even if $x$ remains unchanged. 
Modifying the value $\sigma$ alters the random variable $f_\sigma(x)$ used to obtain approximation $f_\sigma(x;\omega)$ of $f(x)$. 
As a lower standard deviation $\sigma$ gives fewer probability that the estimate $f_\sigma(x;\omega)=f(x)+\eta_\sigma(\omega)$ highly differs from $f(x)$, the present work uses \textit{higher precision} as a synonym for \textit{lower standard deviation}. 
Furthermore, at an infinite precision (equivalently, a null standard deviation) $f_\sigma(x)$ converge to a Dirac measure centred on value $f(x)$:
    $\Pr\left(\underset{\sigma\rightarrow0}{\lim}~f_\sigma(x)=f(x)\right)=1$.

The precision required at any estimation of any $f(x)$ is determined by a so-called
    \textit{precision index} denoted $r\in\R$. 
This index is used to set the value of the standard deviation through the mapping function $\rho$: $\sigma=\rho(r)$. 
This function is assumed to be positive, upper bounded by a finite $\sigma_{max}$, and decreasing. 
Under these hypotheses, the index $r$ can be interpreted as the precision, 
    and $\rho(r)$ its associated standard deviation. 
Via $\rho$, a high value of precision index $r$ corresponds to a low standard deviation $\sigma$.

The \dpmads and \mpmads algorithms presented in Section~\ref{section:Algos} follow an iterative mechanic. 
The optimization process exploits, at any iteration denoted $k$, the computations performed by earlier iterations. The \textit{historic} at a point $x \in \R^n$, or \textit{cache at $x$}, is defined as follows: 
    $\Cache^k(x) = \left\{(f_\sigma(x;\omega),\sigma) \mid f_\sigma(x;\omega)~\text{has been observed during an iteration}~i \leq k\right\}$. 
Each element of this set is a couple where the first element is estimated value $f_\sigma(x;\omega)$ of $f(x)$ obtained from a noisy observation $f_\sigma(x)$, while the standard deviation $\sigma$ of the noise is the second element of the couple. 
If $x$ has not been evaluated up to iteration $k$, then $\Cache^k(x)=\emptyset$ is void. 
The set $\Cache^k(x)$ can be interpreted as the full historic at a given point $x$ at a given iteration $k$. One can also define the \textit{cache} at iteration $k$: 
$\Cache^k = \left\{\left(x,\Cache^k(x)\right) \mid x\in\R^n\right\}$, which links a point $x$ with the historic at $x$ up to iteration $k$.

These notations can be abusively extended in the following way: 
    $\Cache^k(x)$ is a function which returns the cache at $x$. 
This function searches in $\Cache^k$ for the couple formerly denoted $(x,\Cache^k(x))$, 
    and returns the second element. 
Then, $\Cache^k(x)$ returns the empty set $\emptyset$ if $x$ has never been evaluated up to iteration $k$, and the full historic at $x$ otherwise.

Given the cache $\Cache^k(x)$ at iteration $k$ and point $x$, it is possible to construct an estimation of the objective function on $x$. 
This estimate is denoted $f^k(x)$ and its statistical standard deviation $\sigma^k(x)$. 
Various techniques exist to create those, such as the maximum likelihood used in this work.
The value $f^k(x)$ is the best estimation of $f(x)$ that can be proposed up to iteration $k$. It is defined as the most plausible value of the mean of all the normal laws $f_\sigma(x)$ for which an observation $f_\sigma(x;\omega)$ is contained in $\Cache^k(x)$.
The value $f^k(x)$ is given by the formula below, where $(\lambda,\sigma)$ are elements of $\Cache^k(x)$, of the form $(f_\sigma(x;\omega),\sigma)$. 
Then, the statistical standard deviation of $f^k(x)$ can be computed as $\sigma^k(x)$, and $f^k(x)$ statistically follows a law $\Normale{f(x)}{\sigma^k(x)^2}$. 
No predicted or estimated values are proposed at non-evaluated points (points $x$ such that $\Cache^k(x)=\emptyset$). 
The estimates are:
\begin{equation}\label{eq:MaxLikelihood}\setlength{\abovedisplayskip}{3pt}\setlength{\belowdisplayskip}{3pt}
\left\{\begin{array}{rcll}
    f^k(x) & = & \dfrac{\sum_{(\lambda,\sigma)\in\Cache^k(x)}\lambda/\sigma^2}{\sum_{(\lambda,\sigma)\in\Cache^k(x)}1/\sigma^2} & \textit{if}~\Cache^k(x)\neq\emptyset,~+\infty~\textit{otherwise,} \\[2.5ex]
    \sigma^k(x) & = & \sqrt{\dfrac{1}{\sum_{(\lambda,\sigma)\in\Cache^k(x)}1/\sigma^2}} & \textit{if}~\Cache^k(x)\neq\emptyset,~+\infty~\textit{otherwise.}
\end{array}\right.
\end{equation}
These estimates are used to define the \textit{incumbent} at iteration $k$ as $x_*^k \in \underset{x \in \R^n}{\argmin} f^k(x)$.

The original Problem~\eqref{eq:OptimProblemDet} can be reformulated with no use of the values of $f(x)$ (values of the true objective function, which cannot be computed):
\begin{equation}\label{eq:OptimProblem}
    \underset{x\in\R^n}{\min}~\underset{\#\Cache^k(x)\rightarrow+\infty}{\lim}~f^k(x).
\end{equation}
Optima are unchanged, because of the almost-sure equality provided by strong law of large numbers which ensures that for all points $x$ estimated infinitely often as $k\rightarrow+\infty$, it is almost sure that the maximum likelihood $f^k(x)$ converges to $f(x)$:
\begin{equation*}
\comment{\setlength{\abovedisplayskip}{3pt}\setlength{\belowdisplayskip}{3pt}}
    \Pr\left(\underset{k\rightarrow+\infty}{\lim}~f^k(x)=f(x) \mid \#\Cache^k(x)\underset{k\rightarrow+\infty}{\longrightarrow}+\infty\right)=1,\quad\forall x\in\R^n.
\end{equation*}

\subsection{Adaptation of {\sc Mads} for adaptive precision control}
\label{section:MADSMechanics}

This work proposes two algorithms exploiting the spatial exploration mechanics given by the \mads algorithm to solve the noisy Problem~\eqref{eq:OptimProblem}. Recall that \mads uses of discretisations of the space $\R^n$ named \textit{meshes of size $\delta_m$ centred on $x$}. Such a mesh is defined as the set $\M_{\delta_m}(x)=\{x+\delta_mz \mid z \in \Z^n\}$.
At iteration $k$, \mads defines the mesh of size $\delta_m^k$ centred on its current incumbent $x_*^{k-1}$: $\M^k=\M_{\delta_m^k}(x_*^{k-1})=\{x_*^{k-1}+\delta_m^kz \mid z\in\Z^n\}$. 
From $\M^k$ is extracted a \textit{poll} $\P^k$, a set of \textit{candidates} points to be evaluated. The candidates remain close to the incumbent, on a \textit{frame} of size $\delta_p^k: \norm{x_*^{k-1}-x}_\infty\leq\delta_p^k,~\forall x\in\P^k$. 
Values of $\delta_m^k$ are chosen so that $\forall k,~\delta_m^k\leq\delta_p^k$, and therefore the rule $\delta_m^k=\min(\delta_p^k,(\delta_p^k)^2)$ from~\cite{AuDe2006} is chosen. 
A common strategy to efficiently explore the neighbourhood of the incumbent is to create a positive basis of $\R^n$ (denoted $\D^k$) such that $x_*^{k-1}+d\in\M^k~\textit{and}~\norm{d}_\infty\leq\delta_p^k,~\forall d\in\D^k$, as proposed in the \textsc{OrthoMads} algorithm in~\cite{AbAuDeLe09}.

As the precision is adaptive in this work, a cornerstone of both algorithms is the way they modify that precision. The precision $r$ can grow arbitrarily high to ensure a standard deviation $\sigma$ as low as desired. 
However, it impacts the computational cost per evaluation. Therefore, There is a trade-off to exploit in the best way: how to choose $r$ at any iteration and any point, to ensure the convergence within a computational effort as low as possible.

The first algorithm is \mpmads (\textit{Monotonic Precision \mads}), 
    a generalisation of the work of Polak and Wetter~\cite{PoWe06a}. 
Its behaviour gives a monotonic control of the precision: 
    the precision increases during the optimization process, 
    as slow as possible to avoid over-consumption of computational budget,
    but fast enough to ensure that the noise never impacts the convergence. 
At the end of any iteration $k$, \mpmads checks the quality of the estimates. 
If they are sufficiently accurate, 
    the precision index $r^{k+1}=r^k$ is left unchanged. 
Otherwise, it is increased ($r^{k+1}>r^k$) so that the standard deviation 
    $\sigma^{k+1}=\rho(r^{k+1})$ is sufficiently low to avoid the algorithm being misled by the noise.

The second algorithm is \dpmads (\textit{Dynamic Precision \mads}), 
    with a different control of the precision. 
\guillemets{Dynamic} means that the precision is not forced to increase. In \dpmads, $r^{k+1} < r^k$ is possible. 
This deteriorates the quality of future estimates, 
    but \dpmads ensures that the uncertainty coming from this reduction is sufficiently low to avoid biased convergence. 
At any iteration $k$, \dpmads attempts to set the precision $r^k$ at the lowest value possible which ensures that the standard deviation $\sigma^k=\rho(r^k)$ is sufficiently low to prevent biased decisions. 
In the algorithms, the $UpdateR(r^k,p^k)$ function modifies $r^k$, given $p^k$ an indicator of estimates quality. Detailed expressions of $UpdateR$ are given in Section~\ref{section:Implementation}.

The standard deviation $\sigma^k$ is used during iteration $k$ in the following way. 
\dpmads and \mpmads start the iteration $k$ from their current incumbent solution (a point which have the lowest estimate:
    $x_*^{k-1} \in \argmin\{f^{k-1}(x) \mid x\in\R^n\}$),
    and generate a \textit{poll} set $\P^k$ of candidates around $x_*^{k-1}$  
    (following the mechanics of \mads).  
Incumbent, as well as all the candidates, are evaluated so that 
    $\sigma^k(x)\leq\sigma^k,~\forall x\in\P^k\cup\{x_*^{k-1}\}$. 

The poll step on the set $\P^k$ is implemented in \dpmads and \mpmads via Algorithm~\ref{algo:poll}:

\begin{algorithm}
\caption{Poll step algorithm for non-deterministic objective}\label{algo:poll}
\begin{algorithmic}[1]
\Function{Poll}{$x_*^{k-1}$, $\delta_p^k$, $r^k$, $\Cache^{k-1}$}
\State $\sigma^k \gets \rho(r^k)$
\State $\delta_m^k \gets \min\left(\delta_p^k,\left(\delta_p^k\right)^2\right)$
\State $\M^k \gets \left\{x_*^{k-1}+\delta_m^k z \mid z\in\Z^n\right\}$
\State $\D^k \gets \textit{positive basis of $\R^n$ such that}~x_*^{k-1}+d\in \M^k~\textit{and}~\norm{d}_\infty\leq\delta_p^k,~\forall d\in\D^k$
\State $\mathcal{P}^k \gets \left\{x_*^{k-1}+d \mid d\in \D^k\right\}$
\For{$x\in \mathcal{P}^k\cup\left\{x_*^{k-1}\right\} \textit{such that } \sigma^{k-1}(x)>\sigma^k$}
    \State Compute some $(\sigma_1,\sigma_2,\dots,\sigma_u)$ so that $\sigma^k(x)=\left(\frac{1}{\sigma^{k-1}(x)^2}+\sum_{i=1}^u\frac{1}{\sigma_i^2}\right)^{-1/2} \leq \sigma^k$
    \State Generate $\left(f_{\sigma_i}(x)\right)_{1 \leq i \leq u}$, observe $\left(f_{\sigma_i}(x;\omega_i)\right)_{1 \leq i \leq u}$, update $\Cache^{k-1}(x)$ as $\Cache_{updated}^{k-1}(x)$
\EndFor
\State $\Cache^k \gets \left\{(x,\Cache_{updated}^{k-1}(x)) \mid x\in\R^n\right\}$
\State Choose $x_c^k \in \argmin\left\{f^k(x) \mid x\in \mathcal{P}^k\right\}$
\State $S^k \gets \left\{\begin{array}{l}
    Success~\textit{if}~f^k(x_c^k) < f^k(x_*^{k-1}) \\
    Barrier~\textit{if}~P^k \cap D_f = \emptyset \\
    Failure~\textit{otherwise}
\end{array}\right.$
\Return $x_c^k, S^k, \Cache^k$
\EndFunction
\end{algorithmic}
\end{algorithm}

Algorithm~\ref{algo:poll} implements the poll step using the mechanics from \mads, and returns:
\begin{itemize}
    \item $x_c^k$, the best candidate found during the search,
    \item $S^k$, an indicator of the quality of this candidate: $S^k=Success$ if it appears better than the incumbent, $S^k=Barrier$ if no point of $\P^k$ belongs to the feasible domain $D_f$, and $S^k=Failure$ if no feasible candidate have an estimate lower than $f^k(x_*^{k-1})$,
    \item $\Cache^k$, the former cache updated with the evaluations performed during the poll.
\end{itemize}

Observe that Line $8$ indicates to compute some values $(\sigma_i)_{1 \leq i \leq u}$ so that the standard deviation $\sigma^k(x)=\left(\frac{1}{\sigma^{k-1}(x)^2}+\sum_{i=1}^{u}\frac{1}{\sigma_i^2}\right)^{-1/2}$ of estimate $f^k(x)$ satisfies $\sigma^k(x) \leq \sigma^k$.
This means that the $Poll$ checks the quality of the estimate $f^{k-1}(x)$ by comparing its current standard deviation $\sigma^{k-1}(x)$ to $\sigma^k$. 
If it is higher than $\sigma^k$, then the algorithm produces one or many standard deviations $(\sigma_i)_{1 \leq i \leq u}$ and observations $\left(f_{\sigma_i}(x;\omega_i)\right)_{1 \leq i \leq u}$ such that the new estimate $f^k(x)$ have a standard deviation satisfying $\sigma^k(x) \leq \sigma^k$. 
A simple strategy consists of the following: if $\sigma^{k-1}(x)\leq\sigma^k$, no new observation is performed, but if $\sigma^{k-1}(x)>\sigma^k$ an unique ($u=1$) standard deviation is produced at an high value ($\sigma$ such that $\sigma^k(x)=\sigma^k$, or $\sigma=\sigma_{max}$ if the former equation leads to a solution $\sigma$ higher than $\sigma_{max}$).

\dpmads and \mpmads also allow the optional \guillemets{search step} from \mads to be used at the beginning of iteration $k$, with the $Search(\Cache^{k-1},r^k)$ function. 
It improves the estimates of all or some points $x$ in $\M^k$ with an observation of $f_{\sigma_s}(x)$ for a given $\sigma_s$. To avoid this step being too costly, this standard deviation $\sigma_s$ can be set at a high value. 
In the following, it is set to $\sigma_s^k=\rho(r^k-r_s)$ for a given $r_s>0$. 
$Search$ creates $\Cache^{k-1}_{search}$, the cache $\Cache^{k-1}$ updated with the addition of the estimates computed by the function, then returns a minimiser $x_s^k$ of $f^{k-1}$ over $\Cache^{k-1}_{search}$. If this step is disabled, $\Cache^{k-1}_{search}=\Cache^{k-1}$ and $x_s^k=x_*^{k-1}$ are unchanged. Altrough it does not impact convergence, it is possible that $x_s^{k-1}$ differs from $x_*^{k-1}$.

\subsection[MADS algorithm with monotonic and dynamic precision control]{\mads algorithm with monotonic and dynamic precision control}
\label{section:Algos}

The comprehension of the behaviour of both algorithms is easier if one recall how to decide if estimates are sufficiently accurate. At the end of iteration $k$, \dpmads and \mpmads consider their incumbent $x_*^k$ as a point which have the lowest objective function value estimate among the set of evaluated points: $x_*^k \in \argmin\{f^k(x) \mid \Cache^k(x) \neq \emptyset\}$. 
However, because of the noise, it is uncertain that this incumbent also minimises $f$ over the evaluated points. 
This uncertainty is quantified as follows. One can compare the best candidate $x_c^k \in \argmin\{f^k(x) \mid x\in\P^k\}$ to $x_s^k$ by computing the statistical \textit{p-value} $p^k$ of the hypothesis \guillemets{$f(x_c^k)<f(x_s^k)$}, knowing $\Cache^k$. 
If $p^k$ is close to $1$, then \guillemets{$f(x_c^k)<f(x_s^k)$} is highly plausible and the estimates $f^k(\cdot)$ are considered sufficiently accurate. 
If $p^k$ is close to $0$, the estimates are considered accurate because the opposite hypothesis \guillemets{$f(x_c^k)\geq f(x_s^k)$} is highly plausible. 
Then, the estimates are considered inaccurate if $p^k$ is too close to $0.5$. 
Figure~\ref{fig:Comparison} illustrates the hypothesis \guillemets{$f(x_2)<f(x_1)$} on three scenarios.
The values $p^k$ are computed analytically, using the cumulative distribution function of the $\Normale{0}{1}$ law denoted $\Phi$.

\begin{figure}[!ht]
\centering
\includegraphics[width=0.9\linewidth]{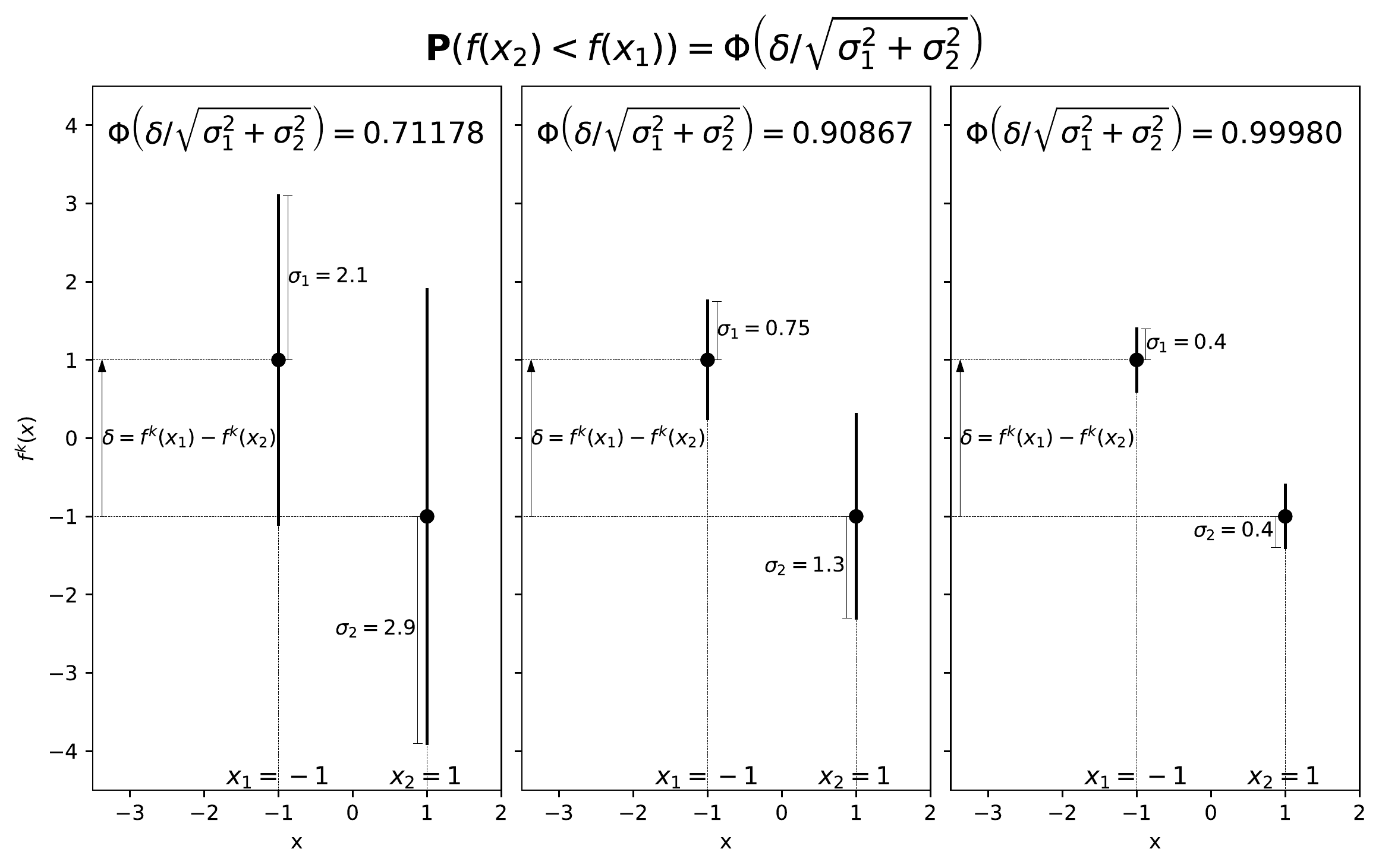}
\caption{Comparison of two estimates in three situations. The dots represent the estimated objective function values and the vertical lines the standard deviations.}
\label{fig:Comparison}
\end{figure}

\mpmads is designed to avoid the situations where an assumption is not highly plausible. 
It improves its precision index $r^k$ if $p^k$ or $1-p^k$ lies in the interval $[\beta_\ell,\beta_u]=[0.03\%,99.7\%]$. 
\dpmads is more tolerant, as it also considers the computational cost required to reach low standard deviations. 
It improves the precision if $p^k$ or $1-p^k$ is in $[\beta_\ell,\beta_u]=[15\%,85\%]$ but can reduce it if plausibility becomes too high (considering that plausibility of, say, $99.7\%$ is more than strictly required to avoid biased convergence, because $90\%$ plausibility is almost as viable and is cheaper to reach).
With these precision ranges, \mpmads only accepts the hypothesis \guillemets{$f(x_2)<f(x_1)$} on the rightmost scenario of Figure~\ref{fig:Comparison}, while \dpmads accepts it on the two rightmost ones.
These thresholds are chosen in concordance with the computation of uncertainty performed by the p-value: the difference $\delta$ between two estimates $f^k(x)$ and $f^k(y)$ is seen as an observation of an equivalent normal law centred on $0$ and with standard deviation $\sigma=\sqrt{\sigma^k(x)^2+\sigma^k(y)^2}$, and $p^k$ is the probability to observe $x \leq \delta$ within this law. With the chosen thresholds, \mpmads accepts a new incumbent only when $\abs{\delta} \geq 3\sigma$, while \dpmads accepts it when $\abs{\delta} \geq \sigma$.

Let also $C_\textsc{Dp}$ and $C_\textsc{Mp}$ be two logical conditions about the precision index $r^k$:
\begin{equation}\label{eq:minimalconditions}
C_\textsc{Dp}:~\left\{p^k \in [\beta_\ell,\beta_u] \implies r^{k+1} > r^k\right\}
~\text{and}~
C_\textsc{Mp}:~\left\{\begin{array}{c}
    C_\textsc{Dp}~\textit{is satisfied} \\
    p^k \notin [\beta_\ell,\beta_u] \implies r^{k+1} = r^k
\end{array}\right\}. \end{equation}
The first one, $C_\textsc{Dp}$, is the minimal requirement to make the algorithms working. 
It forces the precision index $r^k$ to increase as soon as the indicator of the quality of the estimates ($p^k$) shows that the estimates are not precise enough. 
However, it does not dictate any behaviour in the other case $p^k \notin [\beta_\ell,\beta_u]$. Then, the precision could either increase, decrease or remain constant in this situation. 
The second condition, $C_\textsc{Mp}$, is more stringent. 
It also imposes to $r^k$ to strictly increase if $p^k \in [\beta_\ell,\beta_u]$ but in addition, it forces $r^k$ to remain constant in the other situation. 
Therefore, under the condition $C_\textsc{Mp}$ the precision grows monotonically during the optimization process, while it is not necessarily the case when only $C_\textsc{Dp}$ is satisfied.

The only difference between \mpmads and \dpmads relies on the $UpdateR(r^k,p^k)$ function which leads the evolution of the precision index $r$. 
At any iteration, in \dpmads the function $UpdateR$ only needs to satisfy $C_\textsc{Dp}$, 
    while in \mpmads it needs to satisfy $C_\textsc{Mp}$.
Therefore, \mpmads is a specific variant of \dpmads.
With both algorithms the precision $r^k$ is forced to increase as soon as the p-value $p^k$ belongs to $[\beta_\ell,\beta_u]$. 
When $p^k$ lies outside this range, \mpmads forces $r^{k+1}$ to remain unchanged (equal to $r^k$), while in \dpmads the precision can also be increased, or even reduced.
In other words, in \mpmads the precision index remains constant until it is uncertain that the candidate $x_c^k$ is better than $x_s^k$ or not. 
In \dpmads, regardless of the certainty of this comparison, the precision index can either increase or decrease. 
It increases, not necessarily by $r^{k+1}=r^k+1$, if $p^k$ is too close to $0.5$,
 and decreases if it is too far from $0.5$.
Also, default values of $\beta_\ell$ and $\beta_u$
 are not the same on both algorithms.
\mpmads uses restrictive thresholds ($0.03\%$ and $99.7\%$) while in \dpmads there is more flexibility
 ($15\%$ and $85\%$ as default values).

\dpmads and \mpmads are formulated under these notations and concepts in Algorithm~\ref{algo:dp/mp-mads}.

\begin{algorithm}[ht]
\caption{\dpmads and \mpmads structure}\label{algo:dp/mp-mads}
\begin{algorithmic}[1]
\Function{\mads}{$f$; $x_*^0$; $\rho$, $\beta_\ell,~\beta_u$}
\State $Initialise: r^0=0,\Cache^0=\emptyset,\delta_p^0=1,k=1$
\While{\Not{$\textit{Stopping criteria reached}$}}
    \State $x_{s}^k,\Cache_{search}^{k-1} \gets Search(\Cache^{k-1},r^k)$
    \State $x_c^k, S^k, \Cache^k \gets Poll\left(x_s^k, \delta_p^k, r^k, \Cache_{search}^{k-1}\right)$
    \State $p^k \gets \Call{PValue}{x_c^k,x_s^k,\Cache^k}$
    \If {$S^k = Success$}
        \State $\delta_p^{k+1} \gets \left\{\begin{array}{rl}
            2\delta_p^k & \textit{if}~p^k>\beta_u \\
             \delta_p^k  & \textit{otherwise}
        \end{array}\right.$
        \State $r^{k+1} \gets UpdateR(r^k,p^k),~\textit{satisfying}~C_\textsc{Mp}~\textit{for}~\mpmads,~\textit{or}~C_\textsc{Dp}~\textit{for}~\dpmads$
    \ElsIf {$S^k = Failure$}
        \State $\delta_p^{k+1} \gets \left\{\begin{array}{ll}
            \delta_p^k/2 & \textit{if}~p^k<\beta_\ell \\
            \delta_p^k   & \textit{otherwise}
        \end{array}\right.$
        \State $r^{k+1} \gets UpdateR(r^k,p^k),~\textit{satisfying}~C_\textsc{Mp}~\textit{for}~\mpmads,~\textit{or}~C_\textsc{Dp}~\textit{for}~\dpmads$
    \Else~($S^k=Barrier$)
        \State $\delta_p^k \gets \delta_p^k/2$
        \State $r^{k+1} \gets r^k$
    \EndIf
    \State $x_*^k \in \argmin_{y\in \P^k}(f^k(y))$
    \State $k \gets k+1$
\EndWhile
\Return $x_*^k$
\EndFunction
\end{algorithmic}
\end{algorithm}

\subsection{Practical implementations of the \dpmads and \mpmads algorithms}
\label{section:Implementation}

This section gives additional information necessary to implement the algorithms from Section~\ref{section:Algos}. 
As the adaptive precision program may be unable to propose an arbitrarily high standard deviation, 
    then one may define $\sigma_{max}=\underset{\sigma\geq0}{\sup}\{\sigma \mid f_\sigma(x)~\textit{can be observed}\}$.
One can also propose $\sigma_{min}>0$ if one does not want the algorithms to ask for arbitrarily costly computation. 
Then, $\rho$ has to satisfy $\underset{r\rightarrow-\infty}{\lim}\rho(r)=\sigma_{max}$ and $\underset{r\rightarrow+\infty}{\lim}\rho(r)=\sigma_{min}$. 
A parameter $\theta>0$ is proposed to control the decrease rate
    (it can be seen as the attenuation of the noise magnitude, given in decibel). 
Also, a reference index $r_0$ is defined, such that $\rho(r_0)=\frac{\sigma_{min}+\sigma_{max}}{2}$. 
Then, the $\rho$ function is given by
\begin{equation*}\label{eq:rho}
    \rho:\left\{\begin{array}{rcl}
        \R & \longrightarrow & [\sigma_{min};\sigma_{max}] \\
        r  & \longmapsto     & \left\{\begin{array}{ll}
            \sigma_{min}+\dfrac{\sigma_{max}-\sigma_{min}}{2}10^{-(r-r_0)\theta} & \textit{if}~r\geq r_0 \\
            \sigma_{min}+\dfrac{\sigma_{max}-\sigma_{min}}{2}\left(2-10^{(r-r_0)\theta}\right) & \textit{if}~r<r_0.
        \end{array}\right.
    \end{array}\right.
\end{equation*}
and is represented in Figure~\ref{fig:Rho}. 
Default values are $\sigma_{max}=1, \sigma_{min}=0, r_0=0$.

\begin{figure}[!ht]
\centering
\includegraphics[width=0.75\linewidth]{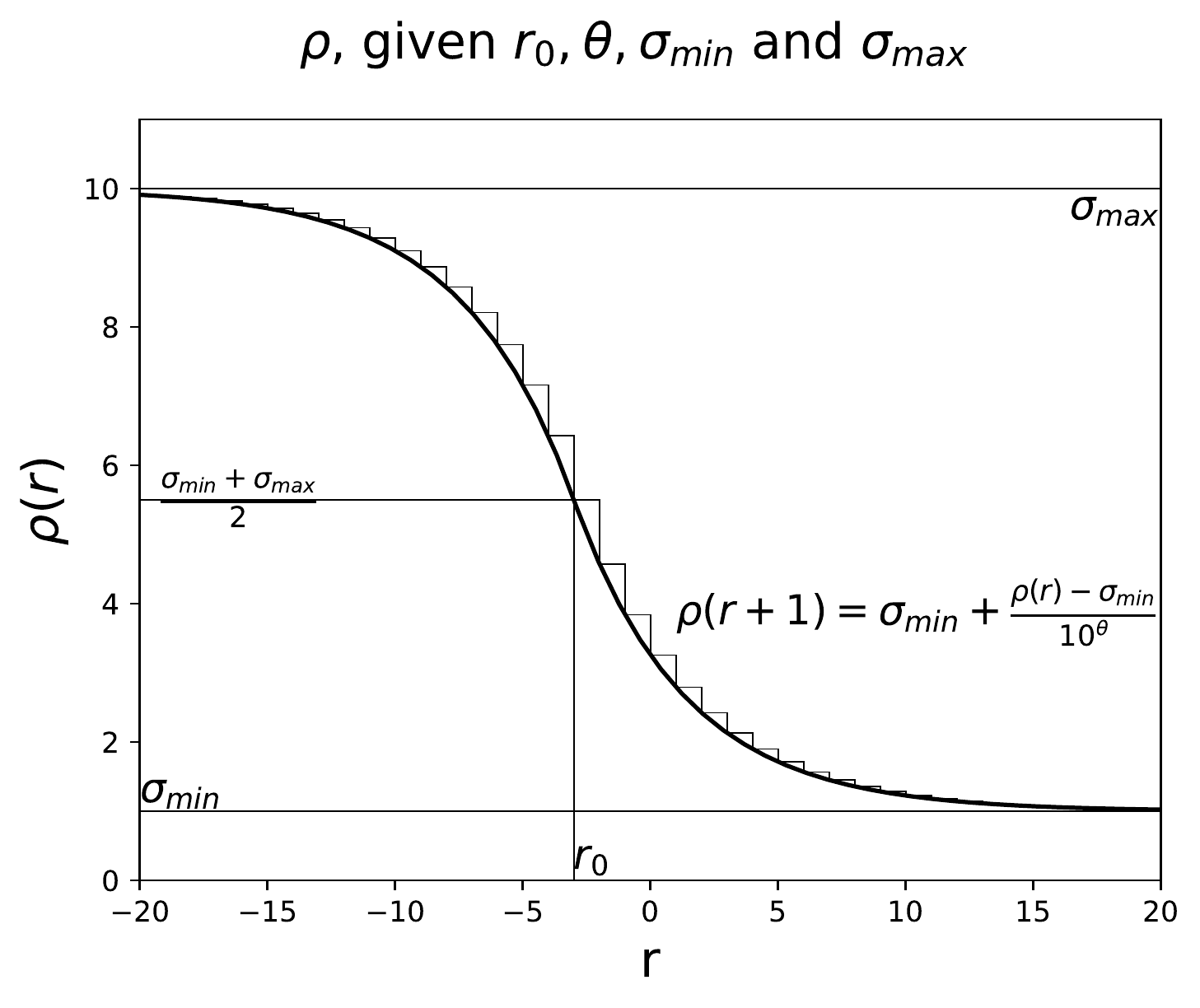}
\caption{The $\rho$ function with parameters $r_0=-3$, $\theta=0.1$, $\sigma_{min}=1$, $\sigma_{max}=10$.}
\label{fig:Rho}
\end{figure}

The optional function $Search$ is disabled for \mpmads. 
In \dpmads, it is called with the internal parameter $r_s=-5$, and re-estimates only the points which have an high enough plausibility to appear better than the incumbent. In other words, \dpmads's $Search$ function at iteration $k$ uses an observation with $\sigma_s=\rho(r^k-r_s)$ to update the estimates of all the points in the following set: $$ \left\{x\in\Cache^{k-1} \mid PValue\left(x,x_*^{k-1},\Cache^{k-1}\right) \geq \tau\right\}~\text{(with by default}~\tau=0.25\text{)}.$$

The last function needing to be described is the $UpdateR$ function. 
For \dpmads, the only theoretical requirement is $C_\textsc{Dp}: r^{k+1}=UpdateR(r^k,p^k)>r^k$ if $p^k \in [\beta_\ell,\beta_u]$. 
An acceptable function is represented in Figure~\ref{fig:UpdateR}, using various thresholds on $p$ to control the variations on $r$. For \mpmads, $r^{k+1} \neq r^k \iff p^k \in [\beta_\ell,\beta_u]$ and if so, $r^{k+1} > r^k$ is mandatory. In Figure~\ref{fig:UpdateR}, the proposed function computes $r^{k+1}=r^k+1$ as soon as $p^k \in [\beta_\ell,\beta_u]$, regardless of its value. However, some thresholds could be proposed to allow a non-unitary increasing of the precision index.
\begin{figure}[!ht]
\centering\includegraphics[width=0.8\linewidth]{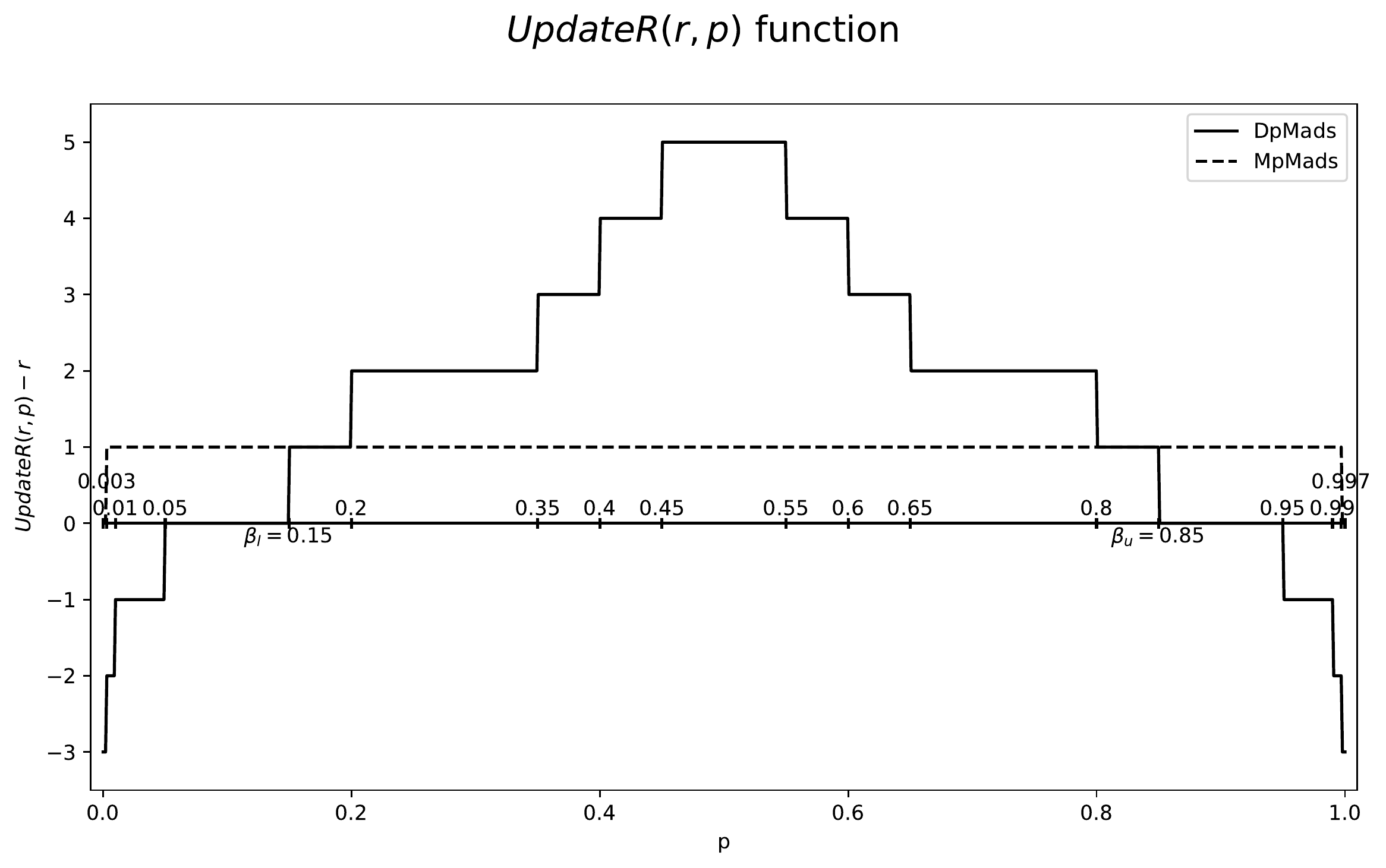}
\caption{Default $UpdateR$ function, given $\beta_\ell$, $\beta_u$ and some thresholds.}
\label{fig:UpdateR}
\end{figure}

Modifying these practical parameters requires precaution. 
Some values of the $\sigma_{min}$ parameter may lead the algorithms to fail it the $Search$ function is disabled. Due to the algorithmic mechanics, an estimate $f^k(x)$ satisfying $\sigma^k(x)<\sigma^k$ is not re-evaluated by the $Poll$ step.
If the $Search$ is enabled, it will perform such a re-evaluation. However, if it is disabled, the $Poll$ step cannot reduce the standard deviation $\sigma^k(\cdot)$ lower than $\sigma_{min}$, making the convergence impossible to achieve.
Then, $\sigma_{min}$ have to be forced to $0$ if the $Search$ is disabled.
This remark is especially important for \mpmads, as the $Search$ is disabled by default. Also, the parameter $\beta_\ell$ has to be strictly positive: $0 < \beta_\ell \leq 0.5$.

\section{Convergence analysis}
\label{section:Proof}

This section studies the convergence of both algorithms over the
Problem~\eqref{eq:OptimProblem}, under an additional assumption that $f$ is defined everywhere: $D_f=\R^n$, and has all its level sets bounded. 
It is also assumed that $\sigma_{min}=0$ and that the sequence of precision indexes $(r^k)_k$ satisfies the condition $C_\textsc{Dp}$ from~\eqref{eq:minimalconditions}.
The main idea of the proof is that in the absence of a stopping criteria, 
    the algorithms generates a sequence of estimated optima $x_*^k$, 
    with an accumulation point denoted $x_*$. 
This point almost surely satisfies local conditions based on the Clarke derivatives (defined in~\cite{Clar83a}) of the true objective function $f$.

\subsection{Technical lemmas}

This section gives some useful technical results.
The first one defines an optimization problem to provide an upper bound for a quantity which appears in the proof of Theorem~\ref{theorem:UpperBoundLevelSet}.
\begin{lemma}[Maximum of a sum of products of variables]\label{lemma:ProblemSumProd}
Let $n\geq2$ be an integer and $M$ the set of $n\times n$ matrices with all entries in the interval $[0,1]$. Denote $\Phi_{i,j} \in M$ such a matrix (this notation is chosen in concordance with the proof of Theorem~\ref{theorem:UpperBoundLevelSet}, on which some cumulative distribution of the centred reduced normal law are calculated). The problem
\begin{equation*}\begin{array}{rl}
    \underset{\Phi\in M}{\max} & \dsum{j=1}{n}\dprod{i=1}{n}\Phi_{i,j} \\
    \text{subject to} & \left\{\begin{array}{rcll}
                        \Phi_{i,i} &=& 1 & \forall i\in\ll1;n\rr \\
                        \Phi_{i,j} &=& 1-\Phi_{j,i} & \forall i,j \in \ll1;n\rr^2 \mid i>j.
                        \end{array}\right.
\end{array}\end{equation*}
has an optimal objective value equal to $1$.
\end{lemma}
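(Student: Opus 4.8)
The plan is to exploit the multilinear structure of the objective. For each pair $i<j$ the entry $\Phi_{i,j}$ is a free variable ranging over $[0,1]$, while the constraints fix $\Phi_{i,i}=1$ and $\Phi_{j,i}=1-\Phi_{i,j}$; hence, under this identification, the feasible set is exactly the cube $[0,1]^{n(n-1)/2}$. I would first observe that the objective $\sum_{j=1}^n\prod_{i=1}^n\Phi_{i,j}$ is affine in each free variable $\Phi_{i,j}$: this variable occurs only in column $j$ (as the factor $\Phi_{i,j}$) and in column $i$ (as the factor $\Phi_{j,i}=1-\Phi_{i,j}$), and in each of these two column-products it appears to the first power only. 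A function that is affine in each coordinate attains its maximum over a box at one of its vertices, so it suffices to bound the objective when every $\Phi_{i,j}\in\{0,1\}$.

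At such a vertex the matrix encodes a \emph{tournament} on $n$ vertices, since exactly one of $\Phi_{i,j},\Phi_{j,i}$ equals $1$ whenever $i\neq j$. Then $\prod_{i=1}^n\Phi_{i,j}$ equals $1$ when every off-diagonal entry of column $j$ equals $1$, and $0$ otherwise, so the objective simply counts the columns whose off-diagonal entries are all equal to $1$. I would then argue, by a pigeonhole-type observation, that at most one such column can exist: if columns $j_1$ and $j_2$ were both all-ones, taking $i=j_2$ in the first and $i=j_1$ in the second would force $\Phi_{j_2,j_1}=1$ and $\Phi_{j_1,j_2}=1$, contradicting the constraint $\Phi_{j_1,j_2}+\Phi_{j_2,j_1}=1$. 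Hence the objective is at most $1$ at every vertex, and therefore at most $1$ on the whole feasible set.

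To see that the bound is attained I would exhibit an explicit feasible point, for instance the matrix with $\Phi_{i,n}=1$ for all $i<n$ and $\Phi_{i,j}=\tfrac{1}{2}$ for every other off-diagonal pair. This respects all constraints and keeps every entry in $[0,1]$; column $n$ then contributes $\prod_{i=1}^n\Phi_{i,n}=1$, while each column $j<n$ contains the factor $\Phi_{n,j}=1-\Phi_{j,n}=0$ and so contributes $0$. The objective equals exactly $1$ at this point, and combined with the upper bound this yields an optimal value equal to $1$.

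The only delicate step is the reduction to vertices: one must verify carefully that each free variable enters the objective affinely, i.e.\ that $\Phi_{i,j}$ appears in no column other than $i$ and $j$ and is never multiplied by itself. Everything afterwards is elementary bookkeeping. I would mention, as a remark, that the quantity $\prod_{i\neq j}\Phi_{i,j}$ admits a probabilistic reading as $\prod_{i\neq j}\Pr(X_i<X_j)$ for independent continuous random variables, to be compared with the disjoint events $\{X_j=\max_i X_i\}$ whose probabilities sum to $1$; however, this interpretation only covers matrices arising from an actual family of random variables, not the arbitrary matrices of $M$ considered here, so I would favour the multilinearity argument as the complete proof.
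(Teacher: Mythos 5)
Your proof is correct and follows essentially the same route as the paper: eliminate the constraints to obtain a box in the free variables $\Phi_{i,j}$, $i<j$, use affineness of the objective in each coordinate (the paper phrases this via the partial derivatives $\pi_{j,i}-\pi_{i,j}$ being independent of $\Phi_{i,j}$) to reduce to vertices, and then argue that at most one column product can equal $1$ at a $0/1$ point. If anything, your write-up is slightly more complete, since you exhibit an explicit feasible point attaining the value $1$, a step the paper's proof leaves implicit.
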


\begin{proof}
Denote $E=\{(i,j) \in \N^2 \mid 1 \leq i < j \leq n\}$ and $f$ the objective function. 
The variables $\Phi_{i,j}$ with $(i,j)\notin E$ can be removed from the problem using the equality constraints, 
    leading to a bound-constrained reformulation:
$$ \underset{\Phi_{i,j}\in[0,1]~\forall(i,j)\in E}{\max}~f(\Phi)=\dsum{j=1}{n}\dprod{i<j}{}\Phi_{i,j}\dprod{i>j}{}1-\Phi_{j,i}. $$

Denote also $\Pi_j=\dprod{i<j}{}\Phi_{i,j}\dprod{i>j}{}1-\Phi_{j,i},~\forall j \in \ll1;n\rr$. The problem is therefore
$$ \underset{\Phi_{i,j}\in[0,1]~\forall(i,j)\in E}{\max}~\dsum{j=1}{n}\Pi_j. $$

First, assume all the variables are either $0$ or $1$. If any of the $\Pi_j=1$ (say, $\Pi_b=1$), all the other $\Pi_j, j \neq b$, are necessarily $0$:
$$\begin{array}{rcl}
\Pi_b=1 & \Rightarrow & \Phi_{i,b}=1~\forall i<b~\textit{and}~\Phi_{b,i}=0~\forall i>b \\
          & \Rightarrow & \Pi_j = 0~\forall j \neq b.
\end{array} $$
This proves that if all the variables are either $0$ or $1$, objective value is at most $1$.
\comment{Two solutions leads to this value: $\{\forall (i,j) \in E,~\Phi_{i,j}=0\}$ and $\{\forall (i,j) \in E,~\Phi_{i,j}=1\}$.}

Second, one can prove that a solution with some variables in $]0,1[$ cannot be optimal. Denoting $\pi_{i,j}=\dprod{k<j,k \neq i}{}\Phi_{k,j}\dprod{k>j,k \neq i}{}1-\Phi_{j,k},~\forall i,j \in \ll1;n\rr^2$, the partial derivatives of $f$ are:
$$ \pdiff{f}{\Phi_{i,j}} = \pi_{j,i}-\pi_{i,j}. $$
This does not depend on $\Phi_{i,j}$, so $f$ is necessarily not optimal until every variables $\Phi_{i,j}$ are set to one of their bounds ($0$ or $1$ depending on the sign of $\pi_{j,i}-\pi_{i,j}$).
\end{proof}

The two following Lemma~\ref{lemma:ConsistentMinimiser} and~\ref{lemma:ConsistentEstimates} ensure that the estimated values $f^k$ of the function $f$, over a finite set, respect the partial order defined by $f$:
\begin{lemma}[Localisation of a minimum is consistent on finite sets]\label{lemma:ConsistentMinimiser}
Let $E$ be a set with a finite number of elements. 
Let $f: E\rightarrow\R$ be a function and $\forall x\in E,f^i(x)$ the maximum likelihood value of $f(x)$ constructed from 
 the set $\Cache^i(x)$, under the assumption that it contains $i$ elements. 
If $x_*\in\underset{x\in E}{\argmin}~f$ is the unique minimiser of $f$ on $E$,
    then
\begin{equation*} \Pr\left(\exists I\in\N: \forall i\geq I,~x_*\in\underset{x\in E}{\argmin}~f^i(x)\right)=1. \end{equation*}
\end{lemma}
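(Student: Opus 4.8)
The plan is to reduce the claim to the almost-sure convergence of the maximum likelihood estimates already recorded in Section~\ref{section:Notations}, and then to exploit the finiteness of $E$ together with a uniform gap argument. The probabilistic content is entirely in the pointwise convergence of the estimators; the combinatorial part over $E$ is deterministic once that convergence is granted.

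First I would note that each $f^i(x)$ is the maximum likelihood value~\eqref{eq:MaxLikelihood} built from the $i$ observations stored in $\Cache^i(x)$, so that it follows the law $\Normale{f(x)}{\sigma^i(x)^2}$ with $\sigma^i(x)^2=\left(\sum_{(\lambda,\sigma)\in\Cache^i(x)}1/\sigma^2\right)^{-1}$. Since $\#\Cache^i(x)=i\to+\infty$, the strong law of large numbers quoted in Section~\ref{section:Notations} applies and yields, for each fixed $x\in E$, the almost-sure convergence $f^i(x)\to f(x)$ (the bound $\sigma\leq\sigma_{max}$ on every individual observation also gives $\sum 1/\sigma^2\geq i/\sigma_{max}^2$, hence $\sigma^i(x)\to0$, consistently with this limit).

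Next, because $E$ is finite, the event $A=\{f^i(x)\to f(x)\text{ for every }x\in E\}$ is a finite intersection of almost-sure events, so $\Pr(A)=1$. On $A$ the argument becomes deterministic. As $x_*$ is the \emph{unique} minimiser of $f$ and $E$ is finite, the gap $\delta=\min_{x\in E\setminus\{x_*\}}\left(f(x)-f(x_*)\right)$ is strictly positive. Convergence of the finitely many sequences $(f^i(x))_i$ then provides an index $I$ with $\abs{f^i(x)-f(x)}<\delta/2$ for all $i\geq I$ and all $x\in E$ simultaneously. For such $i$ and any $x\neq x_*$ this gives $f^i(x_*)<f(x_*)+\delta/2\leq f(x)-\delta/2<f^i(x)$, so $x_*$ is the unique minimiser of $f^i$ and in particular $x_*\in\argmin_{x\in E}f^i(x)$. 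Since this holds throughout $A$, the event described in the statement contains $A$ and therefore has probability one.

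I expect the only genuine subtlety to be the passage from the vanishing of the estimator variance ($\sigma^i(x)\to0$) to the almost-sure convergence actually needed here: the values $f^i(x)$ for successive $i$ are strongly dependent, since each reuses all earlier observations, so the a.s.\ statement does not follow from $\sigma^i(x)\to0$ alone. This is precisely the point delegated to the strong law quoted in Section~\ref{section:Notations} (formally, Kolmogorov's criterion for weighted sums of independent variables combined with Kronecker's lemma). Once it is granted at each point, the combination over the finite set $E$ and the gap argument above are routine.
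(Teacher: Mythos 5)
Your proof is correct and takes essentially the same approach as the paper's: both invoke the strong law of large numbers quoted in Section~\ref{section:Notations} to get pointwise almost-sure convergence $f^i(x)\to f(x)$ for each of the finitely many points of $E$, then exploit the strictly positive optimality gap of the unique minimiser to produce a common index beyond which $x_*$ minimises $f^i$. The only cosmetic difference is that you intersect the convergence events into a single probability-one event and then argue deterministically with a uniform gap $\delta/2$, whereas the paper works with per-point gaps $\mu_i/2$ and takes the maximum of almost-surely finite indices $K_1,\dots,K_e,K_\eta$.
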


\begin{proof}
Denoting $E=\{x_1,\dots,x_e,x_*\}$ (thus making $\#E=e+1$), one defines the optimality gaps $\mu_i=f(x_i)-f(x_*)>0,~\forall i \in \ll1;e\rr$.

The strong law of large numbers ensures the convergence of any estimate to the true value it approximates: $\forall x\in E,~\Pr\left(f^k(x)\underset{k\rightarrow+\infty}{\longrightarrow}f(x)\right)=1$. It follows that
\begin{equation*}\setlength{\abovedisplayskip}{3pt}\left\{\begin{array}{rl}
    \forall i\in \ll1;e\rr, & \Pr\left(\exists K_i: \forall k\geq K_i,~f^k(x_i)>f(x_i)-\mu_i/2\right)=1, \\
    \text{with }\eta=\underset{i\in \ll1;e\rr}{\min}\left\{\mu_i\right\}, & \Pr\left(\exists K_\eta: \forall k \geq K_\eta,~f^k(x_*)<f(x_*)+\eta/2\right)=1.
\end{array}\right.\setlength{\belowdisplayskip}{3pt}\end{equation*}
Then, the constant $K=\max\left\{K_1,\dots,K_e,K_\eta\right\}$ exists almost surely (as a maximum of a finite number of constants which all exists almost surely) and satisfies:
\begin{equation*}\setlength{\abovedisplayskip}{3pt}
    \forall i\in \ll1;e\rr,~\forall k\geq K,~f^k(x_i)>f(x_i)-\mu_i/2\geq f(x_*)+\eta/2>f^k(x_*)
\setlength{\belowdisplayskip}{3pt}\end{equation*}
which is equivalent to the result claimed by the Lemma:
\begin{equation*}\setlength{\abovedisplayskip}{3pt}
    \forall k\geq K,~x_*\in\underset{x\in E}{\argmin}f^k(x).
\setlength{\belowdisplayskip}{3pt}\end{equation*}
\end{proof}

Lemma~\ref{lemma:ConsistentMinimiser} can be extended if the images by the deterministic function $f$ are all different:
\begin{corollary}[Partial orders defined by the estimates $f^k(\cdot)$ are consistent on finite sets]\label{lemma:ConsistentEstimates}
With the notations from Lemma~\ref{lemma:ConsistentMinimiser}, assume the images $f(x)$ are all different (in the sense that $\forall (x,y) \in E^2 \mid y \neq x,~f(x) \neq f(y)$). The estimates $f^k$ eventually defines a coherent partial-order relation on $E$:
\begin{equation*} \Pr\left(\exists K : \forall k\geq K,~\forall x,y\in E^2,~f(x)<f(y) \iff f^k(x)<f^k(y)\right)=1. \end{equation*}
\end{corollary}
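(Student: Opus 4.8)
The plan is to mimic the proof of Lemma~\ref{lemma:ConsistentMinimiser}, but instead of controlling only the single family of gaps between the minimiser and the remaining points, I would control \emph{all} pairwise gaps on the finite set $E$ simultaneously. The extra hypothesis that the images $f(x)$ are pairwise distinct is exactly what turns the relevant minimal gap into a strictly positive quantity, which is what allows the estimates to eventually reproduce the strict order in both directions.

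First I would write $E=\{x_1,\dots,x_m\}$ and set $\gamma=\min\{|f(x)-f(y)| \mid x,y\in E,~x\neq y\}$. Because $E$ is finite and the images are all different, this minimum is attained over finitely many pairs and satisfies $\gamma>0$. I would then fix a uniform tolerance $\varepsilon$ with $0<\varepsilon\leq\gamma/2$. Next, exactly as in Lemma~\ref{lemma:ConsistentMinimiser}, I would invoke the strong law of large numbers, which guarantees $f^k(x)\to f(x)$ almost surely for every $x\in E$; hence for each $x\in E$ there almost surely exists $K_x$ such that $|f^k(x)-f(x)|<\varepsilon$ for all $k\geq K_x$. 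Since $E$ is finite, the event that all these $K_x$ exist is an intersection of finitely many almost-sure events and is therefore itself almost sure; on that event the constant $K=\max_{x\in E}K_x$ is finite.

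It then remains to check that this $K$ works. For any $k\geq K$ and any distinct $x,y\in E$ with $f(x)<f(y)$, one has $f(y)-f(x)\geq\gamma\geq 2\varepsilon$, while simultaneously $f^k(x)<f(x)+\varepsilon$ and $f^k(y)>f(y)-\varepsilon$; subtracting yields $f^k(y)-f^k(x)>f(y)-f(x)-2\varepsilon\geq 0$, so $f^k(x)<f^k(y)$. This proves one implication uniformly over all pairs. For the converse I would use a trichotomy argument: since the images are distinct, for $x\neq y$ exactly one of $f(x)<f(y)$ or $f(y)<f(x)$ holds, and the implication just established rules out the wrong one whenever $f^k(x)<f^k(y)$; the degenerate case $x=y$ is vacuous. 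Hence on the almost-sure event both directions of the equivalence $f(x)<f(y)\iff f^k(x)<f^k(y)$ hold for all $k\geq K$, which is the claim.

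The main obstacle here is bookkeeping rather than depth: one must handle all $\binom{m}{2}$ pairs at once and, crucially, preserve \emph{strictness} even at the tightest gap. The single uniform tolerance $\varepsilon\leq\gamma/2$ is what makes this work, since the true separation $f(y)-f(x)$ then dominates the combined estimation error $2\varepsilon$ at every pair; the only delicate point is the tie case $f(y)-f(x)=\gamma=2\varepsilon$, where strictness is still recovered because the law of large numbers furnishes \emph{strict} estimation bounds, leaving $f^k(y)-f^k(x)>0$ rather than merely $\geq 0$.
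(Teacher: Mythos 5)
Your proof is correct, but it takes a genuinely different route from the paper's. The paper proves this corollary by repeatedly invoking Lemma~\ref{lemma:ConsistentMinimiser} as a black box: it orders $E$ so that $f(x_1)\leq f(x_2)\leq\dots\leq f(x_{e+1})$, applies the lemma to the nested sets $E$, $E\setminus\{x_1\}$, $E\setminus\{x_1,x_2\}$, \dots, obtains constants $K_0,K_1,\dots,K_e$, and takes $K=\sum_{i}K_i$, arguing that beyond $K$ each $x_{i+1}$ minimises $f^k$ over the corresponding tail set, which assembles into the full order. You instead re-derive the result directly from the strong law of large numbers: you introduce the minimal pairwise gap $\gamma>0$ (this is where the distinct-images hypothesis enters, just as it enters implicitly in the paper's peeling argument), fix a uniform tolerance $\varepsilon\leq\gamma/2$, and obtain a single almost-surely finite $K$ after which every estimate is within $\varepsilon$ of its true value, so all $\binom{m}{2}$ strict comparisons are reproduced simultaneously; the converse implication then follows by trichotomy. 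The paper's approach is shorter on the page because the $\varepsilon$-bookkeeping is hidden inside the lemma, but its composition of the constants (``$K_0$ iterations make $x_1$ the minimiser, then $K_1$ makes $x_2$ minimise the rest, and so on'') is left informal; your version makes the quantitative control explicit and uniform, preserves strictness cleanly even at the tightest gap, and proves both directions of the equivalence in one stroke. Either argument is acceptable.
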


\begin{proof}
Let $E = \{x_1,\dots,x_{e+1}\}$ with the elements $x_i$ ordered as $f(x_1)\leq f(x_2)\leq\dots\leq f(x_{e+1})$. One can recursively apply Lemma~\ref{lemma:ConsistentMinimiser} to $E$, then $E\setminus\{x_1\}$, then $E\setminus\{x_1,x_2\}$, \dots, until $E\setminus\{x_1,x_2,\dots,x_e\}$. One obtains some constants denoted $K_0,K_1,\dots,K_e$. Then, $K=\dsum{i=0}{e}K_i$ satisfies the desired definition (because $K_0$ iterations makes $x_1$ to be the minimiser over $f^k$, then $K_1$ makes $x_2$ to minimise the remaining $f^k$, and so on).
\end{proof}

Notice that neither Lemma~\ref{lemma:ConsistentMinimiser} nor Corollary~\ref{lemma:ConsistentEstimates} ensures anything for points $x$ and $y$ such that $f(x)=f(y)$. The reason is when two points have the exact same image by $f$, the estimates $f^k(x)$ and $f^k(y)$ could satisfy either $f^k(x)<f^k(y)$ or $f^k(x)>f^k(y)$ for any iteration $k$. Estimates comparisons are therefore irrelevant when $f(x)=f(y)$.

It follows from Lemma~\ref{lemma:ConsistentMinimiser} that if two points $x$ and $y$ are evaluated infinitely often, the p-value of hypothesis \guillemets{$f(x)<f(y)$} knowing $\Cache^k$ will converge to $0$ (if $f(x)<f(y)$) or $1$ (if $f(x)>f(y)$).
\begin{lemma}[Behaviour of the p-value used to compare points]\label{lemma:PValueLimit}
Let $x$ and $y$ be two points satisfying $f(x)\neq f(y)$. 
Let $f^k(\cdot)$ be the maximum likelihood estimate of $f(\cdot)$ constructed with $k$ observations: 
    $\#\Cache^k(\cdot)=k$. 
Let $p^k$ be the p-value of the hypothesis \guillemets{$f(x)<f(y)$}, knowing $\Cache^k$. This p-value satisfies:
\begin{equation*}\left\{\begin{array}{rccl}
        f(x)<f(y) & \Rightarrow & p^k & \underset{k\rightarrow+\infty}{\longrightarrow}1, \\
        f(x)>f(y) & \Rightarrow & p^k & \underset{k\rightarrow+\infty}{\longrightarrow}0.
\end{array}\right.\end{equation*}
\end{lemma}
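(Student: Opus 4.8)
The plan is to write $p^k$ explicitly through the standard normal cumulative distribution function $\Phi$ and then to analyse the numerator and the denominator of its argument separately. Following the computation described just before Figure~\ref{fig:Comparison}, the p-value of the hypothesis \guillemets{$f(x)<f(y)$} is
\begin{equation*}
    p^k = \Phi\left(\frac{f^k(y)-f^k(x)}{\sqrt{\sigma^k(x)^2+\sigma^k(y)^2}}\right),
\end{equation*}
since the signed gap $\delta=f^k(y)-f^k(x)$ is treated as a single observation of a $\Normale{0}{\sigma^k(x)^2+\sigma^k(y)^2}$ law. As $\Phi$ is continuous with $\lim_{t\rightarrow+\infty}\Phi(t)=1$ and $\lim_{t\rightarrow-\infty}\Phi(t)=0$, it suffices to show that this argument tends to $+\infty$ (resp. $-\infty$) almost surely when $f(x)<f(y)$ (resp. $f(x)>f(y)$).

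First I would control the denominator, which is deterministic. Every observation stored in a cache comes with a standard deviation at most $\sigma_{max}$, so each term of the precision sum appearing in~\eqref{eq:MaxLikelihood} satisfies $1/\sigma_i^2\geq 1/\sigma_{max}^2$. With $\#\Cache^k(\cdot)=k$ observations this yields $\sum_i 1/\sigma_i^2\geq k/\sigma_{max}^2$, hence $\sigma^k(x)\leq\sigma_{max}/\sqrt{k}$ and likewise for $y$. Consequently $\sqrt{\sigma^k(x)^2+\sigma^k(y)^2}\rightarrow0$ as $k\rightarrow+\infty$. For the numerator, the strong law of large numbers already invoked in the proof of Lemma~\ref{lemma:ConsistentMinimiser} gives $\Pr\left(f^k(x)\rightarrow f(x)\right)=1$ and $\Pr\left(f^k(y)\rightarrow f(y)\right)=1$; intersecting these two almost-sure events produces $\Pr\left(\delta\rightarrow f(y)-f(x)\right)=1$, and the limit $f(y)-f(x)$ is a nonzero constant by the hypothesis $f(x)\neq f(y)$.

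It then remains to combine the two limits. On the almost-sure event where $\delta\rightarrow f(y)-f(x)\neq0$, for all large $k$ the gap $\delta$ keeps the sign of $f(y)-f(x)$ and stays bounded away from $0$ in absolute value, while the positive denominator vanishes; hence the ratio tends to $+\infty$ when $f(x)<f(y)$ and to $-\infty$ when $f(x)>f(y)$. Feeding this into the continuous function $\Phi$ gives $p^k\rightarrow1$ and $p^k\rightarrow0$ respectively, which is exactly the claim.

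The main obstacle I anticipate is not the sign analysis, which is immediate once both limits are in hand, but making the vanishing of $\sigma^k(\cdot)$ rigorous: it relies on the uniform upper bound $\sigma_i\leq\sigma_{max}$ on the individual observation noises, without which a sequence of increasingly imprecise observations could prevent $\sigma^k(\cdot)$ from converging to $0$ and could leave the argument of $\Phi$ indeterminate. Once that bound is used, the fact that the numerator's almost-sure limit is nonzero removes any $0/0$ indeterminacy in the ratio, and the conclusion follows at once.
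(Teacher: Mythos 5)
Your proof is correct and takes essentially the same route as the paper's: the paper obtains your numerator control from Lemma~\ref{lemma:ConsistentMinimiser} (itself an application of the strong law of large numbers, giving an eventual lower bound $f^k(y)-f^k(x)>\delta$ with $\delta$ half the true gap), bounds the denominator via $\sigma^k(\cdot)\leq\sigma_{max}/\sqrt{k}$ exactly as you do, and concludes by monotonicity and the limits of $\Phi$. The only cosmetic difference is the sign convention inside $\Phi$: yours matches the orientation of the statement, while the paper writes $p^k=\Phi\bigl((f^k(x)-f^k(y))/\sqrt{\sigma^k(x)^2+\sigma^k(y)^2}\bigr)$ and accordingly derives the limit $0$ under the assumption $f(y)>f(x)$.
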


\begin{proof}
Assume, without loss of generality, that $f(y)>f(x)$. let $\delta=(f(y)-f(x))/2$.
Lemma~\ref{lemma:ConsistentMinimiser} ensures that $\exists K \mid \forall k \geq K,~f^k(y)-f^k(x)>\delta$. Therefore, denoting $\Phi$ the cumulative distribution function of the law $\Normale{0}{1}$, $$\forall k \geq K,~p^k=\Phi\left(\dfrac{f^k(x)-f^k(y)}{\sqrt{\sigma^k(x)^2+\sigma^k(y)^2}}\right)<\Phi\left(\dfrac{-\delta}{\sqrt{\sigma^k(x)^2+\sigma^k(y)^2}}\right).$$
Also, denote $(\sigma_1,\dots,\sigma_k)$ the $k$-standard deviations of the $k$ estimates of $f(x)$. As any of those $\sigma_i$ is at most $\sigma_{max}=\underset{r\rightarrow-\infty}{\lim}\rho(r)$, one have: $$\sigma^k(x)=\sqrt{\dfrac{1}{\dsum{i=1}{k}1/\sigma_i^2}} \leq \sqrt{\dfrac{\sigma_{max}^2}{k}}.$$
With a similar argument, $\sigma^k(y) \leq \sqrt{\dfrac{\sigma_{max}^2}{k}}$. Therefore,
$$ p^k \leq \Phi\left(\dfrac{-\delta \sqrt{k}}{\sqrt{2}\sigma_{max}}\right) \underset{k\rightarrow+\infty}{\longrightarrow}0, $$ which concludes the proof.
\end{proof}

In the specific case of two different points $x$ and $y$ such that $f(x)=f(y)$, the behaviour of $p^k$ is different. Lemma~\ref{lemma:PValueLimitPtsEgaux} describes it:
\begin{lemma}[Behaviour of the p-value for points with identical image]\label{lemma:PValueLimitPtsEgaux}
With notations from Lemma~\ref{lemma:PValueLimit} and assumption $f(x)=f(y)$, $p^k$ statistically satisfies:
$$ \forall \varepsilon \in [0,0.5],~\Pr\left(p^k>0.5+\varepsilon\right)=\Pr\left(p^k<0.5-\varepsilon\right)=0.5-\varepsilon. $$
\end{lemma}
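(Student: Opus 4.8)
The plan is to recognise that, under the hypothesis $f(x)=f(y)$, the quantity fed into $\Phi$ by the p-value formula is exactly a standard normal random variable, so that $p^k$ is the image of a $\Normale{0}{1}$ variable through its own cumulative distribution function $\Phi$. The conclusion then follows from the probability integral transform, which states that such an image is uniformly distributed on $[0,1]$.

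First I would set $\mu=f(x)=f(y)$ and recall, as stated in Section~\ref{section:Notations}, that the maximum-likelihood estimate $f^k(x)$ follows $\Normale{\mu}{\sigma^k(x)^2}$ and, independently (the noise being independent of the evaluation point), that $f^k(y)$ follows $\Normale{\mu}{\sigma^k(y)^2}$. Since both means coincide, the difference $f^k(x)-f^k(y)$ is centred: it follows $\Normale{0}{\sigma^k(x)^2+\sigma^k(y)^2}$. Writing $s=\sqrt{\sigma^k(x)^2+\sigma^k(y)^2}$ and recalling the p-value expression used in Lemma~\ref{lemma:PValueLimit}, namely $p^k=\Phi\!\left(\frac{f^k(x)-f^k(y)}{s}\right)$, I would then set $Z=\frac{f^k(x)-f^k(y)}{s}$ and observe that $Z\sim\Normale{0}{1}$, so that $p^k=\Phi(Z)$.

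The core step is the probability integral transform. Because $\Phi$ is continuous and strictly increasing, for any $t\in[0,1]$ one has $\Pr(p^k\leq t)=\Pr(\Phi(Z)\leq t)=\Pr(Z\leq\Phi^{-1}(t))=\Phi(\Phi^{-1}(t))=t$; that is, $p^k$ is uniform on $[0,1]$. Applying this with $t=0.5-\varepsilon$ gives $\Pr(p^k<0.5-\varepsilon)=0.5-\varepsilon$ (strict and non-strict inequalities agreeing because $Z$, hence $p^k$, has no atom), while $\Pr(p^k>0.5+\varepsilon)=1-\Pr(p^k\leq 0.5+\varepsilon)=1-(0.5+\varepsilon)=0.5-\varepsilon$, which is the claimed identity.

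The only genuinely delicate point is to justify that $Z$ is \emph{exactly} standard normal, which requires $s$ to be a deterministic constant rather than a random quantity. This is where I would be careful: the variances $\sigma^k(x)^2$ and $\sigma^k(y)^2$ must be functions of the prescribed noise levels $\sigma$ of the observations stored in $\Cache^k(x)$ and $\Cache^k(y)$ alone, and not of the realised noisy values. Granting that the standard deviations used at each observation are fixed in advance (as the formula~\eqref{eq:MaxLikelihood} for $\sigma^k$ suggests, since it involves only the $\sigma$'s and not the $\lambda$'s), $s$ is indeed deterministic and the argument goes through verbatim; everything else is routine.
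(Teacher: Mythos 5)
Your proof is correct and follows essentially the same route as the paper: both standardise the difference $f^k(x)-f^k(y)$ of the two independent normal estimates into a $\Normale{0}{1}$ variable, note that $p^k$ is $\Phi$ applied to it, and invert $\Phi$ to compute the two tail probabilities (your invocation of the probability integral transform is just a named version of the paper's direct calculation $\Pr\left(\Normale{0}{1}>\Phi^{-1}(0.5+\varepsilon)\right)=0.5-\varepsilon$). Your closing remark that $\sqrt{\sigma^k(x)^2+\sigma^k(y)^2}$ is deterministic, since formula~\eqref{eq:MaxLikelihood} builds $\sigma^k(\cdot)$ from the prescribed noise levels only, is a point the paper leaves implicit and is a worthwhile clarification.
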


\begin{proof}
The estimates $f^k(\cdot)$ statistically follow normal laws $\Normale{f(\cdot)}{\sigma^k(\cdot)^2}$. As $f^k(x)$ and $f^k(y)$ are independent, their reduced difference follows $\dfrac{f^k(x)-f^k(y)}{\sqrt{\sigma^k(x)^2+\sigma^k(y)^2}} \sim \Normale{0}{1}$.

Also, recall that $p^k=\Phi\left(\dfrac{f^k(x)-f^k(y)}{\sqrt{\sigma^k(x)^2+\sigma^k(y)^2}}\right)$. As such, at any iteration $k$ its expected value is $0.5$ but it can reach any value in $[0,1]$. This ensures the result, because:
$$ \forall \varepsilon \in [0,0.5],~\left\{\begin{array}{rclcl}
    \Pr\left(p^k>0.5+\varepsilon\right) & = & \Pr\left(\Normale{0}{1}>\Phi^{-1}(0.5+\varepsilon)\right) & = & 0.5-\varepsilon, \\
    \Pr\left(p^k<0.5-\varepsilon\right) & = & \Pr\left(\Normale{0}{1}<\Phi^{-1}(0.5-\varepsilon)\right) & = & 0.5-\varepsilon.
\end{array}\right.
 $$
\end{proof}

A crucial requirement of the \mads algorithm is that all generated trial points lie on the mesh $\M^k$.
This assumption leads to the following lemma, which ensures that at any iteration $k$, the generated optimum $x_*^k$ and any point evaluated by the algorithms lie on a given mesh.
\begin{lemma}[All points evaluated up to iteration $k$ lie on a given mesh]\label{lemma:MinimalMesh}
Denote, for iteration $k$, the smallest mesh parameter encountered up to iteration $k$ by $\delta_{min}^k=\underset{i\leq k}{\min}\left\{\delta_p^i\right\}$. Then, the incumbent $x_*^k$, and any point evaluated $(\{x\mid\Cache^k(x)\neq\emptyset\})$ lie on a given mesh:
\begin{equation*} \forall x\in\R^n\mid\Cache^k(x)\neq\emptyset,~x\in\M_{\delta_{min}^k}\left(x_*^0\right). \end{equation*}
\end{lemma}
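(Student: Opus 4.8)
The plan is to argue by induction on the iteration counter $k$, exploiting the fact that every scale produced by the algorithm is a power of two, so that the whole family of meshes is nested. Concretely, I would prove the statement $H(k)$: \emph{every $x$ with $\Cache^k(x)\neq\emptyset$ (in particular the incumbent $x_*^k$) lies on $\M_{\delta_{min}^k}(x_*^0)$}, and conclude by induction from $H(0)$, which holds trivially since $\Cache^0=\emptyset$ and $x_*^0=x_*^0+\delta_{min}^0\cdot 0\in\M_{\delta_{min}^0}(x_*^0)$.

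Before the induction I would record two elementary structural facts. First, since $\delta_p^0=1$ and each update in Algorithm~\ref{algo:dp/mp-mads} replaces $\delta_p^k$ by one of $2\delta_p^k$, $\delta_p^k$ or $\delta_p^k/2$, every $\delta_p^i$ is an integer power of two; hence so is every mesh size $\delta_m^i=\min(\delta_p^i,(\delta_p^i)^2)$. Second, writing $\delta_{min}^k$ for the finest mesh size used up to iteration $k$, the ratios $\delta_m^i/\delta_{min}^k$ (for $i\le k$) and $\delta_{min}^{k-1}/\delta_{min}^k$ are powers of two that are at least $1$, hence positive integers. The latter gives the nesting $\M_{\delta_{min}^{k-1}}(x_*^0)\subseteq\M_{\delta_{min}^k}(x_*^0)$, since any $x_*^0+\delta_{min}^{k-1}z$ equals $x_*^0+\delta_{min}^k\big((\delta_{min}^{k-1}/\delta_{min}^k)z\big)$ with integer coefficient.

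For the inductive step, assume $H(k-1)$. By the nesting above, all points evaluated before iteration $k$ already lie on the finer mesh $\M_{\delta_{min}^k}(x_*^0)$. Every point newly evaluated at iteration $k$ — whether a (re-)estimated search point or a poll point $c+d$ with $d\in\D^k$ — lies on a mesh $\M_{\delta_m^k}(c)$ whose centre $c$ (equal to $x_*^{k-1}$ or to the search minimiser $x_s^k$) is a previously evaluated point; by construction in Algorithms~\ref{algo:poll} and~\ref{algo:dp/mp-mads} such a point has the form $c+\delta_m^k z$ with $z\in\Z^n$. Since $H(k-1)$ together with the nesting give $c=x_*^0+\delta_{min}^k w$ for some $w\in\Z^n$, and since $\delta_m^k/\delta_{min}^k\in\N$, we obtain $c+\delta_m^k z=x_*^0+\delta_{min}^k\big(w+(\delta_m^k/\delta_{min}^k)z\big)\in\M_{\delta_{min}^k}(x_*^0)$. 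As $x_*^k\in\argmin_{y\in\P^k}f^k(y)$ is one of the poll points, $H(k)$ follows.

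The only delicate point is the divisibility bookkeeping, that is, checking that each $\delta_m^i$ really is an integer multiple of the finest scale. This is where the rule $\delta_m=\min(\delta_p,\delta_p^2)$ must be handled with care: when $\delta_p^i<1$ the mesh is refined quadratically ($\delta_m^i=(\delta_p^i)^2$), so the smallest mesh size may be strictly finer than the smallest frame size, and it is this finest \emph{mesh} size that plays the role of $\delta_{min}^k$. Once every scale is known to be dyadic this verification is immediate and the induction closes.
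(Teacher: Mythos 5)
Your proof is correct and follows essentially the same route as the paper's own (much terser) argument: induction on $k$, using the fact that every step size is a power of two so that all meshes generated so far nest inside the finest mesh centred at $x_*^0$, and that each newly evaluated point lies on $\M^{k}$ centred at a previously evaluated point. Your closing caveat — that the finest \emph{mesh} size $\delta_m$, rather than the frame size $\delta_p$ appearing in the statement, must play the role of $\delta_{min}^k$ — is exactly the reading the paper itself adopts, both implicitly in its proof and explicitly when it invokes the lemma in Theorem~\ref{theorem:MeshSize} via $\delta_m^{min}$.
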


\begin{proof}
The fact that $x_*^0\in\M_{\delta_{min}^0}\left(x_*^0\right)$ holds trivially. Recursively, assume $x_*^k$ and all the points generated up to iteration $k$ are on the mesh. Remind also that $\delta_p^{k+1}$ is, by construction, on the form $2^{(s^k)}\delta_{min}^k$ with $s^k\in\N$. One can observe that iteration $k+1$ evaluates only $x_*^k$ (which is assumed to be on the mesh), the candidates coming from $\P^{k+1}$, and potentially other points generated by the $Search$ step (which all are, by construction, elements of the larger mesh $\M^{k+1} = \M_{\delta_m^{k+1}}(x_*^k) \subseteq \M_{\delta_{min}^{k+1}}\left(x_*^k\right)$).
\end{proof}

\subsection{Convergence theorems}

The next result shows that the sequence of incumbents remains bounded if all the level sets of $f$ are bounded.
\begin{theorem}[A bounded level set contains every visited points]\label{theorem:UpperBoundLevelSet}
Assume that all level sets of the objective function $f$ are bounded.
Let $x_*^0$ be the feasible starting point of an execution of one of the algorithms, and $\left(x_*^k\right)_k$ be the sequence of estimates generated during the optimization process. 
There exists, with probability one, a ball centred on $x_*^0$ which contains the entire sequence $\left(x_*^k\right)_k$:
\begin{equation*} \Pr\left(\exists R>0: \forall k\in\N,~x_*^k\in B_R\left(x_*^0\right)\right)=1. \end{equation*}
\end{theorem}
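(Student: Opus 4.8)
The plan is to convert the geometric claim into a statement about objective values and then control, iteration by iteration, the probability that the incumbent ever lands on a point of large true value. Since every level set of $f$ is bounded, it suffices to show that $\sup_k f(x_*^k) < \infty$ almost surely: if $f(x_*^k) \le C$ for all $k$, the whole sequence lies in the bounded set $\{x\in\R^n : f(x)\le C\}$, hence inside some ball $B_R(x_*^0)$. Equivalently I must prove $\Pr\!\left(\limsup_k f(x_*^k)=+\infty\right)=0$, using that any subsequence of incumbents escaping to infinity would force their objective values to diverge, again by boundedness of the level sets.

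Next I would exploit the algorithmic structure of a single iteration. The incumbent $x_*^k$ is the minimiser of the current maximum-likelihood estimates $f^k(\cdot)$ over the poll set (a positive basis, so at most $2n$ competitors around the previous incumbent), and by Lemma~\ref{lemma:MinimalMesh} all these points lie on a common mesh. Every \emph{genuine} improvement keeps the incumbent inside the initial bounded level set $\{f \le f(x_*^0)\}$, so the only dangerous moves are the noise-fooled ones, in which the estimate of a strictly worse point falls below that of a strictly better competitor. Writing each pairwise event through the normal CDF, with $\Phi_{i,j}=\Pr\!\big(f^k(x_j)<f^k(x_i)\big)=\Phi\big((f(x_i)-f(x_j))/\sqrt{\sigma^k(x_i)^2+\sigma^k(x_j)^2}\big)$, a sum-of-products expression $\sum_j\prod_i\Phi_{i,j}$ appears in the per-iteration estimate; these $\Phi_{i,j}$ satisfy $\Phi_{i,j}=1-\Phi_{j,i}$ and $\Phi_{i,i}=1$, so Lemma~\ref{lemma:ProblemSumProd} bounds the whole expression by $1$. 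Crucially, for a move toward a point whose true value exceeds a competitor's, the relevant factor is strictly below $1/2$ and tends to $0$ as the precision sharpens.

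I would then sum these per-iteration estimates over $k$ and invoke a Borel--Cantelli argument. Condition $C_\textsc{Dp}$ from~\eqref{eq:minimalconditions} forces the precision index up, hence the combined standard deviations $\sqrt{\sigma^k(x)^2+\sigma^k(y)^2}$ down, precisely when a comparison is statistically ambiguous ($p^k\in[\beta_\ell,\beta_u]$); in the complementary regime the comparison is reliable and, by the limiting behaviour established in Lemma~\ref{lemma:PValueLimit}, does not mislead. A case split along this dichotomy, combined with the normalisation from Lemma~\ref{lemma:ProblemSumProd} and the decay of the $\Phi$ factors, makes the expected number of iterations at which the incumbent moves to a strictly worse point finite. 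Borel--Cantelli then gives that almost surely only finitely many such moves occur, so $\sup_k f(x_*^k)<\infty$ and the ball $B_R(x_*^0)$ exists.

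The main obstacle I expect is the summability step under the dynamic, \emph{non-monotone} precision of \dpmads: unlike \mpmads, the standard deviations need not decrease at every iteration, so the decay of the $\Phi$ comparison factors cannot be asserted step by step but only on the ambiguous iterations singled out by $C_\textsc{Dp}$. Turning this conditional decay, together with the adaptive (history-dependent) choice of poll points and the dependence among the noisy estimates, into a genuinely summable bound is the delicate part; Lemma~\ref{lemma:ProblemSumProd} is the device that collapses the joint comparison over all poll candidates into a product of one-dimensional Gaussian comparisons whose aggregate mass is normalised to $1$, keeping the per-iteration estimate under control.
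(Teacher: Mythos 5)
Your setup (reduction to level sets, the pairwise Gaussian comparison probabilities $\Phi_{i,j}$, independence, and Lemma~\ref{lemma:ProblemSumProd} to collapse the sum-of-products to $1$) matches the ingredients of the paper's proof, but your Borel--Cantelli argument is indexed over the wrong set, and this is where the plan breaks. You propose to sum, over iterations $k$, the probability that the incumbent moves to a strictly worse point, and to conclude that only finitely many such moves occur. That sum is not finite, and the conclusion is not available: when two cached points have equal or nearly equal true values (which inevitably happens as the mesh refines and points cluster), the probability of a wrong-order comparison stays near $1/2$ at every iteration --- Lemma~\ref{lemma:PValueLimitPtsEgaux} makes this quantitative for equal values --- so the per-iteration ``bad move'' probabilities are bounded below and their sum diverges. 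The condition $C_\textsc{Dp}$ does not rescue this: it only forces the precision up \emph{after} an ambiguous comparison, and in \dpmads the precision may immediately come back down, so no step-by-step decay of the $\Phi$ factors can be asserted. Indeed the algorithm is designed to tolerate infinitely many small mistakes; what must be excluded is only the large ones.

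The paper's proof avoids this entirely by applying Borel--Cantelli over the level-set index $i$, not over iterations. Define $L_i=\{x \mid f(x)\leq f(x_*^0)+i\}$ and the event $A^i$ that the incumbent sequence ever leaves $L_i$. For the incumbent at any iteration $k$ to lie outside $L_i$, the estimate at some cached point $x$ with $f(x)\geq f(x_*^0)+i$ must fall below the estimate at $x_*^0$, which is always in the cache; since every estimate has standard deviation at most $\sigma_{max}$, this single comparison contributes a factor $\Phi\left(-i/(\sqrt{2}\sigma_{max})\right)$, and Lemma~\ref{lemma:ProblemSumProd} bounds the aggregation over all other cached points by $1$. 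The resulting bound $\Pr\left(x_*^k\notin L_i \mid \Cache^k\right)\leq\Phi\left(-i/(\sqrt{2}\sigma_{max})\right)$ is \emph{uniform in $k$ and in the precision schedule} --- no appeal to $C_\textsc{Dp}$ or Lemma~\ref{lemma:PValueLimit} is needed --- and is summable in $i$ (exponential decay), so almost surely only finitely many $A^i$ occur and the sequence stays in some ball $B_{d_\ell}(x_*^0)$. In short: do not try to bound the \emph{number} of worsening moves; bound, for each escape height $i$, the probability that an escape of that height \emph{ever} happens, and let the exponential decay in $i$ do the work.
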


\begin{proof}
The proof exploits the Borel-Cantelli Lemma, which provides:
\begin{quote}[Borel-Cantelli Lemma]\label{lemma:BorelCantelli}
Let $(A^i)_{i\in\N}$ be a sequence of events. There is a condition to determine if all but a finite number of these events are realised:
\begin{equation*}\setlength{\abovedisplayskip}{3pt}
    \dsum{i=0}{\infty}\Pr\left(A^i\right)<+\infty \implies \Pr\left(A^i~\text{infinitely often}\right)=0.
\setlength{\belowdisplayskip}{3pt}\end{equation*}
\end{quote}

Since $f$ has bounded level sets $L_i=\{x\in\R^n\mid f(x)\leq f(x_*^0)+i\}$, 
one can define bounds $d_i$  so that $L_i \subseteq B_{d_i}(x_*^0), \forall i\in\N$.
Applying the Borel-Cantelli lemma to the following sequence of events:
\begin{equation*}\setlength{\abovedisplayskip}{3pt}\begin{array}{rcl}
    \forall i\in\N,~A^i & = & \{(x_*^k)_k~\text{has an element outside of}~L_i\} \\
     & = & \{\kappa(i)<+\infty\},~\text{with $\kappa(i)$ the lowest index $k$ such that $x_*^k \notin L_i$},
\end{array}\setlength{\belowdisplayskip}{3pt}\end{equation*}
it is possible to prove that all but a finite number of these events happens, almost surely. Then, denoting $\ell$ the index of the last $A^i$ which is realised, the ball $B_{d_\ell}(x_*^0)$ contains the whole sequence of incumbents. To show that $\sum_{i=0}^{\infty}\Pr(A^i)$ converges, one may compute the probability that the $k^{th}$ incumbent is outside of $L_i$, knowing the cache $\Cache^k$:
\begin{equation*}\setlength{\abovedisplayskip}{3pt}\begin{array}{rcl}
\Pr\left(x_*^k \notin L_i \mid \Cache^k\right)
&=& \E_{\Cache^k}\left(\1_{L_i^c}\left(x_*^k\right)\right) \\
&=& \dsum{x\in\Cache^k}{}\1_{L_i^c}\left(x\right)\Pr\left(x_*^k=x \mid \Cache^k\right) \\
&=& \dsum{x\in\Cache^k}{}\1_{L_i^c}\left(x\right)\Pr\left(\forall y\in\Cache^k,~f^k(y)\geq f^k(x) \mid \Cache^k\right) \\
&=& \dsum{x\in\Cache^k}{}\1_{L_i^c}\left(x\right)\dprod{y\in\{x,x_*^0\}}{}\Pr\left(f^k(y) \geq f^k(x) \mid \Cache^k\right)\dprod{y\in\Cache^k\setminus\{x,x_*^0\}}{}\Pr\left(f^k(y)\geq f^k(x) \mid \Cache^k\right) \\
&=& \dsum{x\in\Cache^k}{}\1_{L_i^c}\left(x\right)\Phi\left(\dfrac{f(x_*^0)-f(x)}{\sqrt{\sigma^k(x_*^0)^2+\sigma^k(x)^2}}\right)\dprod{y\in\Cache^k\setminus \{x,x_*^0\}}{}\Phi\left(\dfrac{f(y)-f(x)}{\sqrt{\sigma^k(y)^2+\sigma^k(x)^2}}\right).
\end{array}\setlength{\belowdisplayskip}{3pt}\end{equation*}

Any point $x$ for which $\1_{L_i^c}$ is non-zero satisfies $f(x) \geq f(x_*^0)+i$, and all the evaluated points satisfies $\sigma^k(\cdot) \leq \sigma_{max}$. 
Then, denoting $\Cache^k_r=\Cache^k\setminus\{x_*^0\}$: 
\begin{equation*}\setlength{\abovedisplayskip}{3pt}\begin{array}{rcl}
\Pr\left(x_*^k \notin L_i \mid \Cache^k\right)
&\leq& \dsum{x\in\Cache_r^k}{}\1_{L_i^c}\left(x\right)\Phi\left(\dfrac{-i}{\sqrt{2\sigma_{max}^2}}\right)\dprod{y\in\Cache_r^k\setminus \{x\}}{}\Phi\left(\dfrac{f(y)-f(x)}{\sqrt{\sigma^k(y)^2+\sigma^k(x)^2}}\right).
\end{array}\setlength{\belowdisplayskip}{3pt}\end{equation*}

If all the points in $\Cache_r^k$ are in $L_i$, this expression is zero. 
Otherwise, let $v^k$ be the number of points belonging to the cache which are outside of $L_i$ and denote them by $\{x_1,\dots,x_{v^k}\} = \Cache_r^k\cap L_i^c$.

Denote $\Phi_{i,j}=\Phi\left(\dfrac{f(x_i)-f(x_j)}{\sqrt{\sigma^k(x_i)^2+\sigma^k(x_j)^2}}\right)$. 
The previous expression can be written as:
\begin{equation*}\setlength{\abovedisplayskip}{3pt}\begin{array}{rcl}
\Pr\left(x_*^k \notin L_i \mid \Cache^k\right)
&\leq& \Phi\left(\dfrac{-i}{\sqrt{2}\sigma_{max}}\right)\dsum{i=1}{v^k}\dprod{j \neq i}{}\Phi_{i,j}.
\end{array}\setlength{\belowdisplayskip}{3pt}\end{equation*}

As $\Phi_{i,j}=1-\Phi_{j,i}$, this last expression is of the form studied in
Lemma~\ref{lemma:ProblemSumProd}. Thanks to this Lemma, one can propose the following upper bound:
\begin{equation*}\setlength{\abovedisplayskip}{3pt}
\Pr\left(x_*^k \notin L_i \mid \Cache^k\right) \leq \Phi\left(\dfrac{-i}{\sqrt{2}\sigma_{max}}\right)\times1
\underset{i~\textit{large enough}}{\leq} \exp\left(-i/\sqrt{2}\sigma_{max}\right).
\setlength{\belowdisplayskip}{3pt}\end{equation*}

Remind that $\kappa(i)$ denotes the index of the first iteration $k$ for which $x_*^k \notin L_i$. Recall also that $(A^i~\text{infinitely often}) \Leftrightarrow \forall i,~\kappa(i)<+\infty$. Then, $(A^i~\text{infinitely often})$ if and only if there is a sequence ($\kappa(i))_i \in \N^\N$ of iteration indexes satisfying $\forall i,~x_*^{\kappa(i)} \notin L_i$, knowing the set of evaluated points $\Cache^{\kappa(i)}$.

However, the sum $\dsum{i=0}{+\infty}\Pr\left(x_*^{\kappa(i)}\notin L_i \mid \Cache^{\kappa(i)}\right) \leq \dsum{i=0}{+\infty}\exp\left(-i/\sqrt{2}\sigma_{max}\right)$ converge. 
Then, the Borel-Cantelli Lemma ensures that each $(A^i)$ happens infinitely often with probability zero, because there is probability zero that the sequence of $(\kappa(i))_i$ which generates $(A^i)$ exists.
\end{proof}

Theorem~\ref{theorem:UpperBoundLevelSet} ensures that any execution of the algorithm generates a bounded sequence of incumbents. 
Therefore, the sequence of mesh size parameters necessarily approaches zero, in the following sense:
\begin{theorem}[The mesh becomes refined infinitely often]\label{theorem:MeshSize}
    Under the assumption that $f$ has all its level sets bounded, the inferior limit of the sequence $(\delta_m^k)_k$ is almost surely zero:
    \begin{equation*}
        \Pr\left(\underset{k\rightarrow+\infty}{\liminf}~\delta_m^k=0\right)=1.
    \end{equation*}
\end{theorem}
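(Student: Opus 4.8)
The plan is to argue by contradiction on a realisation where the mesh fails to refine, and then to show that such a realisation has probability zero. Since $\delta_m^k=\min(\delta_p^k,(\delta_p^k)^2)$, one has $\liminf_k\delta_m^k=0$ if and only if $\liminf_k\delta_p^k=0$, so I would work throughout with the frame size $\delta_p^k$. Suppose, on an event of positive probability, that $\liminf_k\delta_p^k>0$. Every $\delta_p^k$ is an integer power of $2$, so a positive inferior limit forces a uniform lower bound $\delta_p^k\geq\delta^\ast>0$ for all $k$, hence $\delta_{min}^k\geq\delta^\ast$. By Lemma~\ref{lemma:MinimalMesh} all incumbents then lie on the fixed mesh $\M_{\delta^\ast}(x_*^0)$, and by Theorem~\ref{theorem:UpperBoundLevelSet} they also lie, almost surely, in a ball $B_R(x_*^0)$. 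The intersection of a bounded ball with a mesh of fixed positive resolution is finite, so on this event the incumbents $(x_*^k)_k$ take values in a finite set $G\subset\R^n$.

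The next step is to track the frame-size dynamics on this finite set. A doubling can occur only at a $Success$ iteration with $p^k>\beta_u$, while a halving occurs at every $Barrier$ iteration and at every $Failure$ iteration with $p^k<\beta_\ell$. Because $G$ is finite and the incumbent, together with its candidates, is re-estimated at each poll, the relevant points are evaluated infinitely often, so Corollary~\ref{lemma:ConsistentEstimates} applies: after some almost surely finite iteration the estimates $f^k$ order the visited points consistently with $f$. From that iteration on, the incumbent coincides with the true minimiser of $f$ over the visited points, which lies in $G$ and can decrease only finitely many times; hence the incumbent stabilises at a point $\bar x$ and no further $Success$ occurs. Consequently only finitely many doublings happen, and after the last one the exponent $s^k$ defined by $\delta_p^k=2^{s^k}$ is non-increasing.

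It then remains to produce infinitely many halvings, which will drive $s^k\to-\infty$ and contradict $\delta_p^k\geq\delta^\ast$. After the incumbent has stabilised at $\bar x$, every iteration is a $Failure$ or a $Barrier$; barriers halve directly, so I may assume infinitely many failures. For such a failure the best candidate $x_c^k$ satisfies $f(x_c^k)>f(\bar x)$, since $\bar x$ minimises $f$ over the visited points, so the hypothesis compared by the p-value is false. The point is to show $p^k<\beta_\ell$ infinitely often. If instead $p^k\in[\beta_\ell,\beta_u]$ from some iteration on, then condition $C_\textsc{Dp}$ forces $r^k\to+\infty$, hence $\sigma^k=\rho(r^k)\to\sigma_{min}=0$; every polled point is then estimated with vanishing standard deviation, and Lemma~\ref{lemma:PValueLimit} forces $p^k\to0$, eventually leaving the band $[\beta_\ell,\beta_u]$, a contradiction. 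Thus infinitely many failures satisfy $p^k<\beta_\ell$ and trigger halvings, so $\liminf_k\delta_p^k=0$, contradicting the assumption. The bad event therefore has probability zero, which yields $\Pr(\liminf_k\delta_m^k=0)=1$.

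The main obstacle I expect is precisely this last coupling between the precision rule and the p-value: one must rule out the algorithm stalling forever in the ambiguous band $[\beta_\ell,\beta_u]$, which would block the halvings, by combining the precision increase guaranteed by $C_\textsc{Dp}$ with the asymptotic decisiveness of Lemma~\ref{lemma:PValueLimit}, while keeping careful track of the fact that the set of polled candidates changes from iteration to iteration. A secondary technical point is the treatment of ties $f(x)=f(y)$, for which Lemma~\ref{lemma:PValueLimitPtsEgaux} shows the p-value does not settle; these must be excluded by a distinct-values or tie-breaking assumption so that Corollary~\ref{lemma:ConsistentEstimates} and Lemma~\ref{lemma:PValueLimit} can be invoked.
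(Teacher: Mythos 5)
Your argument reproduces, in essentially the same order, the paper's own proof for the case where the visited points have pairwise distinct objective values: bound the incumbents via Theorem~\ref{theorem:UpperBoundLevelSet}, trap all evaluated points in a finite grid via Lemma~\ref{lemma:MinimalMesh}, stabilise the incumbent via Lemma~\ref{lemma:ConsistentMinimiser} and Corollary~\ref{lemma:ConsistentEstimates}, and then rule out the algorithm stalling in the band $[\beta_\ell,\beta_u]$ by combining condition $C_\textsc{Dp}$ with Lemma~\ref{lemma:PValueLimit} to force infinitely many halvings. Up to that point your reasoning is sound, and it shares the same mild loosenesses as the paper (strict increase of $r^k$ is treated as implying $r^k\to+\infty$, and the consistency lemmas are applied to points assumed to be re-estimated often enough).

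The genuine gap is your last paragraph: you require ties $f(x)=f(y)$ to be excluded by a distinct-values or tie-breaking assumption, but the theorem carries no such hypothesis, and ties cannot be assumed away — the evaluated points are mesh points, and nothing prevents two of them (for instance by symmetry of $f$) from sharing the minimal value over the finite grid. In that case the pair of tied minimisers is exactly where your key step breaks: Lemma~\ref{lemma:PValueLimit} is inapplicable, and by Lemma~\ref{lemma:PValueLimitPtsEgaux} the p-value comparing them never settles, so you cannot conclude that $p^k$ eventually drops below $\beta_\ell$ and triggers halvings. The paper closes precisely this case with a different, probabilistic mechanism: when the minimiser over the infinitely-visited set is not unique, Lemma~\ref{lemma:PValueLimitPtsEgaux} gives, at each iteration, $\Pr(p^k<\beta_\ell)=\beta_\ell$ and $\Pr(p^k>\beta_u)=1-\beta_u$, so $(\delta_p^k)_k$ becomes a stochastic process (a random walk on the powers of two, with halving probability $\beta_\ell>0$ at every step) that almost surely reaches $\delta_p^{min}/2$, contradicting the assumed lower bound. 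Without this argument, or an equivalent one, your proof establishes the theorem only under an additional distinct-values assumption that the statement does not make.
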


\begin{proof}
Recall the connection between $\delta_m^k$ and $\delta_p^k$: $\delta_m^k=\min(\delta_p^k,(\delta_p^k)^2)$. Recall also, from Theorem~\ref{theorem:UpperBoundLevelSet}, that there almost surely exists a constant $R$ such that $\forall k \in \N,~x_*^k \in B_R(x_*^0)$. If $\delta_m^k$ becomes too large, iteration $k$ necessarily generates candidates outside of $B_R(x_*^0)$:
$$ \delta_m^k > 2R \implies \M^k \cap B_R(x_*^0) = \{x_*^k\} \implies \P^k \cap B_R(x_*^0) = \emptyset.$$
Therefore, iteration $k$ is not a success (otherwise, it would have generated a new incumbent $x_*^{k+1}$ outside of $B_R(x_*^0)$). 
However, $\delta_m^{k+1} > \delta_m^k$ is impossible if iteration $k$ is not a success. 
This ensures that the sequence $(\delta_m^k)_k$ has an upper bound $\delta_m^{sup}$.

With this argument, one can deduce there is almost surely a ball ($B_{3R}(x_*^0)$) which contains all the points evaluated by the algorithm during the entire optimization process: $\forall k,~\forall x,~\Cache^k(x) \neq \emptyset \Rightarrow x \in B_{3R}(x_*^0)$.

Now assume, for the sake of contradiction, that there is a strictly positive minimal mesh size parameter, of index $k_{min}$: $\exists k_{min} \mid \forall k\in\N,~\delta_m^k\geq\delta_m^{k_{min}}=\delta_m^{min}$.
Lemma~\ref{lemma:MinimalMesh} gives 
    $\forall k,~\forall x,~\Cache^k(x) \neq \emptyset \Rightarrow x \in \M_{\delta_m^{min}}(x_*^0)$: the thinnest mesh contains all the evaluated points.
One also have $\delta_p^{min}$ such that $\delta_m^{min}=\min(\delta_p^{min},(\delta_m^{min})^2)$.

One can deduce that the algorithm evaluate a finite number of points, because the set of all points which could be visited is $B_{3R}(x_*^0) \cap \M_{\delta_m^{min}}(x_*^0)$, which is finite. Among this set, there is a subset $E$ of points visited infinitely often.

Assume also the minimiser of $f$ over $E$ is unique. As $E$ is finite,
Lemma~\ref{lemma:ConsistentMinimiser} is applicable, so there is almost surely an index $\kappa$ for which $\forall k\geq\kappa$, the estimates $f^k(\cdot)$ and truth objective function $f(\cdot)$ share the same minimiser over $E$. Any iteration $k \geq \kappa$ is necessarily not a success, therefore two situations might happen. If $p^k \in [0,0.5]$ becomes close enough to $0$ ($p^k < \beta_\ell$), then $\delta_p^{k+1} = \delta_p^k/2$. Otherwise, precision index $r^{k+1}$ becomes higher than $r^k$. As the sequence $(\delta_p^k)_k$ is assumed to be bounded by $\delta_p^{min}>0$, the precision index $r^k$ grows arbitrarily high (and then, standard deviation $\sigma^k(\cdot)$ of $x_*^k$ and other elements from $E$ becomes arbitrarily close to $0$). However, due to Lemma~\ref{lemma:PValueLimit}, this implies $p^k \underset{k\rightarrow+\infty}{\longrightarrow}0$. So there is an index $\ell$ such that $p^\ell<\beta_\ell$. This implies a contradiction: $\delta_p^{\ell+1}<\delta_p^\ell=\delta_p^{min}$.

If the minimiser of $f$ over $E$ is not unique, there is at least two points $x_1$ and $x_2$ for which $f(x_1)=f(x_2)$. The situation is analgous if there is more than two minimisers.
Denote $\kappa$ an index for which $\forall k \geq \kappa,~\forall x \neq x_1, x_2,~f^k(x) > f(x_1)=f(x_2)$. Denote also $\delta_m^{max}$ the largest mesh size such that $\{x_1,x_2\} \subset \M_{\delta_m^{max}}(x_1)$, which is necessarily smaller than $\delta_m^{sup}$. At iteration $k\geq\kappa$, $x_*^k$ can either be $x_1$ or $x_2$, depending on the values of $f^k(x_1)$ and $f^k(x_2)$. If $\delta_m^k$ becomes larger than $\delta_m^{max}$, iteration $k$ is necessarily not a success because the minimiser over $\M^k$ becomes unique. For $\delta_m^k \in \{\delta_m^{min},2\delta_m^{min},\dots,\delta_m^{max}\}$, the p-value $p^k$ is driven by a behaviour described in Lemma~\ref{lemma:PValueLimitPtsEgaux}. Then, $\delta_p^{k+1}=2\delta_p^k$ with probability $1-\beta_u$, $\delta_p^{k+1}=\delta_p^k$ with probability $\beta_u-\beta_\ell$, and $\delta_p^{k+1}=\delta_p^k/2$ with probability $\beta_\ell$. The sequence $(\delta_p^k)_k$ can therefore be seen as a stochastic process with the following behaviour:
$$\left\{\begin{array}{rcl}
    \delta_m^k=\delta_m^{max} & \Rightarrow & \delta_p^{k+1}=
        \left\{\begin{array}{ll}
            \delta_p^k   & \textit{with probability}~1-\beta_\ell, \\
            \delta_p^k/2 & \textit{with probability}~\beta_\ell,
        \end{array}\right. \\
    \\
    \delta_m^{min}\leq\delta_m^k<\delta_m^{max} & \Rightarrow & \delta_p^{k+1}=
        \left\{\begin{array}{ll}
            2\delta_p^k  & \textit{with probability}~1-\beta_u, \\
            \delta_p^k   & \textit{with probability}~\beta_u-\beta_\ell, \\
            \delta_p^k/2 & \textit{with probability}~\beta_\ell.
        \end{array}\right. \\
\end{array}
\right.$$
As such, results about stochastic processes ensures that $(\delta_p^k)_k$ reaches $\delta_p^{min}/2$ at least once, with probability one. This contradicts the definition of $\delta_p^{min}$.
\end{proof}

Theorem~\ref{theorem:MeshSize} ensures that there exists an infinite number of iteration indexes $K \subseteq \N$ such that $\underset{k \in K}{\lim}\delta_m^k=0$. Considering that the sequence $(x_*^k)_{k\in K}$ have all its elements included in a compact (the ball defined by
Theorem~\ref{theorem:UpperBoundLevelSet}), there exists another infinite set $L \subseteq K$ such that $(x_*^k)_{k\in L}$ converges. Then, following the definition of a \textit{refining sequence} given in~\cite{AuDe2006}, $(x_*^k)_{k \in L}$ is a refining sequence, and its limit $x_*$ is a refined point: the sequence $(x_*^k)_{k\in L}$ converges to $x_*$ and $(\delta_m^k)_{k\in L}$ converges to $0$.

Also, for any unitary direction $d$ there is a subsequence of indexes such that $d$ is a refining direction (as defined in~\cite{AuDe09a}) for that subsequence: $\exists (d^k)_{k\in L}$ a sequence of directions such that $\frac{d^k}{\norm{d^k}_2} \rightarrow d$ and $\forall k,~x_*^k+d^k \in \M^k$ and $\norm{d^k}_\infty\leq\delta_p^k$.

The following property is satisfied by $x_*$:
\begin{theorem}[Limit point given by the optimization process satisfies optimality conditions]\label{theorem:ClarkeOptimality}
    Recall it is assumed in this section that $D_f=\R^n$ and $f$ has its level sets bounded. The optimization process almost surely generates at least one refined point $x_*$ which satisfies:
    \begin{equation*}
        \Pr\left(\forall d~\textit{unitary},~f^\circ(x_*;d) \geq 0\right)=1
    \end{equation*}
    where $f^\circ(x;d)$ is the Clarke-derivative of $f$ at point $x$ in the direction $d$,
    defined in~\cite{Clar83a}.
\end{theorem}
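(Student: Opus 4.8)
The plan is to run the classical \mads Clarke-stationarity argument along a refining subsequence, the extra difficulty being to convert the algorithm's noisy, estimate-based failure decisions into genuine inequalities on the true objective $f$. Throughout I assume, as is implicit when one invokes Clarke derivatives, that $f$ is locally Lipschitz near the refined point $x_*$. Recall that
\begin{equation*}
    f^\circ(x_*;d) = \limsup_{y\to x_*,\ t\downarrow 0}\ \frac{f(y+td)-f(y)}{t}.
\end{equation*}
By the discussion preceding the statement, for any fixed unitary $d$ there is an infinite index set $L$ along which $(x_*^k)_{k\in L}$ is a refining sequence converging to $x_*$, with $\delta_m^k\to 0$ (hence $\delta_p^k\to 0$, since $\delta_m^k=\min(\delta_p^k,(\delta_p^k)^2)$), and poll directions $(w_k)_{k\in L}$ satisfying $x_*^k+w_k\in\M^k$, $\norm{w_k}_\infty\leq\delta_p^k$ and $w_k/\norm{w_k}_2\to d$. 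Writing $t_k=\norm{w_k}_2\downarrow 0$ and using local Lipschitzness to replace the direction $w_k/t_k$ by $d$ up to an $o(1)$ error, I obtain
\begin{equation*}
    f^\circ(x_*;d)\ \geq\ \limsup_{k\in L}\ \frac{f(x_*^k+w_k)-f(x_*^k)}{\norm{w_k}_2},
\end{equation*}
so it suffices to prove that this last limsup is nonnegative.

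First I would pin down the algorithmic decision along $L$. Because $D_f=\R^n$ in this section, no iteration is a $Barrier$, so the mesh shrinks only at $Failure$ iterations with $p^k<\beta_\ell$; as $\liminf_k\delta_m^k=0$ forces infinitely many shrinks (Theorem~\ref{theorem:MeshSize}), $L$ may be taken to consist of iterations polling unsuccessfully around $x_*^k$. At such an iteration the best candidate does not improve the incumbent estimate, and since every poll point has estimate at least that of the best candidate, the particular poll point obeys the \emph{estimate} inequality $f^k(x_*^k+w_k)\geq f^k(x_*^k)$, together with the confidence bound $p^k<\beta_\ell$ (note that on any failure $p^k\leq 0.5<\beta_u$).

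The core step is to upgrade this estimate inequality to a statement about $f$ itself. Writing the estimation error as $f^k(x_*^k+w_k)-f^k(x_*^k)=\bigl(f(x_*^k+w_k)-f(x_*^k)\bigr)+\xi_k$, with $\xi_k$ centred Gaussian of variance $\sigma^k(x_*^k+w_k)^2+\sigma^k(x_*^k)^2\leq 2\rho(r^k)^2$, the event that the poll point is truly better by a fixed relative margin, $f(x_*^k+w_k)<f(x_*^k)-\varepsilon\norm{w_k}_2$, while the iteration is nevertheless a failure, is contained in $\{\xi_k>\varepsilon\norm{w_k}_2\}$, whose probability is at most the Gaussian tail $1-\Phi\bigl(\varepsilon\norm{w_k}_2/(\sqrt2\,\rho(r^k))\bigr)$. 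I would then invoke the Borel--Cantelli lemma, exactly as in the proof of Theorem~\ref{theorem:UpperBoundLevelSet}, to conclude that such false failures occur only finitely often almost surely, so that eventually $f(x_*^k+w_k)\geq f(x_*^k)-\varepsilon\norm{w_k}_2$ along $L$. The hard part, and the main obstacle, is establishing summability of these tail probabilities, which demands $\rho(r^k)=o(\norm{w_k}_2)=o(\delta_p^k)$ along $L$, i.e.\ that the noise standard deviation is eventually pushed well below the mesh scale. This is where the mechanism of Theorem~\ref{theorem:MeshSize} must be reused: a shrink ($p^k<\beta_\ell$) is only reached after the precision index has been driven up (each non-shrinking failure has $p^k\in[\beta_\ell,\beta_u]$ and hence increases $r^k$ under $C_\textsc{Dp}$) until the combined standard deviation is small relative to the current function differences; quantifying this coupling is what yields a summable bound, with Lemma~\ref{lemma:PValueLimit} supplying the qualitative lever that growing precision forces $p^k$ to its extreme value.

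Granting this, for every $\varepsilon>0$ I obtain almost surely $\displaystyle\limsup_{k\in L}\ \frac{f(x_*^k+w_k)-f(x_*^k)}{\norm{w_k}_2}\geq-\varepsilon$, so the limsup is nonnegative and $f^\circ(x_*;d)\geq 0$ almost surely. Finally, to pass from the per-direction result to the simultaneous statement $\Pr(\forall d,\ f^\circ(x_*;d)\geq 0)=1$, I would apply the above to a countable dense set of unitary directions, intersect the corresponding probability-one events, and extend to every $d$ using that $d\mapsto f^\circ(x_*;d)$ is (Lipschitz) continuous when $f$ is locally Lipschitz. This delivers the claimed almost-sure Clarke stationarity of the refined point $x_*$.
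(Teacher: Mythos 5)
Your skeleton is the same as the paper's: lower-bound $f^\circ(x_*;d)$ by difference quotients at poll points along a refining subsequence, use that the incumbent minimises the estimates so that $f^k(x_*^k+w_k)\geq f^k(x_*^k)$, and then control the probability that this estimate inequality coexists with a genuine decrease of $f$ via a Gaussian tail. The divergence --- and the genuine gap --- is in how you dispose of these \guillemets{false failure} events. You route the argument through Borel--Cantelli, which needs the tail probabilities $1-\Phi\bigl(\varepsilon\norm{w_k}_2/(\sqrt2\,\rho(r^k))\bigr)$ to be summable, hence $\rho(r^k)=o(\delta_p^k)$ along the subsequence. You flag this yourself as the main obstacle, but the coupling you hope to extract is not produced by the algorithm: $C_\textsc{Dp}$ forces $r^{k+1}>r^k$ only when $p^k\in[\beta_\ell,\beta_u]$. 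A mesh shrink happens precisely when $p^k<\beta_\ell$, i.e.\ when the estimated gap already exceeds roughly $\Phi^{-1}(1-\beta_\ell)$ times the combined standard deviation; for Lipschitz $f$ the true gaps at poll points are $O(\delta_p^k)$, so at shrink iterations the best one can infer is $\rho(r^k)=O(\delta_p^k)$, never $o(\delta_p^k)$ --- and in \dpmads the precision may even decrease between shrinks. The ratio $\norm{w_k}_2/\rho(r^k)$ thus stays bounded, your tails stay bounded below by a positive constant, and Borel--Cantelli is unavailable. No sharpening of Theorem~\ref{theorem:MeshSize} or Lemma~\ref{lemma:PValueLimit} repairs this, because the rate you need is simply not enforced by $C_\textsc{Dp}$.

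The fix (and the paper's actual argument) is to notice that summability is far more than you need. The negation of $\limsup_{k\in L}\bigl(f(x_*^k+w_k)-f(x_*^k)\bigr)/\norm{w_k}_2\geq0$ requires the false-failure event to occur for \emph{every} sufficiently large $k\in L$, not merely infinitely often. So it suffices that each event has probability bounded away from $1$: an infinite intersection of independent events of probability at most $q<1$ has probability zero. The paper gets such a bound with no rate at all --- conditionally on a true decrease by a margin $\mu$, the probability that the estimates nevertheless rank the incumbent below the poll point is at most $\Phi\bigl(-\mu/(\sqrt2\,\sigma_{max})\bigr)<1$, using only $\sigma^k(\cdot)\leq\sigma_{max}$; with your (correctly scaled) relative margin $\varepsilon\norm{w_k}_2$ the bound degrades towards $1/2$, which is still strictly below $1$ and still suffices. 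This is exactly why the theorem can hold under the weak condition $C_\textsc{Dp}$, with no control of noise versus mesh size. Your closing step --- a countable dense set of directions plus continuity of $d\mapsto f^\circ(x_*;d)$ under local Lipschitzness --- is sound, and is in fact more careful than the paper, which treats directions one at a time; but as written your proof does not go through unless the Borel--Cantelli step is replaced by the intersection argument above.
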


\begin{proof}
Recall the definition of $f^\circ$ and Result 3.9 from~\cite{AuDe2006}:
$$ \begin{array}{rccl}
    f^\circ(x,d) & = & \underset{y \rightarrow x,~ t \searrow 0}{\limsup} & \dfrac{f(y+td)-f(y)}{t} \\
                 & = & \underset{y \rightarrow x,~ t \searrow 0,~v \rightarrow d}{\limsup} & \dfrac{f(y+tv)-f(y)}{t}.
\end{array} $$
Denoting $d_u^k=d^k/\norm{d^k}_2$ and $\delta^k=\norm{d^k}_\infty$, one can deduce
$$ \begin{array}{rccl}
    f^\circ(x_*,d) & \geq & \underset{k \rightarrow +\infty}{\limsup} & \dfrac{f(x_*^k+\delta^kd_u^k)-f(x_*^k)}{\delta^k} \\
     & = & \underset{k\rightarrow+\infty}{\lim}~\underset{\ell\geq k}{\sup} & \dfrac{f(x_*^\ell+\delta^\ell d_u^\ell)-f(x_*^\ell)}{\delta^\ell}.
\end{array} $$
Then, the proof is complete if the (random) refining sequence $(x_*^k)_{k\in L}$ satisfies
$$ \Pr\left(\underset{k\rightarrow+\infty}{\lim}~\underset{\ell\geq k}{\sup}~~ \dfrac{f(x_*^\ell+\delta^\ell d_u^\ell)-f(x_*^\ell)}{\delta^\ell}~~\geq0\right)=1 $$
or, equivalently
$$ \Pr\left(\underset{k\rightarrow+\infty}{\lim}~\underset{\ell\geq k}{\sup}~~f(x_*^\ell+\delta^\ell d_u^\ell)-f(x_*^\ell)~~<0\right)=0. $$
With an equivalent formulation of the $\limsup$, one can rewrite this as
$$ \Pr\left(\exists\mu>0,\exists k\in\N \textit{ such that } \forall\ell\geq k,~f(x_*^\ell+\delta^\ell d_u^\ell)-f(x_*^\ell)\leq-\mu\right)=0. $$
Recall that $\forall \ell,~x_*^\ell+\delta^\ell d_u^\ell \in \Cache^\ell$ and $x_*^\ell \in \argmin\{f^\ell(x),~x\in\Cache^\ell\}$. As a consequence, $x_*^\ell$ satisfies $f(x_*^\ell+\delta^\ell d_u^\ell)-f(x_*^\ell)\leq-\mu$ if the following event is realised:
$$ A_\mu^\ell: f^\ell(x_*^\ell)<f^\ell(x_*^\ell+\delta^\ell d_u^\ell),\textit{ knowing } f(x_*^\ell+\delta^\ell d_u^\ell)\leq f(x_*^\ell)-\mu.$$
However, recall also that $f^\ell(\cdot)\sim\Normale{f(\cdot)}{\sigma^\ell(\cdot)^2}$ and the $f^\ell(\cdot)$ are independent. So:
$$ \Pr(A_\mu^\ell)\leq\Pr\left(X<0 \mid X\sim\Normale{\mu}{2\sigma_{max}^2}\right)=\Phi\left(\dfrac{-\mu}{\sqrt{2}\sigma_{max}}\right)<1. $$
Then, $\forall \mu>0$, there is almost surely all but a finite number of $A_\mu^\ell$ which are realised. So the result holds because there is a probability zero that a threshold $\mu>0$ satisfies the condition.
\end{proof}

Theorem \ref{theorem:ClarkeOptimality} concludes the convergence analysis. As a summary, assuming that:
\begin{itemize}
    \item $f$ is defined on $\R^n$ entirely (although this could be restricted to $x_*$ relying in the interior of $D_f$), and all the level sets of $f$ are bounded,
    \item the lower bound $\sigma_{min}$ of the precision function $\rho$ is zero,
    \item the algorithmic parameters $\beta_\ell$ and $\sigma_{max}$ satisfies $0 < \beta_\ell \leq 0.5$ and $\sigma_{max} < +\infty$,
    \item the evolution of the precision index $r^k$ satisfies the $C_\textsc{Dp}$ condition in~\eqref{eq:minimalconditions},
\end{itemize}
any algorithm following the structure introduced in Algorithm~\ref{algo:dp/mp-mads} with the minimal requirement ($C_\textsc{Dp}$) on the $UpdateR$ function generates a refined point $x_*$ satisfying the Clarke necessary optimality conditions.

\section{Computational study}
\label{section:NumericalAnalysis}

This section compares an implementation of the algorithms \dpmads and \mpmads with the \nomad~\cite{Le09b} implementation of the \robustmads algorithm.
A first set of comparisons are presented on two analytical problems.
Then, Section~\ref{section:VME}
    compares the algorithms on a real and computationally expensive 
    Monte-Carlo stochastic problem.
\dpmads and \mpmads are implemented in Python 3.6, while \robustmads implementation is taken from \nomad~3.9.1. The first two tests, performed on analytical functions, are executed on an Intel® Core™ i5-8250U CPU @ 1.60GHz with 8 cores and 8GB of RAM.

\subsection{Comparison of stochastic algorithms}
\label{section:Profiles}

Comparisons of algorithms are commonly performed using some tools like the \textit{performance profiles}~\cite{DoMo02}, \textit{data profiles}~\cite{MoWi2009} and \textit{accuracy profiles}~\cite{BeHaLu17}. 
However, these profiles are inappropriate in an adaptive precision context. 
Usually, their discriminating criteria is the number of blackbox calls. This metric is irrelevant in adaptive precision context, as a few calls with great precision can be more expensive than many calls with low precision. 
One needs to adapt these profiles to a relevant metric: the computational effort. This is considered through th Monte-Carlo draws consumption.
Two situations can arise in any adaptive precision problems. 
If the noise magnitude is chosen directly by a number of Monte-Carlo draws, 
    then the metric is trivially set to that number. 
Otherwise, one may create a fictive Monte-Carlo simulation which gives equivalent results for a given number of draws. 
This exploits a well-known approximation of Monte-Carlo estimates : denoting $\widetilde{\mathcal{A}_N}$ an estimate of $\mathcal{A}$ coming from $N$ Monte-Carlo draws, there exists a constant $C$ such that $\widetilde{\mathcal{A}_N} \sim \Normale{\mathcal{A}}{C/N}$. 
Then, considering $C=1$ for simplicity, an estimate obtained with a standard deviation $\sigma$ can be interpreted as the result of a Monte-Carlo simulation with $N=1/\sigma^2$ draws. 
Thus, $1/\sigma^2$ is a metric which can be interpreted as a Monte-Carlo draws consumption. Former profiles are modified with this new metric. The fundamental object they all use is the accuracy of a given algorithm within a given budget : $$f^N_{acc}=\dfrac{f\left(x^N\right)-f\left(x_*^0\right)}{f\left(x_*\right)-f\left(x_*^0\right)},$$ where $x_*^0$ is the initial point, $x^N$ the incumbent found with a budget of $N$ Monte-Carlo draws, $x_*$ is the optimum of $f$, and $f(\cdot)$ is the true value of points (if known) or its estimated value $f^k(\cdot)$ otherwise. It is therefore possible to determine the minimal budget required by an algorithm $a$ to solve a problem $p$ with a given tolerance $\tau$. The following formula gives this budget: $N_{a,p}\in\underset{N\in\R^+}{\argmin}\left\{f^N_{acc} \mid f^N_{acc}\geq1-\tau\right\}$ if such $N$ exists, and $+\infty$ otherwise. Although it is not used in the following graphs, an alternative formula giving the decimal logarithm of this budget could be considered: $N_{a,p}^{log}\in\underset{N\in\R^+}{\argmin}\left\{f^{10^N}_{acc} \mid f^{10^N}_{acc}\geq1-\tau\right\}$.

With these quantities, \textit{performance} and \textit{accuracy profiles} can be constructed in a way similar to their deterministic equivalent. However, \textit{data profiles} have to be more deeply modified. With the original profiles, the abscissa represents a number of calls divided the number of variables ($\frac{k}{n+1}$). As a positive basis of $\R^n$ requires at least $(n+1)$ vectors, $\frac{k}{n+1}$ represents the number of positive bases that could have been created within a budget of $k$ blackbox calls. As this is no longer relevant, the profiles are modified. Remind that to guarantees a given standard deviation $\sigma$ to the output of a blackbox call, $N=1/\sigma^2$ draws are required. The modified data profiles, defined for a reference standard deviation, represents in abscissa the quantity $\frac{k}{N}$, the number of estimates at guaranteed standard deviation $\sigma$ which could have been computed within a budget of $k$ draws.

\subsection{Analytical problems}
\label{section:AnalyticalProblems}

The first analytical problem discussed here, named Norm2, is easy to solve in the deterministic context. It is used to compare algorithms during the intensification close to an optimum.
\begin{equation*}
    \underset{(x,y)\in \R^2}{\min} \ \norm{(x,y)}_2
    \mbox{ with } \left(x_*^0,y_*^0\right)  =  \left(\pi^2,e^2\right). \\
\end{equation*}

Its noisy equivalent applies a noise $\Normale{0}{\sigma^2}$ at any computation of $\norm{(x,y)}_2$, with $\sigma$ decided by algorithms. The equivalent number of Monte-Carlo draws is $N=1/\sigma^2$. The stopping criteria is related to the frame parameter: $\delta_p<10^{-10}$. The noisy problem is:
\begin{equation}
    \underset{(x,y)\in \R^2}{\min} \ \left( \underset{\#\Cache^k(x,y)\rightarrow +\infty}{\lim}f^k\left(x,y\right) \right)
    \mbox{ with }
    \left\{\begin{array}{l}
                \left(x_*^0,y_*^0,\delta_p^{min}\right)  =  \left(\pi^2,e^2,10^{-10}\right), \\
                f^k\text{ and }\sigma^k \text{ defined as in Section~\ref{section:Notations}} \\
                \text{via }
                f_\sigma(\cdot) = \norm{\cdot}_2+\Normale{0}{\sigma^2}.
            \end{array}\right.
\end{equation}

Figure~\ref{fig:Norm2-Results-CVMC} shows convergence versus consumption. 
\robustmads is used with the standard deviation fixed to $\sigma=10^{-10}$. Preliminary tests shows that the algorithm struggle to reach an objective value lower than $\sigma/10$, which is lower than the chosen stopping criteria. 
One can observe that all \robustmads runs are close: it always reaches an objective function value of $10^{-10}$ after approximately $3.8\times10^{22}$ draws and cannot intensify more because $\sigma$ becomes high compared to the small variations of $f$ around the optimum. Meanwhile, \dpmads and \mpmads successfully goes closer to the optimum. However, \dpmads seems more reliable than \mpmads: at a given budget it reachs a lower objective. Also, all its runs converges within an equivalent budget ($10^{21}$ to $10^{23}$ draws) while some of \mpmads runs requires up to $10^{28}$ draws.

\begin{figure}[!ht]
    \centering
    \subfigure{\includegraphics[width=0.45\textwidth]{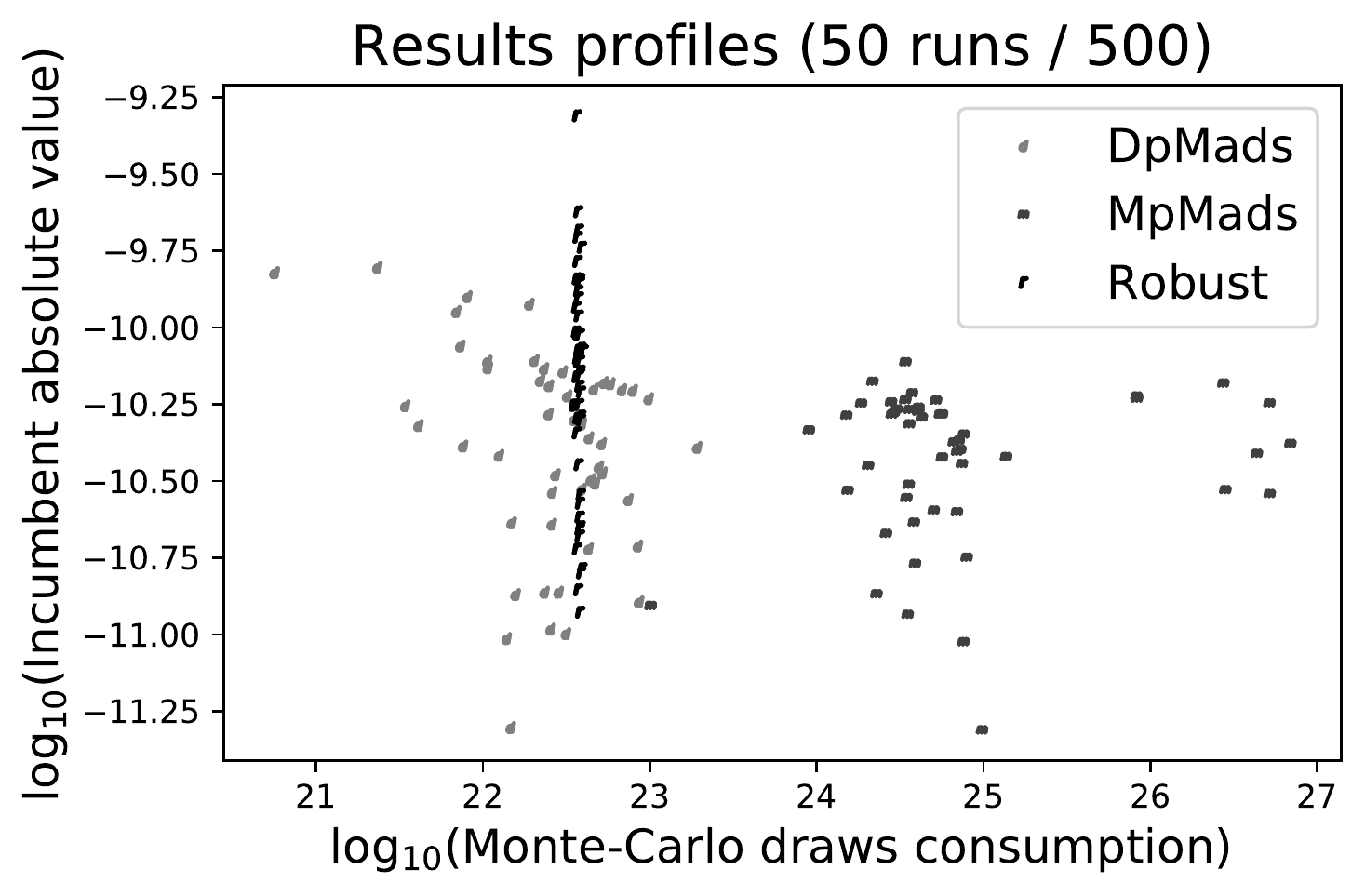}}
    \subfigure{\includegraphics[width=0.45\textwidth]{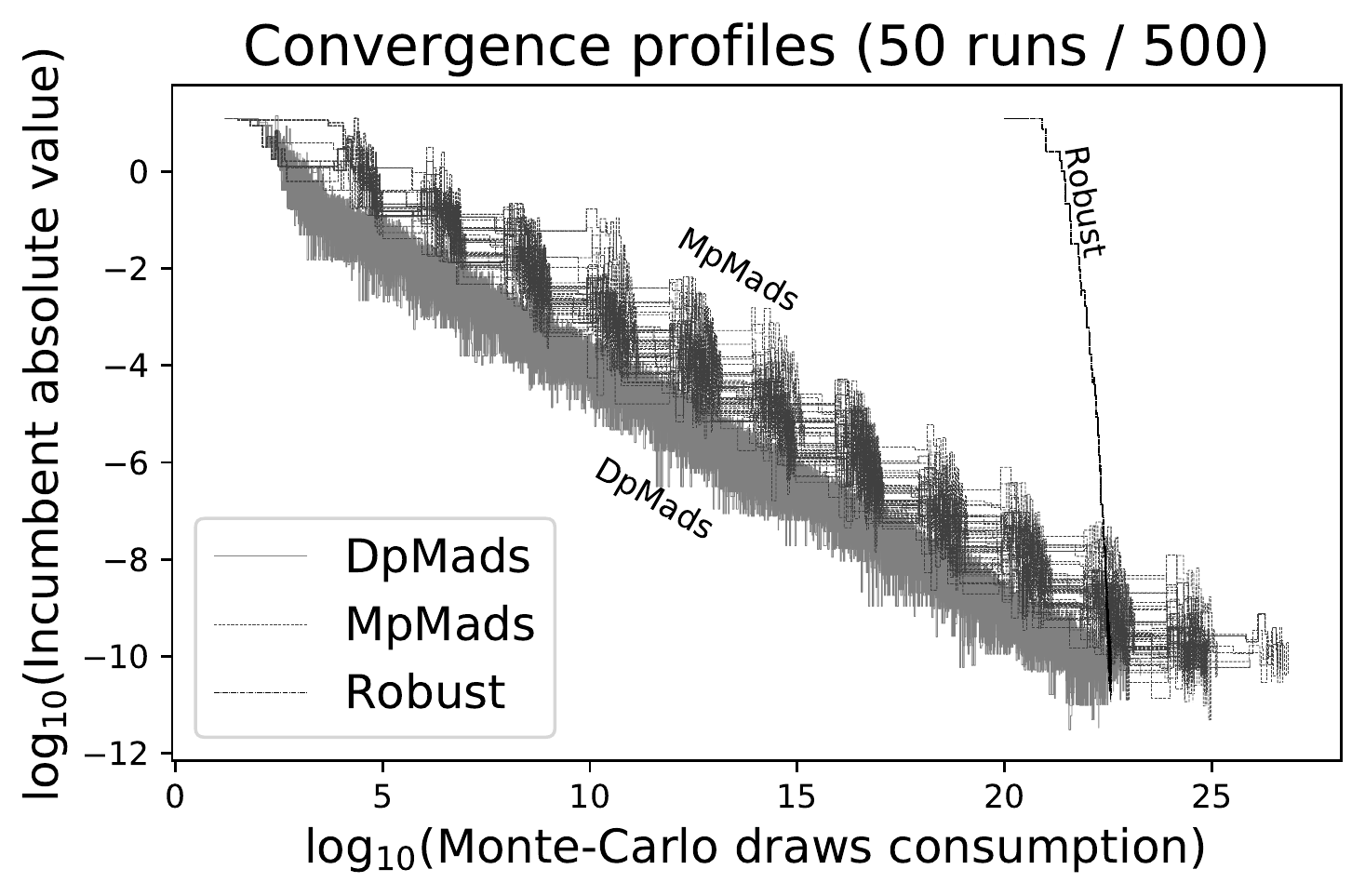}}
    \caption{\guillemets{Norm2} - (a) Results, (b) Monte-Carlo convergence profiles.}
    \label{fig:Norm2-Results-CVMC}
\end{figure}

This is also shown by the profiles in Figure~\ref{fig:Norm2-Perf-Data-Qual}. 
The accuracy profiles show that $10^{23}$ draws is the minimal budget required to make all the algorithms to converge at good precision (while a lower budget makes \robustmads to fail and \mpmads to be dominated by \dpmads). 
With the performance and data profiles, it appears that for any tolerance greater than $10^{-13}$, \dpmads outperforms the other two algorithms, notably around $\tau=10^{-10}$ or $10^{-11}$. 
The precision reference for the data profiles is $10^6$ draws ($\sigma=10^{-3}$).

\begin{figure}[!ht]
    \centering
    \subfigure{\includegraphics[width=\textwidth]{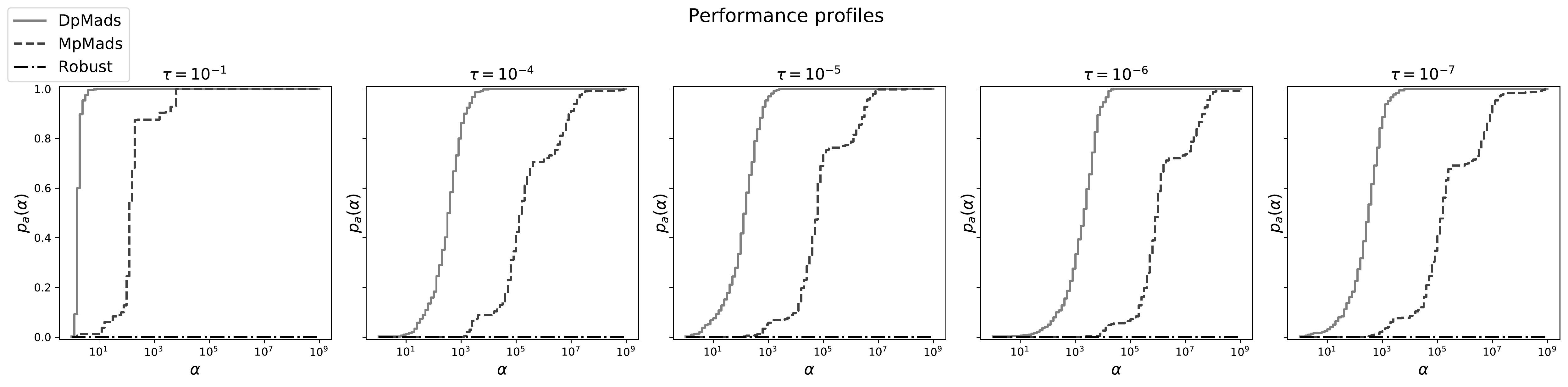}}
    \subfigure{\includegraphics[width=\textwidth]{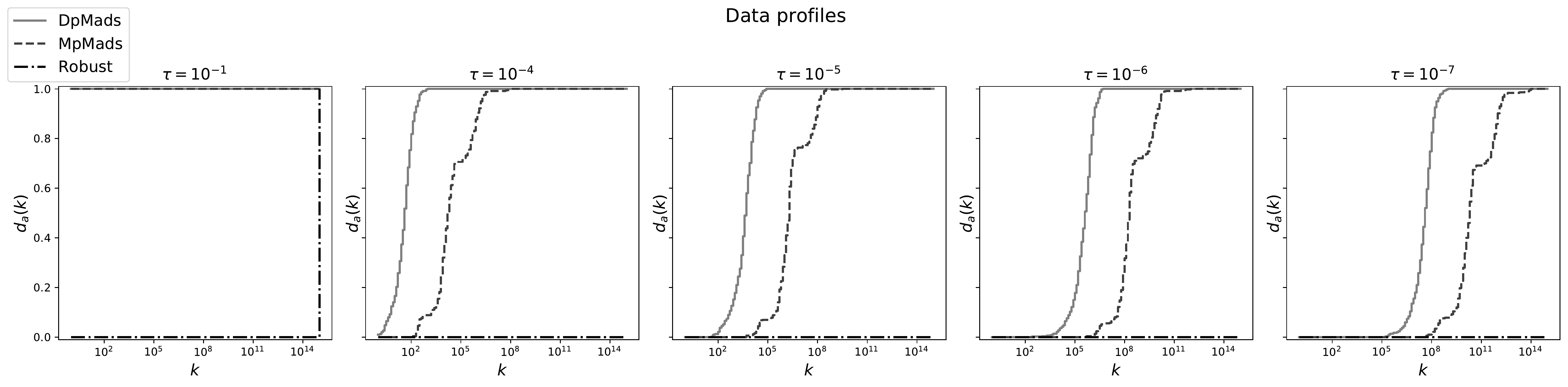}}
    \subfigure{\includegraphics[width=\textwidth]{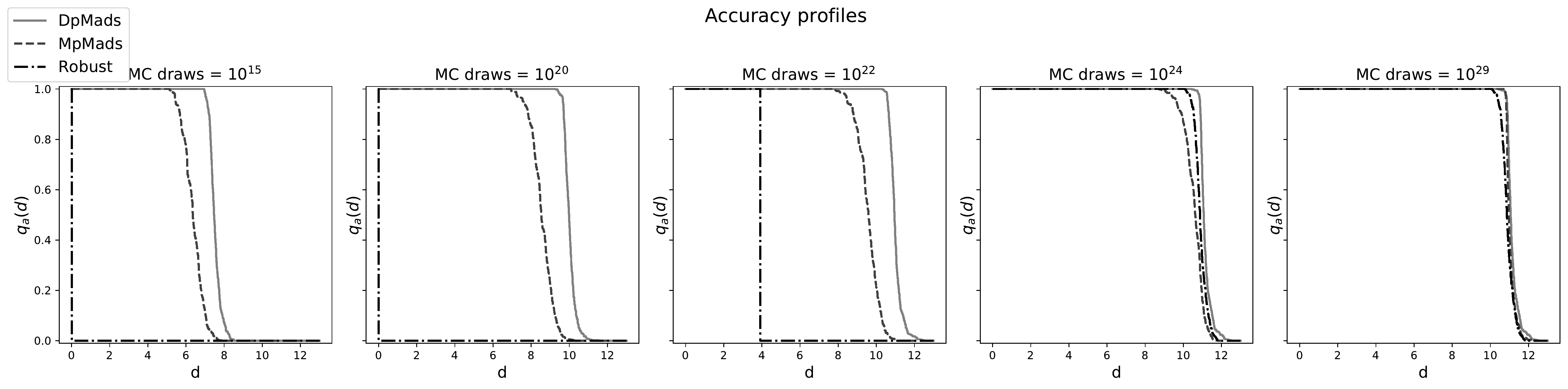}}
    \caption{\guillemets{Norm2} - (a) Performance profiles, (b) Data profiles, (c) Accuracy profiles.}
    \label{fig:Norm2-Perf-Data-Qual}
\end{figure}

The second analytical problem, denoted \guillemets{Moustache}, aims to compare the algorithms during their exploration process, in a restrictive space of variables.
The domain is the thin region illustrated in Figure~\ref{fig:GraphMoustache}.

\begin{figure}[!ht]
\centering
\includegraphics[width=0.75\linewidth]{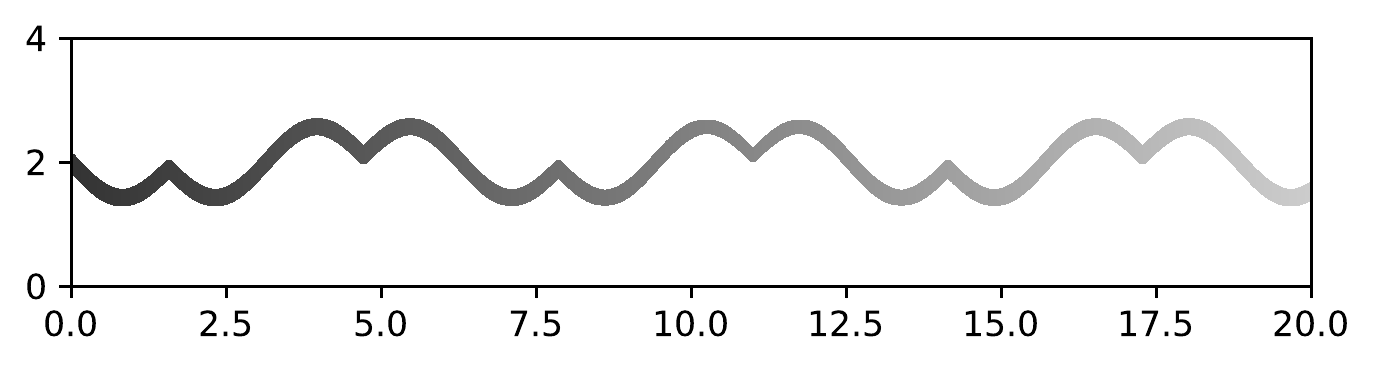}
\caption{The domain of the \guillemets{Moustache} function.}
\label{fig:GraphMoustache}
\end{figure}

Starting from the feasible point $(0,2)$, the objective is to maximise $x$ (thus minimise $f(x,y)=-x$), 
with $f$ not defined outside of the tight ribbon. At a given $x$, the interval of the admissible $y$ values is denoted $I(x)$, defined as:
    $$ \begin{array}{rcl}
            I(x) & = & [ g(x)-\varepsilon(x), g(x)+\varepsilon(x) ], \\
            g(x) & = & -(\abs{\cos(x)}+0.1)\sin(x)+2, \\
            \varepsilon(x) & = & l_{min}+(l_{max}-l_{min})\left(1-\dfrac{1}{1+\abs{x-x_m}}\right), \\
            (l_{min},l_{max},x_m) & = & (0.05,0.1,11).
    \end{array} $$
The noise appears at the computation of $-x$ in the objective. This does not mean that the variable $x$ itself is noisy, the noise appears when the value $-x$ is returned by the objective function. The equivalent Monte-Carlo consumption is $N=1/\sigma^2$. The problem is:
\begin{equation}
    \begin{array}{rl}
    \underset{(x,y)\in \R^2}{\min} & \left( \underset{\#\Cache^k(x,y)\rightarrow +\infty}{\lim}f^k\left(x,y\right) \right) \\
    \text{s.t.} & (x,y) \in [0,20] \times I(x)
    \end{array}
    \text{ with } \left\{\begin{array}{l}
                    \left(x_*^0,y_*^0,\delta_p^{min}\right) = \left(0,2,10^{-5}\right), \\
                    f^k\text{ and }\sigma^k \text{ defined as in Section~\ref{section:Notations} via} \\
                    f_\sigma(x,y) = -x+\Normale{0}{\sigma^2}.
                \end{array}\right.
\end{equation}

\robustmads is run with $\sigma=0.001$, a trade-off between quality of results (higher $\sigma$ fails more often) and consumption ($\sigma \leq 0.0005$ consumes at least $10^{10}$ Monte-Carlo draws but overall the results are equivalent). 
The problem is solved by \dpmads every time. 
As Figure~\ref{fig:Moustache-Results-CVMC} shows, the optimal value $-20$ is always reached, within a budget of up to $10^{11}$ Monte-Carlo draws. 
\dpmads appears to be robust to random effects: a budget of $10^7$ draws is actually always sufficient to solve the problem, the remaining is used trying to intensify around the frontier $x=20$. \mpmads consumes more (from $10$ to $10^3$ times more computational efforts) and has fewer guarantees of quality: it fails once to reach $-20$.

\begin{figure}[!ht]
    \centering
    \subfigure{\includegraphics[width=0.45\textwidth]{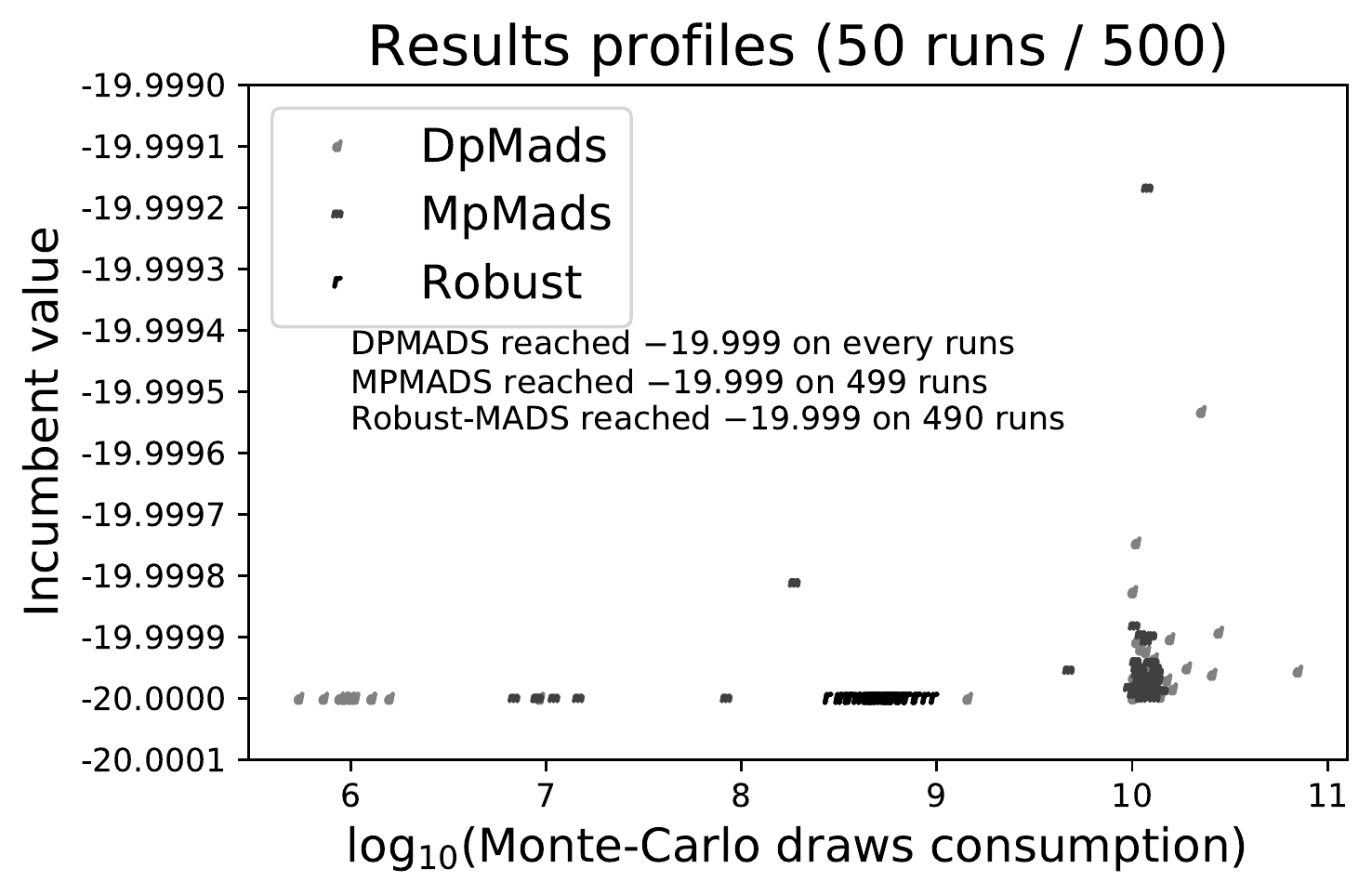}}
    \subfigure{\includegraphics[width=0.45\textwidth]{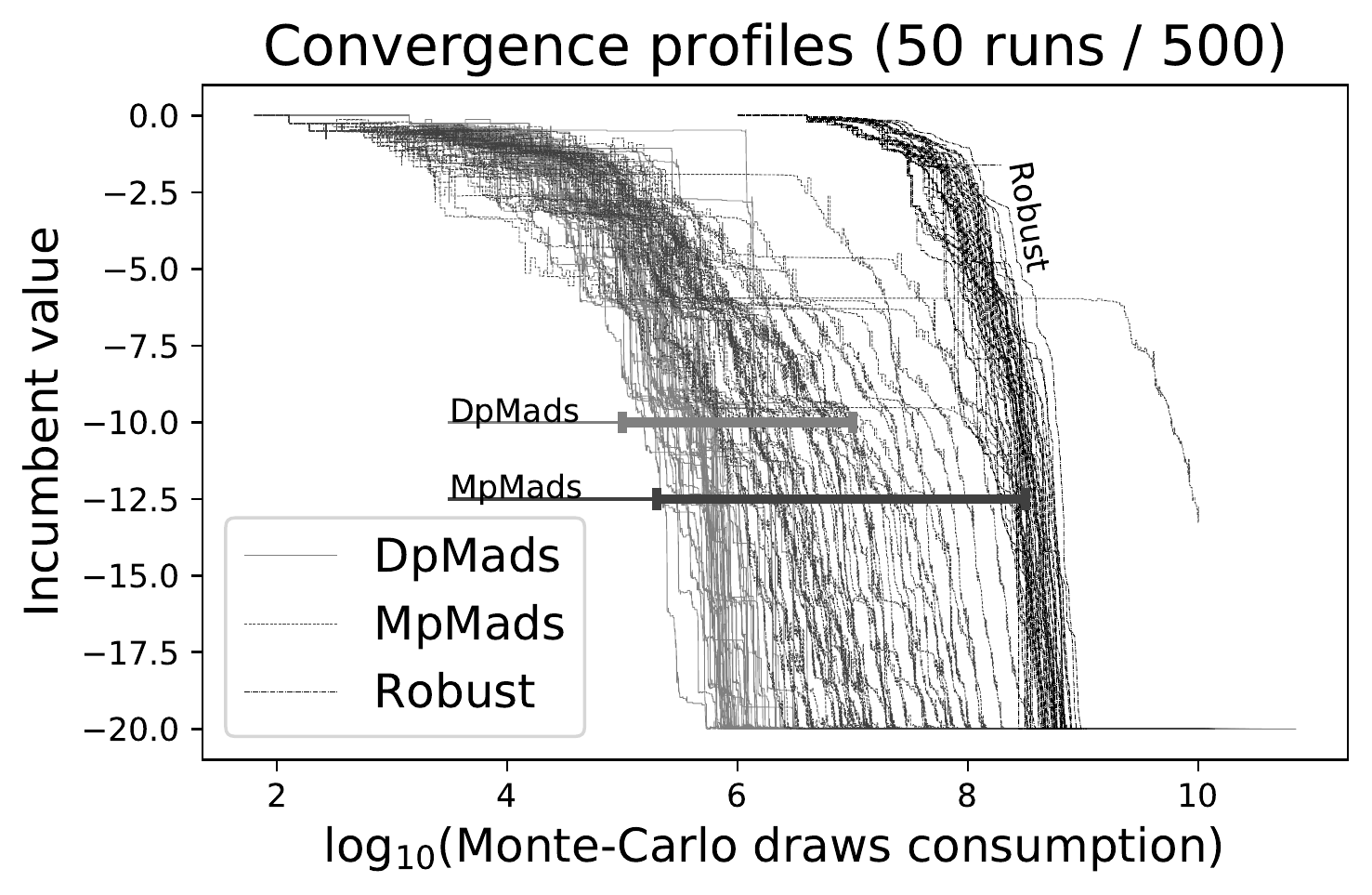}}
    \caption{\guillemets{Moustache} - (a) Results, (b) Monte-Carlo convergence profiles.}
    \label{fig:Moustache-Results-CVMC}
\end{figure}

This analysis can be recovered from the profiles in Figure~\ref{fig:Moustache-Perf-Data-Qual}. On the performance profiles, one can observe that \dpmads and \mpmads solves the problem every time with a tolerance $\tau\geq10^{-6}$ and starts to fail at $\tau=10^{-7}$. 
However, \mpmads struggles to reach the optimum as fast as \dpmads. 
The data profiles with reference precision of $10^6$ draws ($\sigma=10^{-3}$) show that \dpmads requires less effort to reach the optimum at a given tolerance than \mpmads. 
All \robustmads runs require an equivalent computational effort regardless of the tolerance. 
When it reachs the optimum, it performs well on accuracy profiles and attains the optimum at very low tolerance.
Because of the fact that the optimum is at the frontier of $D_f$, the algorithms generates numerous points at $x$ close to $20$. Then, the smoothing effect helps \robustmads to improve its incumbent. The other two algorithms do not have any smoothing effect, then they struggle to intensify as much as \robustmads.

\begin{figure}[!ht]
    \centering
    \subfigure{\includegraphics[width=\textwidth]{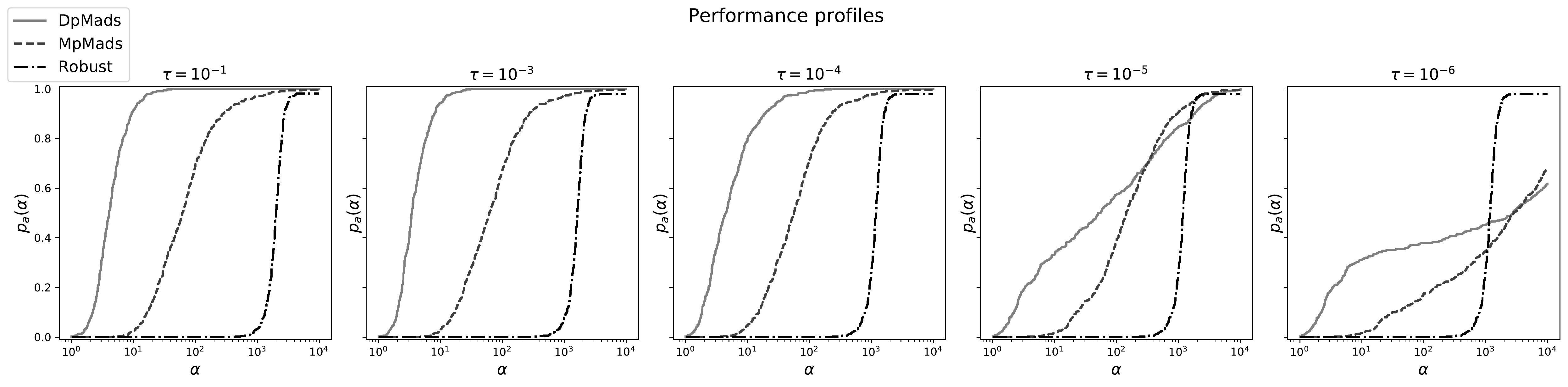}}
    \subfigure{\includegraphics[width=\textwidth]{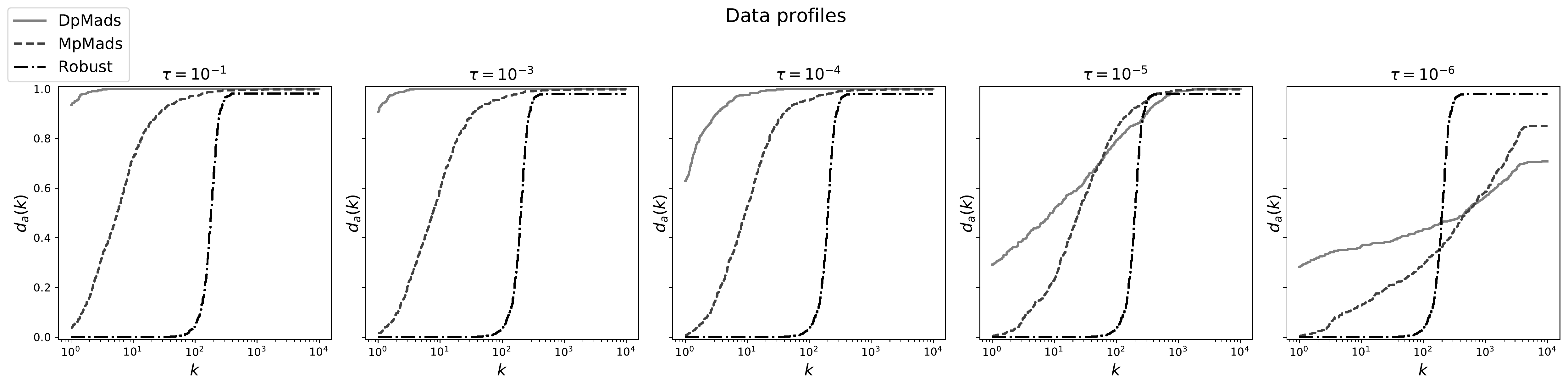}}
    \subfigure{\includegraphics[width=\textwidth]{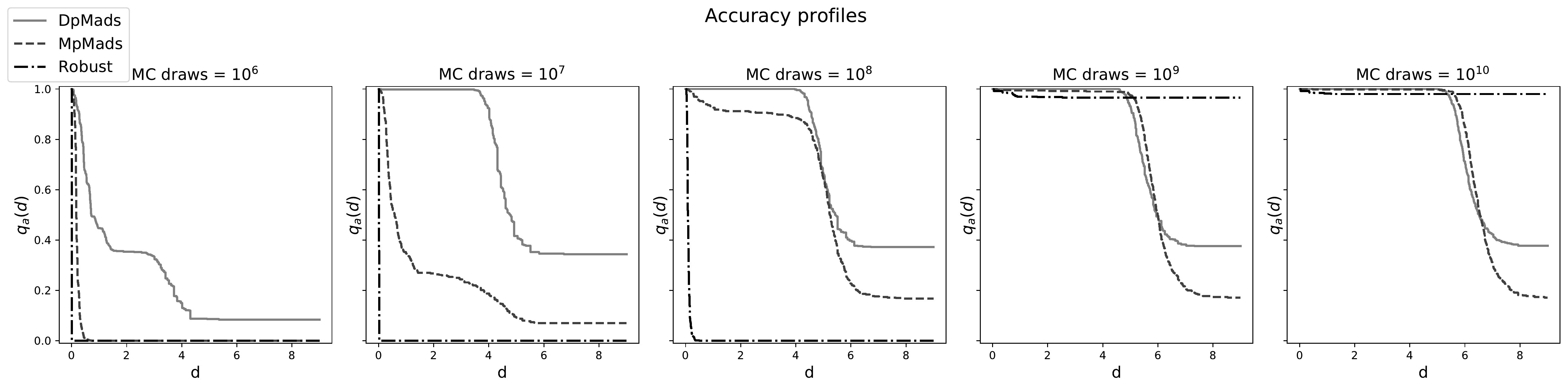}}
    \caption{\guillemets{Moustache} - (a) Performance profiles, (b) Data profiles, (c) Accuracy profiles.}
    \label{fig:Moustache-Perf-Data-Qual}
\end{figure}

Eventually, the \dpmads and \mpmads accuracy profiles decreases at $d=5$, because reaching higher values of $d$ means reaching $20$ at a distance smaller than $10^{-5}$ (the stopping criteria on $\delta_p$). 
This is made difficult because at such low distance between two points, the values of their images are close, so it becomes hard to make estimates sufficiently accurate.

\subsection{Asset Management Problem}
\label{section:VME}

The two algorithms proposed in the present work are now compared with \robustmads on an asset management problem from~\cite{VME2016:BrIoGrLoRe}.
The instance considered has four assets relying on the same spare stock in case of failure.
The replacement date needs to be determined for each asset,
    and the acquisition date needs to be identified to replenish the stock.
The five dates must be from a 350-month horizon,
    which corresponds to a little more than 29 years,
    to maximise the expected Net Present Value (NPV).
Given a set of fixed dates,
    the NPV is computed using a tool called VME,
    developed by EDF~R\&D to evaluate asset investments~\cite{VME2017:Lonchampt}.
VME is a discrete-event simulator
    where an asset management strategy is tested against asset failures 
    that are randomly generated by Monte-Carlo methods.

The problem stated above is much more difficult than the one in \cite{VME2016:BrIoGrLoRe}
    where the variables were annual instead of monthly,
    and covered a 10-year horizon.

Let $x = (a_1, a_2, a_3, a_4, s)$ be a solution of the above problem
    where $a_i$ is the replacement date for the asset $i \in \{1,2,3,4\}$
    and $s$ the date at which a new spare is added to the stock.
Each of the five variables are integer and take any value from 1 to 350.

Computational experiments are performed on a HP~Z420 Workstation,
    Intel Dual-Core Xeon E5-1620~@~3.60GHz,
    RAM~32.0 Go, 64~bits,
    Windows~7~Pro~SP1.
The initial solution for all experiments is $x_*^0 = (240, 240, 132, 240, 120)$.

\subsubsection{Preconditioning}

In VME, the source of stochasticity lies a complex Monte-Carlo simulation over a so-called \guillemets{tree of scenarios}. 
Since the true value of the objective funciton is defined as the expected negative cashflow induced by a given choice of variables, knowing any potential scenario and its probability, it is difficult to recover the exact law followed by the noise. Therefore, some proactive choices have been made while analysing the problem.

Intense tests showed that the following law makes an acceptable approximation of the noise law.
It overestimates its magnitude on some points $x$ but never underestimates it.

The blackbox receives a value $\sigma$ from the algorithms and translates it to a number $N$ of Monte-Carlo draws. The tests showed that $N=2^{10}$ leads to a standard deviation $\sigma=1800$, and $\sigma$ is divided by $2$ when $N$ is multiplied by $4$. Denoting $f_N(\cdot)$ the Monte-Carlo approximation run by the blackbox with $N$ draws, the law of the noise is approximated by:
$$ f_\sigma(x) \sim \Normale{f(x)}{\sigma^2}= f_N(x)~\text{ with }~N=2^{10-2\log_2(\sigma/1800)}=\dfrac{2^{10}\times1800^2}{\sigma^2}. $$
Then, when the optimization algorithms runs with a given value of $\sigma$,
 the blackbox computes the corresponding number $N$ of draws, 
 then performs a Monte-Carlo simulation with these $N$ draws 
 and returns the output to the algorithm. 
This value can be interpreted by the algorithm as an observation of 
    $\Normale{f(x)}{\sigma^2}$, as required.
The least value of $N$ is set to $1024$ and therefore $\sigma_{max}=1800$.

During these tests, an hidden constraint may be triggered. 
The program restrict the number of Monte-Carlo draws to be at most 
    $2850812 \simeq 2^{21.4429}$, 
    and does not run any attempt to use more than this number. 
As a consequence, the standard deviation of the noise cannot fall arbitrarily close to zero: the smallest possible value of $\sigma$ is $\sigma_{min}=34.1144$. 
Thus, using monotonic strategies such ad \mpmads, the user needs to ensure that the precision is unlikely to grow too high, otherwise the algorithm may eventually ask for a $\sigma$ which cannot be computed by the blackbox. 
This restriction contributes to make the dynamic strategy interesting, as \dpmads naturally avoids to increase the precision as much as possible. 
To avoid problems, the value $\sigma_{min}=50$ is chosen on the tests.

The following tests compare \dpmads, \mpmads and \robustmads, using this preconditioned formula, and an implementation of the deterministic \mads algorithm. For \robustmads and \mads, the fact that the variables are all integer is integrated through the \textit{Granular Mesh} proposed in~\cite{G-2018-16}. For \dpmads and \mpmads, a lower bound on the $\delta_m$ mesh size is implemented and set to $1$, such that $(\delta_m^k)_k$ remains above this bound: $\delta_m^{k+1} \gets \delta_m^k/2$ is not computed if $\delta_m^k=1$.

\subsubsection{Results on the asset management problem}
\renewcommand{\tabcolsep}{4pt}

For the deterministic algorithm, a fixed number of draws per evaluation is fixed a priori and the algorithm runs as if the computation of $f_N$ were exact.
Table~\ref{tab:vanillaMADS} gives results using the \nomad~\cite{Le09b} implementation of \mads,
version~3.9.1, with neither models nor anisotropic mesh, and the speculative search turned off.
For comparisons, replications of runs were made on the instances that required less than two weeks to complete.
One can observe the influence of the noise: 
 with few draws per evaluation, objective function values
 $f^k(x_*)$ are high (around $8700$) while the runs with more draws propose reach objective values near $5400$. 
As this is a maximisation problem, optima on the first runs are overestimated by approximately $3000$ units.

\begin{table}[ht]
\center
\begin{tabular}{|r|c c c c c|c|c|c|r|r|}
\hline
draws / eval & \multicolumn{5}{c|}{$x_*$} & $f^k(x_*)$ & $\sigma^k(x_*)$ &Evals & Time (s) & Time  \\
\hline \hline
    1024 & 261 & 288 & 120 & 250 & 107 &  8668.33 &            1800 & 1889 &     1891  &   32 m \\
    1024 & 262 & 292 & 129 & 240 & 112 &  9005.12 &            1800 & 2050 &     2055  &   34 m \\ 
    1024 & 247 & 345 & 144 & 289 & 122 &  8371.24 &            1800 & 1891 &     1910  &   32 m \\ \hline
   10000 & 272 & 336 & 121 & 248 & 111 &  6140.15 &             576 & 2594 &    17636  &  5.9 h \\
   10000 & 247 & 332 &  84 & 209 &  66 &  5893.69 &             576 & 2247 &    15724  &  4.4 h \\
   10000 & 257 & 286 & 117 & 301 &  97 &  6209.33 &             576 & 3319 &    22419  &  6.2 h \\ \hline
  100000 & 267 & 281 & 119 & 229 & 108 &  5623.74 &             182 & 3268 &   214310  &  2.5 d \\
  100000 & 259 & 297 & 121 & 245 & 108 &  5630.17 &             182 & 2316 &   148752  &  1.7 d \\
  100000 & 260 & 304 & 125 & 248 & 112 &  5622.03 &             182 & 3582 &   229966  &  2.7 d \\ \hline
  500000 & 251 & 296 & 120 & 224 & 105 &  5431.04 &            81.5 & 2985 &   961232  & 11.1 d \\
  500000 & 268 & 243 & 133 & 213 & 119 &  5470.23 &            81.5 & 2324 &   736527  &  8.5 d \\ \hline
 1000000 & 257 & 292 & 130 & 226 & 118 &  5437.37 &            57.6 & 3403 &  2187009  & 25.3 d \\ \hline
\end{tabular}
\caption{\mads (\nomad~3.9.1).}
\label{tab:vanillaMADS}
\end{table}

\robustmads also requires a fixed number of draws per evaluations. 
Table~\ref{tab:robustMADS} reports results obtained with the \nomad implementation of \robustmads
    without the anisotropic mesh and with no speculative search. 
The column labelled $f_\sigma(x;\omega)$ represents the value computed by the blackbox on $x$, and $f^k(x)$ the estimated smoothed value proposed by \robustmads. 
Due to the \robustmads mechanics, the cache $\Cache^k(x)$ is either empty or contains a single observation. 
However, the precision $\sigma^k(x_*)$ is intractable because of the smoothing. 
One can observe that the smoothed values are coherent, because regardless of the number of draws, the proposed smoothed values are all close to $5500$.

\begin{table}[ht]
\center
\begin{tabular}{|r|c c c c c|c|c|c|r|r|}
\hline
draws / eval & \multicolumn{5}{c|}{$x_*$}  & $f_\sigma(x_*;\omega)$ & $f^k(x_*)$ & Evals & Time (s) &  Time  \\
\hline \hline
    100  & 268 & 306 & 145 & 191 & 132 & 11313.60 &   5601.01  & 36526 &   14557  &  4.0 h \\
    100  & 237 & 229 & 115 & 266 & 103 & 15456.60 &   5576.94  & 29707 &   12978  &  3.6 h \\
    100  & 300 & 321 & 145 & 245 & 134 & 11508.50 &   5803.96  & 36678 &   16728  &  4.7 h \\ \hline
   1024  & 257 & 273 & 132 & 208 & 119 &  7850.24 &   5434.91  & 21390 &   21507  &  5.9 h \\ 
   1024  & 267 & 247 & 124 & 295 & 112 &  9024.06 &   5276.66  & 22056 &   22338  &  6.2 h \\ \hline
  10000  & 261 & 283 & 133 & 221 & 119 &  6138.82 &   5367.90  & 22168 &  152875  &  1.8 d \\ \hline
 100000  & 275 & 261 & 134 & 230 & 121 &  5589.82 &   5352.53  & 21847 & 1408037  & 16.3 d \\ \hline
\end{tabular}
\caption{Robust-MADS (\nomad~3.9.1).}
\label{tab:robustMADS}
\end{table}

These results shows that if the number of draws per evaluation is fixed a priori, it needs to be high. With the accuracy gain provided by \robustmads' smoothing effect, this number can remain close to $10^5$ but then, the computation time is important ($16.3$ days). 
These observations, combined with the estimated optimal solution, $x_* = (275, 261, 134, 230, 121)$ with $f(x_*) \simeq 5352$, are used for comparisons with \dpmads and \mpmads.

For these two algorithms, preliminary tests showed that the $UpdateR$ function has a considerable influence. 
Since the noise magnitude is initially very high, the function need to allow a fast improvement of the precision index. 
Otherwise (as it appeared on the runs with such a situation), 
    numerous iterations are performed at very low precision and then, 
    the cache $\Cache^k$ becomes large and full of imprecise estimates which all are re-estimated by the $Search$.
This process consumes a noticeable amount of draws. 
The following results are generated by \dpmads with default parameters 
    except that the threshold $\tau$ in the $Search$ is increased to $40\%$, 
    and by \mpmads with an extended $UpdateR$ function allowing the precision increase to exceed one unit.
Figure~\ref{fig:VMEgrapheCV} illustrates one run for the three non-deterministic algorithms.
The shaded area around the curves depicts the estimated standard deviation of the incumbent values.
All the other runs have similar results.

\begin{figure}[!ht]
\centering
\includegraphics[width=0.75\linewidth]{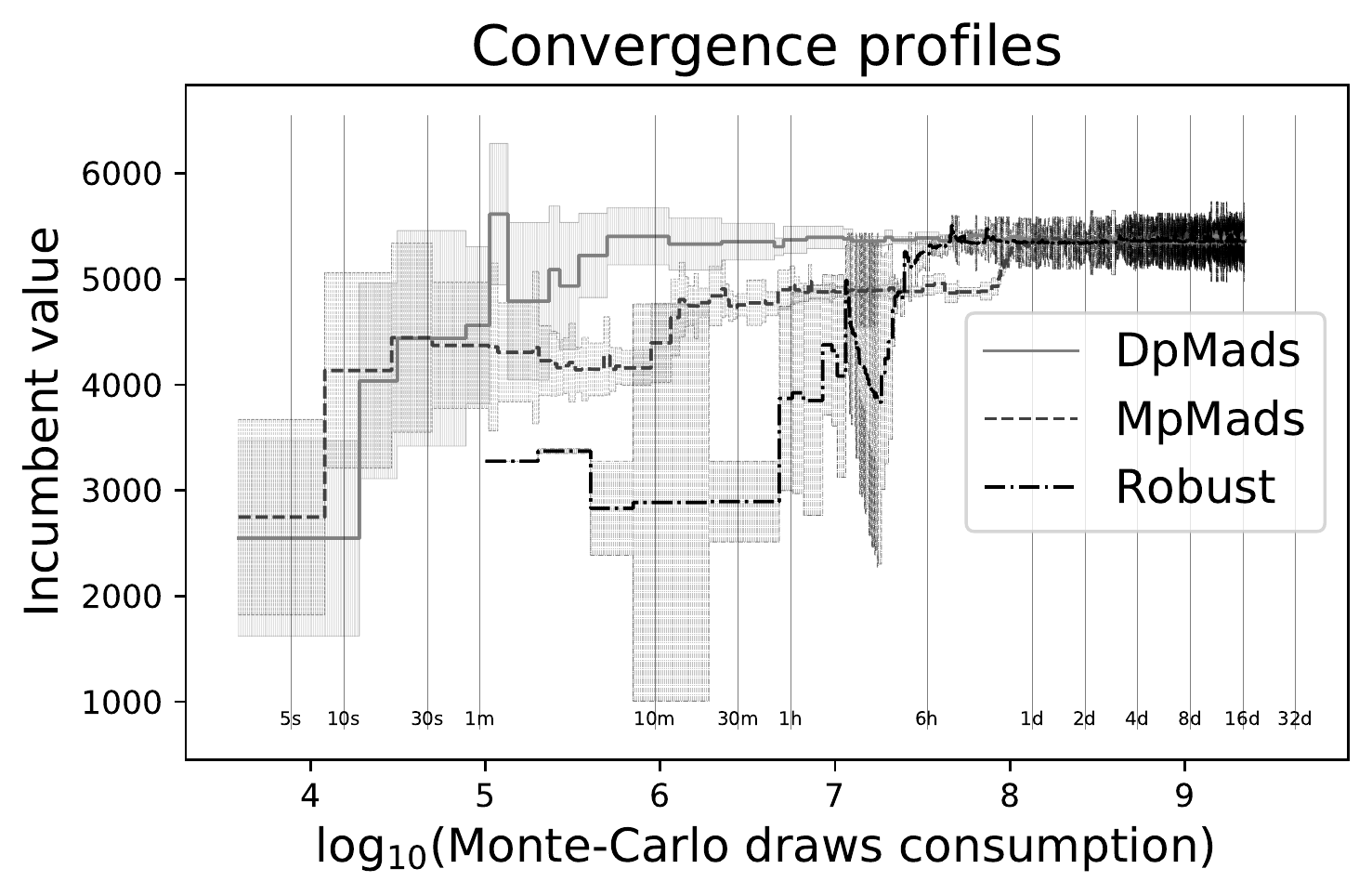}
\caption{Convergence graph, with estimated value of the incumbent in ordinate. The length of the vertical range around a curve is twice the estimated standard deviation of the incumbent value.}
\label{fig:VMEgrapheCV}
\end{figure}

The figure shows that \dpmads reaches a nearly constant incumbent function value after $10^6$ draws, while \robustmads stabilises only after $10^8$ draws. 
Also, the standard deviation of \dpmads' incumbent is close to $100$ while \dpmads stabilises around it.
All the computational efforts performed after are dedicated to the reduction of the standard deviation, with no change of incumbent. After $10^{7}$ draws, because of the search and the precision index going high, the incumbent and other quasi-optimal solutions have a standard deviation $\sigma^k < 10$. 
In summary, 
    to reach the optimal solution,
    \robustmads requires $100$ times more draws than \dpmads.
\dpmads uses the remaining budget to intensively analyse the most promising solutions.
A detailed analysis of the precision index $r$ shows that it starts to improves strongly at that point of the optimization. Therefore, the $Poll$ step (Algorithm~\ref{algo:poll}) does not contributes anymore to the optimization, because all the candidates already satisfies $\sigma^k(x) < \sigma^k = \rho(r^k) \simeq \sigma_{min}=50$. Then, behind $10^7$ draws consummated, estimates improvements are performed by the $Search$ step only.

The \mpmads curve shows an analogous behaviour to \dpmads. 
However, one can note the drop in the incumbent value from $10^5$ to $10^7$ draws. 
A more detailed analysis reveals that \mpmads encountered a nearly optimal basin of solutions and spent many draws exploring it. 
\mpmads is affected by this effect on almost all of its runs, 
    while \dpmads also visited this basin but did not waste as many draws in exploring it.
A possible explanation, recovered from the detailed logs, is that the low precision used by \dpmads actually helps it to avoid this basin, while \mpmads has a precision high enough to detect that it is interesting.

Figure~\ref{fig:VMEgrapheCV} may also be used to compare the quality of the solutions in terms to time elapsed.
After $10$ minutes of computing, 
    \dpmads has found a nearly optimal solution
    while \mpmads is still exploring near the $4000$ mark,
    and \robustmads is way below at $3000$.
It takes an entire day of computing for the three methods to reach a comparable solution.

\comment{
\noindent
Le run actuel de DpMADS est long à converger (trop long).
Toutefois, la précision finale beaucoup plus élévée que RobustMADS.
Extraire des logs le temps qu'a pris DpMADS pour avoir la précision de RobustMADS (run de 16 d).
Des runs de DpMADs vont être refaits avec autres paramètres pour voir la sensibilité de ceux-ci sur temps de calcul (discussion à faire là-dessus).

On note que le problème a plusieurs optima locaux.
Il semble que MpMADS se fait plus facilement piéger par "le premier optimum" que DpMADS...

\vspace{5mm}


\begin{table}[ht]
\center
\begin{tabular}{|c|c c c c c|c|r|r|r|r|}
\hline
MC draws per it. & \multicolumn{5}{c|}{$x_*$}  & $f(x_*)$ & $\sigma(x_*)$ & Evals &  Time (s) &  Time  \\
\hline \hline
\multicolumn{11}{|c|}{\nomad~3.9.1 - vanille}                                        \\ \hline
 1000000      & 257 & 292 & 130 & 226 & 118 &  5437.37 &        57.6 &  3403 & 2187009 & 25.3 d \\ \hline
 \multicolumn{11}{|c|}{\nomad~3.9.1 - Robust-MADS}                                   \\ \hline
 100000       & 275 & 261 & 134 & 230 & 121 &  5589.82 &$\simeq$200 & 21847 & 1408037 & 16.3 d \\ \hline
\hline \hline
\multicolumn{11}{|c|}{\textsc{DpMads} - $\theta=0.001$ THold$=0.25$ SigMin$=50$} \\ \hline
S.O. (voir C) & 276 & 264 & 135 & 231 & 123 &  5383.24 &       34.31 & 10773 & 3077232 & 35.6 d\\
    (voir C3) & 281 & 264 & 134 & 237 & 123 &  5419.45 &       61.81 & 10112 & 2545277 & 29.5 d \\
    (voir C2) & 280 & 267 & 135 & 232 & 123 &  5391.73 &       37.03 &  9785 & 2308796 & 26.7 d \\
    (voir C1) & 283 & 266 & 134 & 233 & 123 &  5375.03 &        9.91 &  9403 & 2045057 & 23.7 d \\
    (voir C*) & 283 & 270 & 134 & 229 & 123 &  5427.86 &       84.85 &  6621 &  826492 &  9.6 d \\ \hline
\multicolumn{11}{|c|}{\textsc{DpMads} - $\theta=0.05$ THold$=0.40$ SigMin$=50$} \\ \hline
S.O. (voir D) & 275 & 256 & 134 & 227 & 123 &  5377.35 &       11.79 &  2202 & 2304067 & 26.7 d \\
    (voir D3) & 269 & 254 & 132 & 227 & 121 &  5367.06 &        6.51 &  1778 & 1792305 & 20.7 d \\
    (voir D2) & 271 & 254 & 132 & 227 & 119 &  5384.27 &       35.36 &  1559 & 1532060 & 17.7 d \\
    (voir D1) & 269 & 254 & 132 & 231 & 119 &  5375.92 &       50.00 &  1341 & 1265947 & 14.7 d \\
    (voir D*) & 275 & 250 & 130 & 227 & 119 &  5375.72 &       21.39 &   145 &   56266 & 0.65 d \\ \hline
\multicolumn{11}{|c|}{\textsc{DpMads} - $\theta=0.05$ THold$=0.40$ SigMin$=35$} \\ \hline
S.O. (voir E) & 255 & 267 & 132 & 223 & 119 &  5355.81 &        4.31 &   924 & 1808126 & 20.9 d \\
    (voir E3) & 255 & 267 & 132 & 227 & 119 &  5354.25 &        4.40 &   715 & 1295711 & 15.0 d \\
    (voir E2) & 247 & 267 & 132 & 235 & 119 &  5353.07 &        6.09 &   592 & 1033990 & 12.0 d \\
    (voir E1) & 247 & 267 & 140 & 235 & 127 &  5364.16 &       24.75 &   477 &  751800 & 8.70 d \\ \hline
\multicolumn{11}{|c|}{\textsc{DpMads} - $\theta=0.1$ THold$=0.40$ SigMin$=35$} \\ \hline
S.O. (voir F) & 277 & 258 & 138 & 219 & 127 &  5364.07 &       12.37 &   816 & 1803244 & 20.9 d \\
    (voir F3) & 277 & 258 & 130 & 219 & 119 &  5363.91 &        5.47 &   611 & 1291383 & 14.9 d \\
    (voir F2) & 277 & 266 & 130 & 227 & 119 &  5364.79 &       12.37 &   501 & 1021640 & 11.8 d \\
    (voir F1) & 277 & 262 & 130 & 215 & 119 &  5358.43 &        8.25 &   398 &  761666 & 8.82 d \\ \hline
\hline \hline
\multicolumn{11}{|c|}{\textsc{MpMads} - $\theta=0.005$ THold$=0.25$ SigMin$=50$} \\ \hline
S.O. (voir M) & 256 & 243 & 137 & 231 & 125 &  5366.08 &       12.68 &  5655 & 2296604 & 26.58 d \\
    (voir M3) & 255 & 244 & 136 & 231 & 124 &  5378.37 &       39.97 &  5083 & 1769885 & 20.48 d \\
    (voir M2) & 256 & 243 & 137 & 231 & 124 &  5386.43 &       40.44 &  4822 & 1528139 & 17.69 d \\
    (voir M1) & 256 & 244 & 138 & 232 & 126 &  5367.75 &       13.87 &  4534 & 1267020 & 14.66 d \\
    (voir M*) & 251 & 238 & 131 & 225 & 119 &  5388.34 &       52.40 &  1843 &   61155 &  0.70 d \\ \hline
\multicolumn{11}{|c|}{\textsc{MpMads} - $\theta=0.005$ THold$=0.25$ SigMin$=50$ RUpdate modified} \\ \hline
S.O. (voir N) & 243 & 256 & 135 & 230 & 121 &  5372.25 &       50.00 &  2049 & 1856199 &  21.5 d \\
    (voir N3) & 242 & 257 & 134 & 230 & 122 &  5369.50 &       25.02 &  1655 & 1379675 &  16.0 d \\
    (voir N2) & 241 & 256 & 133 & 230 & 121 &  5363.89 &       50.09 &  1437 & 1121818 &  13.0 d \\
    (voir N1) & 242 & 256 & 133 & 230 & 121 &  5356.68 &       22.50 &  1182 &  833606 &  9.65 d \\ \hline
\end{tabular}
\caption{Comparaison NOMAD vs Pierre-Yves.}
\label{tab:NOMADvsPierreYves}
\end{table}


\noindent
C  - nb total de tirages MC = 3924213966\\
C3 - nb total de tirages MC = 3340743288\\
C2 - nb total de tirages MC = 3079762041\\
C1 - nb total de tirages MC = 2796031312 ( 297355 en moyenne par éval.)\\
C* - nb total de tirages MC = 1246215841 ( 188222 en moyenne par éval.)\\
D  - nb total de tirages MC = 2775692382\\
D3 - nb total de tirages MC = 2213000710\\
D2 - nb total de tirages MC = 1922365153\\
D1 - nb total de tirages MC = 1633056699 (1217790 en moyenne par éval.)\\
D* - nb total de tirages MC =   89530810 ( 617454 en moyenne par éval.)\\
E  - nb total de tirages MC = 1976274504\\
E3 - nb total de tirages MC = 1415409685\\
E2 - nb total de tirages MC = 1124505604\\
E1 - nb total de tirages MC =  820767450 (1720686 en moyenne par éval.)\\
F  - nb total de tirages MC = 1962372010\\
F3 - nb total de tirages MC = 1407155135\\
F2 - nb total de tirages MC = 1109233885\\
F1 - nb total de tirages MC =  830271260 (2086108 en moyenne par éval.)\\
M  - nb total de tirages MC = 2756701526\\
M3 - nb total de tirages MC = 2176425295\\
M2 - nb total de tirages MC = 1907511478\\
M1 - nb total de tirages MC = 1628573074 ( 359191 en moyenne par éval.)\\
M* - nb total de tirages MC =   96614645 (  52422 en moyenne par éval.)\\
N  - nb total de tirages MC = 2073505627\\
N3 - nb total de tirages MC = 1551073623\\
N2 - nb total de tirages MC = 1263816380\\
N1 - nb total de tirages MC =  953829508 ( 806962 en moyenne par éval.)

\noindent
$\theta$ du RUpdate = (0.005; 0.05; 0.10; 0.30; 0.40; 0.475; 0.525; 0.60; 0.70; 0.90; 0.95; 0.995)

}

\comment{
\clearpage


\begin{table}[ht]
\center
\begin{tabular}{|r|c c c c c|c|c|r|}
\hline
  MC/eval  & \multicolumn{5}{c|}{$x^*$}  &  $f(x^*)$  &  evals &  time  \\
\hline \hline
      1000 & 261 & 288 & 120 & 250 & 107 &   8668.33  &   1889 &  0.5 h \\
      1000 & 262 & 292 & 129 & 240 & 112 &   9005.12  &   2050 &  0.5 h \\ 
      1000 & 247 & 345 & 144 & 289 & 122 &   8371.24  &   1891 &  0.5 h \\ \hline
     10000 & 272 & 336 & 121 & 248 & 111 &   6140.15  &   2594 &  5.9 h \\
     10000 & 247 & 332 &  84 & 209 &  66 &   5893.69  &   2247 &  4.4 h \\
     10000 & 257 & 286 & 117 & 301 &  97 &   6209.33  &   3319 &  6.2 h \\ \hline
    100000 & 267 & 281 & 119 & 229 & 108 &   5623.74  &   3268 &  2.5 d \\
    100000 & 259 & 297 & 121 & 245 & 108 &   5630.17  &   2316 &  1.7 d \\
    100000 & 260 & 304 & 125 & 248 & 112 &   5622.03  &   3582 &  2.7 d \\ \hline
    500000 & 251 & 296 & 120 & 224 & 105 &   5431.04  &   2985 & 11.1 d \\
    500000 & 268 & 243 & 133 & 213 & 119 &   5470.23  &   2324 &  8.5 d \\ \hline
   1000000 & 257 & 292 & 130 & 226 & 118 &   5437.37  &   3403 & 25.3 d \\ \hline
\end{tabular}
\caption{MADS (\nomad~3.9.1).}
\label{tab:vanillaMADS_tmp}
\end{table}

\begin{table}[ht]
\center
\begin{tabular}{|r|c c c c c|r|c|c|r|}
\hline
 MC/eval & \multicolumn{5}{c|}{$x^*$}  & $f(x^*)$ & $f_R(x^*)$ & evals &   time  \\
\hline \hline
    100  & 268 & 306 & 145 & 191 & 132 & 11313.60 &   5601.01  & 36526 &   4.0 h \\
    100  & 237 & 229 & 115 & 266 & 103 & 15456.60 &   5576.94  & 29707 &   3.6 h \\
    100  & 300 & 321 & 145 & 245 & 134 & 11508.50 &   5803.96  & 36678 &   4.7 h \\ \hline
   1000  & 257 & 273 & 132 & 208 & 119 &  7850.24 &   5434.91  & 21390 &   5.9 h \\ 
   1000  & 267 & 247 & 124 & 295 & 112 &  9024.06 &   5276.66  & 22056 &   6.2 h \\ \hline
  10000  & 261 & 283 & 133 & 221 & 119 &  6138.82 &   5367.90  & 22168 &   1.8 d \\ \hline
 100000  & 275 & 261 & 134 & 230 & 121 &  5589.82 &   5352.53  & 21847 &  16.3 d \\ \hline
\end{tabular}
\caption{Robust-MADS (\nomad~3.9.1).}
\label{tab:robustMADS_tmp}
\end{table}

\clearpage
}

\section{Discussion}

This work compares two generic strategies to optimize noisy problems. 
The monotonic \mpmads strategy avoid uncertainty as much as possible, leading to highly plausible data anytime in the optimization process (such as the localisation of the incumbent) with the drawback of an important computational effort. 
Following a different paradigm, the dynamic precision \dpmads algorithm reduces the computational effort per iteration but faces more uncertainty as a consequence. 
A noticeable point is the theoretical equivalence of these two strategies, as they share the same convergence analysis.

However, these two strategies behave differently in practical contexts. 
Both algorithms outperform strategies which do not exploit the adaptive precision. 
The \dpmads algorithm outperforms \mpmads on the test problems considered in this work, in the sense that it reaches an higher quality solution within a lower computational budget. 
Also, the two algorithms should not be considered for the same usage. 
Within a prescribed computational budget, the dynamic strategy tends to explore more solutions than the monotonic one. 
However, with the monotonic paradigm, the smaller set of evaluated points is well-known, in the sense that the estimated objective function value is more precise for all these points.

It should be noted that some improvements can be developed. 
Parameter values and implementation choices could be challenged. 
Notably, one could define some specific parameter values for given families of problems. 
In addition, precision growing to infinity leads to extremely long computational time in practical contexts, 
    therefore the monotonic strategy needs to be used with precaution. 
Meanwhile, the dynamic strategy struggles on problems with a flat objective function because it avoid as much as possible to improve the precision.
Also, usage of the precision index could be made more flexible: for example, its value could be modified at every evaluation rather than every iteration.

For future research, an important conceptual step would be the possibility to solve constrained problems, with constraints affected by an adaptive precision noise. Generalisation of the law followed by the noise, from centred normal to generic, could also be considered. The theory could also be enhanced with the addition of models to predict the behaviour of the objective.

Overall, the dynamic strategy could be chosen when one desires numerous solutions, while the monotonic strategy should be considered if one prefers fewer solutions but with an high precision on each. This comes from the fact that the dynamic strategy is designed to limit as much as possible the consumption of the computational budget per solution, while the monotonic allows more efforts per solution in order to rapidly identify the interesting ones.

\subsection*{Acknowledgements}

Thanks to the research and development team at EDF for sharing the asset management problem, 
and NSERC for its support through the CRD grant (\#RDCPJ 490744-15) with Hydro-Qu\'ebec and Rio~Tinto.

\bibliography{bibliography.bib}

\begin{thebibliography}{10}

\bibitem{AbAuDeLe09}
M.A. Abramson, C.~Audet, J.E. {Dennis, Jr.}, and S.~{Le~Digabel}.
\newblock {OrthoMADS: A Deterministic MADS Instance with Orthogonal
  Directions}.
\newblock {\em SIAM Journal on Optimization}, 20(2):948--966, 2009.

\bibitem{amaran2017simulation}
S.~Amaran, N.V. Sahinidis, B.~Sharda, and S.J. Bury.
\newblock Simulation optimization: A review of algorithms and applications,
  2017.

\bibitem{AuDe2006}
C.~Audet and J.E. {Dennis, Jr.}
\newblock {Mesh Adaptive Direct Search Algorithms for Constrained
  Optimization}.
\newblock {\em SIAM Journal on Optimization}, 17(1):188--217, 2006.

\bibitem{AuDe09a}
C.~Audet and J.E. {Dennis, Jr.}
\newblock {A Progressive Barrier for Derivative-Free Nonlinear Programming}.
\newblock {\em SIAM Journal on Optimization}, 20(1):445--472, 2009.

\bibitem{G-2018-16}
C.~Audet, S.~Le Digabel, and C.~Tribes.
\newblock {The mesh adaptive direct search algorithm for granular and discrete
  variables}.
\newblock Technical Report G-2018-16, Les cahiers du GERAD, 2018.

\bibitem{G-2019-30}
C.~Audet, K.J. Dzahini, S.~Le Digabel, and M.~Kokkolaras.
\newblock {Constrained stochastic blackbox optimization using probabilistic
  estimates}.
\newblock Technical Report G-2019-30, Les cahiers du GERAD, 2019.

\bibitem{AuHa2017}
C.~Audet and W.~Hare.
\newblock {\em {Derivative-Free and Blackbox Optimization}}.
\newblock Springer Series in Operations Research and Financial Engineering.
  Springer International Publishing, Berlin, 2017.

\bibitem{AudIhaLedTrib2016}
C.~Audet, A.~Ihaddadene, S.~{Le~Digabel}, and C.~Tribes.
\newblock {Robust optimization of noisy blackbox problems using the Mesh
  Adaptive Direct Search algorithm}.
\newblock {\em Optimization Letters}, 12(4):675--689, 2018.

\bibitem{snowpac}
F.~Augustin and Y.M. Marzouk.
\newblock A trust-region method for derivative-free nonlinear constrained
  stochastic optimization.
\newblock {\em arXiv preprint arXiv:1703.04156}, 2017.

\bibitem{BeHaLu17}
V.~Beiranvand, W.~Hare, and Y.~Lucet.
\newblock Best practices for comparing optimization algorithms.
\newblock {\em Optimization and Engineering}, 06 2017.

\bibitem{VME2016:BrIoGrLoRe}
T.~Browne, B.~Iooss, L.~Le Gratiet, J.~Lonchampt, and E.~Remy.
\newblock {Stochastic simulators based optimization by Gaussian process
  metamodels - Application to maintenance investments planning issues}.
\newblock {\em {Quality and Reliability Engineering International}},
  32(6):2067--2080, 2016.

\bibitem{Ch2012}
K.H. Chang.
\newblock Stochastic nelder-mead simplex method - a new globally convergent
  direct search method for simulation optimization.
\newblock {\em European Journal of Operational Research}, 220(3):684--694,
  2012.

\bibitem{ChKe2016}
X.~Chen and C.T. Kelley.
\newblock {Optimization with hidden constraints and embedded Monte Carlo
  computations}.
\newblock {\em Optimization and Engineering}, 17(1):157--175, 2016.

\bibitem{Clar83a}
F.H. Clarke.
\newblock {\em Optimization and Nonsmooth Analysis}.
\newblock John Wiley \& Sons, New York, 1983.
\newblock Reissued in 1990 by SIAM Publications, Philadelphia, as Vol.~5 in the
  series Classics in Applied Mathematics.

\bibitem{CoScVibook}
A.R. Conn, K.~Scheinberg, and L.N. Vicente.
\newblock {\em {Introduction to Derivative-Free Optimization}}.
\newblock MOS-SIAM Series on Optimization. SIAM, Philadelphia, 2009.

\bibitem{DoMo02}
E.D. Dolan and J.J. Mor\'e.
\newblock Benchmarking optimization software with performance profiles.
\newblock {\em Mathematical Programming}, 91(2):201--213, 2002.

\bibitem{FrPa2015}
E.~Frandi and A.~Papini.
\newblock Improving direct search algorithms by multilevel optimization
  techniques.
\newblock {\em Optimization Methods and Software}, 30:1--18, 04 2015.

\bibitem{PHOENICS}
F.~Häse, L.M. Roch, C.~Kreisbeck, and A.~Aspuru-Guzik.
\newblock Phoenics: A universal deep bayesian optimizer.
\newblock {\em arXiv preprint arXiv:1801.01469v1}, 01 2018.

\bibitem{JoScWe97a}
D.R. Jones, M.~Schonlau, and W.J. Welch.
\newblock A data analytic approach to {B}ayesian global optimization.
\newblock In {\em Proceedings of the ASA}, 1997.

\bibitem{Le09b}
S.~{Le~Digabel}.
\newblock {Algorithm 909: NOMAD: Nonlinear Optimization with the MADS
  algorithm}.
\newblock {\em {ACM} Transactions on Mathematical Software}, 37(4):44:1--44:15,
  2011.

\bibitem{VME2017:Lonchampt}
J.~Lonchampt.
\newblock {VME} a tool for probabilistic models valuation in engineering asset
  management.
\newblock In {\em Risk, Reliability and Safety: Innovating Theory and Practice
  -- Walls, Revie \& Bedford (Eds)}, pages 1158--1164, London, 2017. CRC Press,
  Taylor \& Francis Group.
\newblock Proceedings of ESREL 2016 (Glasgow, Scotland, 25-29 September 2016).

\bibitem{MoWi2009}
J.J. Mor\'e and S.M. Wild.
\newblock Benchmarking derivative-free optimization algorithms.
\newblock {\em SIAM Journal on Optimization}, 20(1):172--191, 2009.

\bibitem{NeMe65a}
J.A. Nelder and R.~Mead.
\newblock A simplex method for function minimization.
\newblock {\em The Computer Journal}, 7(4):308--313, 1965.

\bibitem{DOGS2019}
T.~Bewley P.~Beyhaghi, R.~Alimo.
\newblock A derivative-free optimization algorithm for the efficient
  minimization of functions obtained via statistical averaging.
\newblock {\em arXiv preprint arXiv:1910.12393}, 2019.

\bibitem{PaSc18}
C.~Paquette and K.~Scheinberg.
\newblock {A Stochastic Line Search Method with Convergence Rate Analysis}.
\newblock {\em arXiv e-prints}, page arXiv:1807.07994, Jul 2018.

\bibitem{Picheny2012}
V.~Picheny, D.~Ginsbourger, Y.~Richet, and G.~Caplin.
\newblock {Quantile-based optimization of Noisy Computer Experiments with
  Tunable Precision}.
\newblock working paper or preprint, March 2012.

\bibitem{PoWe06a}
E.~Polak and M.~Wetter.
\newblock Precision control for generalized pattern search algorithms with
  adaptive precision function evaluations.
\newblock {\em SIAM Journal on Optimization}, 16(3):650--669, 2006.

\bibitem{Rivier2019}
M.~Rivier and P.~M. Congedo.
\newblock Surrogate-assisted bounding-box approach for optimization problems
  with tunable objectives fidelity.
\newblock {\em Journal of Global Optimization}, Aug 2019.

\bibitem{Sasena02}
M.J. Sasena.
\newblock {\em {Flexibility and Efficiency Enhancements for Constrained Global
  Design Optimization with Kriging Approximations}}.
\newblock PhD thesis, University of Michigan, 2002.

\bibitem{ASTRODF}
S.~{Shashaani}, F.~{Hashemi}, and R.~{Pasupathy}.
\newblock {ASTRO-DF: A Class of Adaptive Sampling Trust-Region Algorithms for
  Derivative-Free Stochastic Optimization}.
\newblock {\em arXiv e-prints}, page arXiv:1610.06506, Oct 2016.

\bibitem{Torc97a}
V.~Torczon.
\newblock On the convergence of pattern search algorithms.
\newblock {\em SIAM Journal on Optimization}, 7(1):1--25, 1997.

\bibitem{ChKeFeZa18}
C.~Xiaojun, C.T. Kelley, F.~Xu, and Z.~Zhang.
\newblock A smoothing direct search method for monte carlo-based bound
  constrained composite nonsmooth optimization.
\newblock {\em SIAM Journal on Scientific Computing}, 40:A2174--A2199, 01 2018.

\end{thebibliography}
\end{document}